\newcommand{\mylabel}[2]{#2\def\@currentlabel{#2}\label{#1}}
\newcommand{\ols}[1]{\mskip.5\thinmuskip\overline{\mskip-.5\thinmuskip {#1} \mskip-.5\thinmuskip}\mskip.5\thinmuskip} 
\newcommand{\olsi}[1]{\,\overline{\!{#1}}} 
\newcommand\closure[1]{
  \tctestifnum{\count@stringtoks{#1}>1} 
  {\ols{#1}} 
  {\olsi{#1}} 
}
\long\def\count@stringtoks#1{\tc@earg\count@toks{\string#1}}
\long\def\count@toks#1{\the\numexpr-1\count@@toks#1.\tc@endcnt}
\long\def\count@@toks#1#2\tc@endcnt{+1\tc@ifempty{#2}{\relax}{\count@@toks#2\tc@endcnt}}
\def\tc@ifempty#1{\tc@testxifx{\expandafter\relax\detokenize{#1}\relax}}
\long\def\tc@earg#1#2{\expandafter#1\expandafter{#2}}
\long\def\tctestifnum#1{\tctestifcon{\ifnum#1\relax}}
\long\def\tctestifcon#1{#1\expandafter\tc@exfirst\else\expandafter\tc@exsecond\fi}
\long\def\tc@testxifx{\tc@earg\tctestifx}
\long\def\tctestifx#1{\tctestifcon{\ifx#1}}
\long\def\tc@exfirst#1#2{#1}
\long\def\tc@exsecond#1#2{#2}
\newcommand{\qand}{\quad\text{and}\quad}
\def\inte{\operatorname{int}}
\def\dist{\operatorname{dist}}
\def\leb{m}
\def\emb{\operatorname{Emb}}
\newtheorem{maintheorem}{Theorem}
\newcommand{\cmt}{\begin{maintheorem}}
\newcommand{\fmt}{\end{maintheorem}}
\newtheorem{maincorollary}[maintheorem]{Corollary}
\newcommand{\cmc}{\begin{maincorollary}}
\newcommand{\fmc}{\end{maincorollary}}
\newtheorem{lemma}{Lemma}[section]
\newtheorem{theorem}[lemma]{Theorem}
\newtheorem{corollary}[lemma]{Corollary}
\newtheorem{proposition}[lemma]{Proposition}
\theoremstyle{remark}
\newtheorem{remark}[lemma]{Remark}
\DeclareMathOperator{\Leb}{Leb}
\DeclarePairedDelimiter\abs{\lvert}{\rvert}%
\let\oldabs\abs
\def\abs{\@ifstar{\oldabs}{\oldabs*}}
\thanks{The authors were partially supported by     CMUP (UID/MAT/00144/2019),  PD/BD/128062/2016 and PTDC/MAT-PUR/28177/2017, which are funded by FCT (Portugal) with national (MEC) and European structural funds through the program  FEDER, under the partnership agreement PT2020. 
}
\keywords{Piecewise smooth maps, metric entropy, entropy formula}
\subjclass[2020]{37A05,  37A35, 37C05, 37C83}
\begin{document}
\title[Entropy formula for systems with inducing schemes]{Entropy formula for  systems\\ with inducing schemes}
\date{}

\author[J. F. Alves]{Jos\'{e} F. Alves}
\address{Jos\'{e} F. Alves\\ Centro de Matem\'{a}tica da Universidade do Porto\\ Rua do Campo Alegre 687\\ 4169-007 Porto\\ Portugal}
\email{jfalves@fc.up.pt} \urladdr{http://www.fc.up.pt/cmup/jfalves}

\author[D. Mesquita]{David Mesquita}
\address{David Mesquita\\ Centro de Matem\'{a}tica da Universidade do Porto\\ Rua do Campo Alegre 687\\ 4169-007 Porto\\ Portugal}

\maketitle




\begin{abstract}
We obtain entropy formulas for  SRB measures with finite entropy given by inducing schemes. In the first part of the work, we obtain Pesin entropy formula for the class of noninvertible systems whose SRB measures are given by   Gibbs-Markov induced maps. In the second part, we obtain Pesin entropy formula   for invertible maps whose SRB measures given by Young sets,  taking into account a classical compression technique along the stable direction that allows a reduction of the return map associated with a Young set to a Gibbs-Markov map. In both cases, we give applications of  our main results  to several  classes of dynamical systems with singular sets, where the classical results by Ruelle and Pesin  cannot be applied. We also present examples  of systems with SRB measures given by inducing schemes  for which Ruelle inequality  does not hold. 
 
\end{abstract}
\maketitle

\addtocontents{toc}{\protect\setcounter{tocdepth}{0}}
\section*{Introduction}

\addtocontents{toc}{\protect\setcounter{tocdepth}{2}}

The concept of \emph{entropy} was   introduced in  Dynamical Systems by Kolmogorov   in \cite{K58}, based on an analogous notion in  Information Theory proposed by Shannon in~\cite{S48}. In broad terms, entropy  measures the exponential rate at which the dynamical complexity increases as the system is iterated in time, and thus relates to the unpredictability of the system. A very natural first issue arising from the definition is the estimation of entropy  for concrete systems -- a question already treated by Kolmogorov and Sinai themselves in their celebrated theorem concerning generating partitions --,  finding one of its first general answers in the \emph{Rohlin formula}, which expresses the entropy in terms of the integral of the Jacobian with respect to an invariant measure, and in   subsequent works \cite{AR62,BC92,LW77}.

 For smooth diffeomorphisms of a Riemannian manifold, Ruelle established in \cite{R78} that the entropy of an  invariant probability measure is always bounded by the integral of the sum of the positive Lyapunov exponents (counted with multiplicity) with respect to that measure. Margulis first established  this inequality for diffeomorphisms preserving a smooth measure.  The reverse inequality was obtained  by Pesin in \cite{P77},  for  \emph{Sinai-Ruelle-Bowen (SRB) measures}, i.e. invariant probability measures whose conditional measures  are  absolutely continuous with respect to the Lebesgue measure along local unstable manifolds. A simpler proof  of Pesin inequality was given  by  Ma\~n\'e  in \cite{M81a}. A characterisation of   the entropy formula in terms of the SRB property was  given by Ledrappier and Young in \cite{LY85};  see also~\cite{Y03}. Natural versions for noninvertible smooth maps (endomorphisms) have been drawn by Liu, Qian et al  in \cite{L98,LQ95,QXZ09}. 
 
 There is currently a vast literature addressing Ruelle inequality and Pesin entropy formula for smooth dynamical systems.  Extensions of these results were obtained by Ledrappier and Strelcyn in~\cite{LS82} for the class of maps having points with infinite derivative introduced by Katok and Strelcyn  in~\cite{KSL86}, inspired on billiard maps. Albeit, to the best of our knowledge, besides  \cite{LS82} and the recent work \cite{AP21}, not much is known on the existence of entropy formulas that can be applied   to piecewise smooth maps with singular sets in general, especially in dimension greater than one. For one-dimensional dynamical systems, see e.g. \cite{BJ12,DKU90, H91, K89,L81,L20,R64}. 
The utility of the entropy formula can be detected, for example, in the works~\cite{ACF10,AOT06,AP21,L19}, where it is used in an important way to prove the continuity of entropy in certain families of transformations.

Young has shown  that the statistical properties -- in particular, the existence of SRB measures -- of some nonuniformly hyperbolic dynamical systems can be deduced by means of inducing schemes. This means that we chose a region of the phase space and define a new dynamical system in that region with some good analytical/geometric properties, considering an appropriate return transformation (not necessarily the first one) in subdomains of the considered region. An abstract framework for this strategy was developed quite successfully in \cite{Y98} for systems with contractive directions and in \cite{Y99} for systems without contractive directions.  In recent years, the  results  by  Young  have been applied by many authors to various classes of dynamical systems, comprising, in particular, systems with singularities (including billiards). A natural question  is:
\begin{quote}\em To what extent the existence of an inducing scheme, by itself, guarantees  an entropy formula for the SRB measure given by that inducing scheme?
\end{quote}
 In our main results, we show that the answer is affirmative  whenever the entropy of the SRB measure is finite. We  also characterise the finiteness  of the entropy   in terms of analytical properties of   the inducing scheme, and give examples of SRB measures given by inducing schemes  (necessarily with infinite entropy)  for which Ruelle inequality does not hold. 

Comparing our results with those   by Katok, Ledrappier and Strelcyn in the aforementioned works \cite{KSL86,LS82}, the  advantage of our approach  consists essentially of two main points:  \emph{i)} we do not have  any assumptions    on the SRB measure, only analytical and geometric properties of the induced scheme, unlike  (1.1) and (1.2) in~\cite{KSL86} or   conditions   2.1 and  2.2 in~\cite{LS82};  
 \emph{ii)}~in recent years,   inducing schemes have become a widely used tool, the existence of which has been demonstrated  for many classes  of dynamical systems. A vast list of examples of maps with singular sets are given at Sections~\ref{se.app1} and~\ref{se.app2}, illustrating applications of our main results. In terms of applications of the results in \cite{KSL86, LS82} (only to billiards), the fact that the density of the SRB measure of a billiard map has a simple and well-known  expression  is used in an important way; see 
  \cite[Part V]{KSL86}.
\addtocontents{toc}{\protect\setcounter{tocdepth}{0}}

\subsubsection*{Overview.}
 \addtocontents{toc}{\protect\setcounter{tocdepth}{2}}
This paper is organised as follows. In Part~\ref{part1}, we obtain the entropy formula  for maps without contracting directions whose SRB measures are given by   Gibbs-Markov induced  maps. These concepts, as well as the main results of this part, are presented    in Section~\ref{Markovstruc}. Proofs of the   results are provided  in the subsequent sections. In Section~\ref{se.systemno}, we give an example of a  piecewise smooth map with infinite entropy   for which the integral of the Jacobian is finite. 
This example 
  illustrates the failure of two classical results under  slightly  more general assumptions  than those usually required, even for the somewhat regular SRB measures given by   Gibbs-Markov induced maps:
\begin{itemize}
\item[(1)] 
\em Ruelle inequality does not hold  if   the differentiability of the transformation is assumed only  almost everywhere;
\item[(2)]  
  the conclusion of Shannon-McMillan-Breiman Theorem is no longer valid
 if    the generating partition is not assumed with finite entropy.
\end{itemize}
This is explained in detail in Remarks~\ref{re.smbt2} and~\ref{re.smbt}.
In Section~\ref{se.app1},
we apply the main results of the first part  of this work to some classes of piecewise smooth maps with SRB measures given by Gibbs-Markov induced  maps.
 In Part~\ref{part2}, we obtain the entropy formula  for piecewise smooth diffeomorphisms  with  contracting directions whose SRB measures are given by  Young sets. These concepts, as well as the main results of this part, are provided   in Section~\ref{section.GMY}. The proofs of the   results are given in the subsequent sections. In Section~\ref{se.systemno2} we give an example of  system with infinite entropy   for which Ruelle inequality does not hold. In Section~\ref{se.app2},
we apply our main results of the second part  to some classes of piecewise smooth diffeomorphisms with SRB measures given by Young sets.

  \tableofcontents

  \addtocontents{toc}{\protect\setcounter{tocdepth}{0}}

 \subsubsection*{Acknowledgements} The authors  are grateful to Jérôme Buzzi, Maria Carvalho, Mark Demers, Stefano Luzzatto  and Marcelo Viana for    valuable discussions on these topics and for providing   some useful references.
 
 \addtocontents{toc}{\protect\setcounter{tocdepth}{2}}


\part{Systems with expanding structures}\label{part1}
Let   $M$ be a  Riemannian manifold and let $m$ be the Lebesgue measure  on the Borel sets of~$M$. 
Assume that $f:M\to M$ is a   piecewise $C^{1+\eta}$ map, meaning that there are  at most countably many  pairwise disjoint open regions $M_1,  M_2,\dots$ 
such that
  $\bigcup_{i=1}^\infty\overline{M}_i=M$ and   $f\vert_{\cup_{i=1}^\infty M_i}$ is a $C^{1+\eta}$ map.
  We will refer to
   $$S= M\setminus \cup_{i=1}^\infty M_i$$
   as the \emph{singular  set} of $f$ and assume that $m(S)=0$. 
Typically, $S$ is a set of  discontinuity  points or a set of points where the derivative of $f$ does not exist (possibly being infinite).

\section{Gibbs-Markov induced maps} \label{Markovstruc}

Assume that $\Delta_0\subset M\setminus S$ is a Borel set with $m(\Delta_0)>0$. For simplicity, the restriction of $m$ to $\Delta_0$ will still be denoted by $m$. 
Consider
  a countable $m$ mod $0$ partition $\mathcal{P}$ of $\Delta_0$ into open sets of $\Delta_0$ and 
  a function   $R:\Delta_0 \rightarrow \mathbb{N}$   constant on each  element of $\mathcal{P}$ such that, for all $\omega \in \mathcal{P}$,
\begin{itemize} 
\item $f^j(\omega)\cap S=\emptyset$, for all  $1\le j\le R(\omega)$;
\item $f^{R(\omega)}(\omega) \subset  \Delta_0 $. \label{induced map}
\end{itemize}
 We associate to these objects a    map $F:\Delta_0\to\Delta_0$, setting 
$$
F|_{\omega}=f^{R(\omega)}|_\omega,\quad\text{for each $\omega \in \mathcal{P}$}.
$$
 The map  $F$ will be frequently  denoted  by $f^R$ and called an \textit{induced map} for $f$; the function~$R$ will be called the \textit{recurrence time} associated with the induced map.   
   We say that $F$ is  a
 \emph{Gibbs-Markov}\index{Gibbs-Markov map!weak} map  
 if  
   conditions    \ref{G1}-\ref{G5} below  are satisfied.

   \begin{itemize}
       \item[\mylabel{G1}{(G$_1$)}]        \emph{Markov}: \index{Markov} $F$ maps each $ \omega\in  \mathcal P  $ bijectively to    $\Delta_0$ ($m$ mod 0).
     \item[\mylabel{G2}{(G$_2$)}]       \emph{Nonsingular}: \index{nonsingular}     there  exists  a measurable  function  $J_F:\Delta_0\to (0,\infty)$   such that, for every measurable set  $A\subset \omega\in\mathcal P$,
 $$m(F(A))=\int_A J_F dm.$$
 \end{itemize}
 The function $J_F$ is  called the  \emph{Jacobian}\index{Jacobian}  of $F$  (with respect to $m$). The next two properties are related to the dynamical partitions generated by $\mathcal P$.
 Set for each  $n\ge1$
  \begin{equation}\label{eq.novacoisa}
\mathcal P_n=\bigvee_{i=0}^{n-1}F^{-i}\mathcal P=
 \left\{\omega_0\cap F^{-1}(\omega_1)\cap\cdots\cap F^{-(n-1)}(\omega_{n-1})\, \colon  \omega_0,\dots,\omega_{n-1}\in\mathcal P\right\}
\end{equation}
and 
  \begin{equation*}\label{eq.novacoisa2}
\mathcal P_\infty=\bigvee_{i=0}^{\infty}F^{-i}\mathcal P=
 \left\{\omega_0\cap F^{-1}(\omega_1)\cap F^{-2}(\omega_2)\cap\cdots \colon \omega_n\in\mathcal P\text{ for all $n\ge0$}\right\}.
\end{equation*}
 \begin{itemize}
        \item[\mylabel{G3}{(G$_3$)}]  \emph{Generating}: \index{separability} 
        the $\sigma$-algebra generated by    $\bigcup_{n=1}^{\infty} \mathcal P_n$ coincides with $\mathcal A$ ($m$ mod 0).  
 \end{itemize}
  \begin{itemize}
        \item[\mylabel{G4}{(G$_4$)}]  \emph{Separating}: \index{separability} 
       $ \mathcal P_\infty $  is the partition into single points of $\Delta_0$  ($m$ mod 0).  
 \end{itemize}
 It follows from~\ref{G4} that
   the \emph{separation time}\index{separation!time}
       \begin{equation}\label{eq.separa}
 s(x,y)=\min\big\{  n\ge 0 \colon \text{$F^n(x)$ and $F^n(y)$ lie in distinct elements of $\mathcal P$}\big\}
\end{equation}
is well defined and finite for  distinct points 
  $x,y$     in a  full $m$ measure subset  of $\Delta_0$. 
For definiteness, set the separation time equal to zero for all  other points. 
      \begin{itemize}
  \item[\mylabel{G5}{(G$_5$)}]      \emph{Gibbs}: \index{Gibbs} 
   there are $C>0$  and $0<\beta<1$  such that,   for all $x,y\in \omega\in\mathcal P$,    
    $$\log\frac{J_F(x)}{J_F(y)}\le  C\beta^{s(F(x),F(y))}.$$
\end{itemize}

We say that the induced map $f^R$ has \emph{integrable  recurrence times} if   $R \in L^1(m)$.
The next result is standard for transformations admitting  Gibbs-Markov induced maps; see e.g.~\cite[Theorem 3.13]{A20} and~\cite[Corollary 3.21]{A20}. In the present  context, by an \emph{SRB measure}\index{measure!SRB}  we mean  an invariant probability measure that is absolutely continuous with respect to the Lebesgue measure~$m$.

\begin{theorem}\label{th.SRB0} 
If $f^R:\Delta_0\to \Delta_0$ is a Gibbs-Markov induced map  for $f$ with integrable recurrence times, then
\begin{enumerate}
\item $f^R$ has a  unique ergodic  SRB measure $\nu_0$;
\item $f$ has a unique ergodic SRB measure $\mu$ with $\mu(\Delta_0)>0$,  which is given by  
 $$ 
{\mu}=\frac1{\sum_{j= 0}^\infty\nu_0(\{  R > j\})} \sum_{j=0}^\infty f^j_{*}(\nu_0|\{  R > j\}).
$$ 
\end{enumerate}
\end{theorem}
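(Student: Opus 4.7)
The plan is to handle the two parts separately: first construct $\nu_0$ for the induced map $F=f^R$ using bounded distortion and a Perron--Frobenius argument, then saturate $\nu_0$ along $f$-orbits to produce $\mu$.

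For part (1), the first step is to deduce from \ref{G1} and \ref{G5} a uniform distortion bound for iterates of $F$: for every $n\ge 1$, every element $\omega^{(n)}\in\mathcal{P}_n$, and every $x,y\in\omega^{(n)}$,
$$
\log\frac{J_{F^n}(x)}{J_{F^n}(y)}
=\sum_{k=0}^{n-1}\log\frac{J_F(F^k x)}{J_F(F^k y)}
\le \sum_{k=0}^{n-1} C\beta^{s(F^{k+1}x,F^{k+1}y)}\le \frac{C}{1-\beta}.
$$
This bound implies, via \ref{G1} and \ref{G2}, that the densities $\varphi_n=d(F^n_* m)/dm$ are uniformly bounded in $L^\infty(m)$. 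Cesaro averages of these densities then converge, along a subsequence, weakly in $L^1(m)$ to a density $\rho_0$ of an $F$-invariant absolutely continuous probability measure $\nu_0$. Ergodicity and uniqueness follow from a Hopf-type argument: by \ref{G3}--\ref{G4} the iterated partitions $\mathcal{P}_n$ generate the Borel $\sigma$-algebra, and bounded distortion makes $m$ restricted to any $\omega^{(n)}\in\mathcal{P}_n$ comparable (up to a multiplicative constant independent of $n$) to the pull-back of $m|_{\Delta_0}$ by $F^n|_{\omega^{(n)}}$, so any $F$-invariant set of positive $m$-measure is forced to have full measure in $\Delta_0$.

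For part (2), the normalising constant
$$
N=\sum_{j\ge 0}\nu_0(\{R>j\})=\int R\, d\nu_0
$$
is finite since $\rho_0=d\nu_0/dm\in L^\infty(m)$ by distortion and $R\in L^1(m)$ by hypothesis. Define $\mu$ by the formula in the statement. Invariance $f_*\mu=\mu$ follows from direct bookkeeping: for each $j\ge 0$, split $\{R>j\}=\{R>j+1\}\sqcup\{R=j+1\}$; pushing forward by $f$ shifts the first piece from index $j$ to index $j+1$, while the second pieces assemble (as $j$ varies) into $F_*\nu_0=\nu_0$, which is precisely the $j=0$ term. Absolute continuity of $\mu$ with respect to $m$ holds because, for each $j\ge 0$, the measure $f^j_*(\nu_0|\{R>j\})$ is supported in $f^j(\{R>j\})\subset M\setminus S$ where $f$ is $C^{1+\eta}$ and hence nonsingular; and $\nu_0\ll m$. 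Ergodicity transfers from $\nu_0$ to $\mu$ via the tower picture: any $f$-invariant set of positive $\mu$-measure intersects $\Delta_0$ in an $F$-invariant set of positive $\nu_0$-measure, which by part (1) has full $\nu_0$-measure in $\Delta_0$, hence full $\mu$-measure in $M$.

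The main obstacle will be uniqueness of the ergodic SRB measure with $\mu(\Delta_0)>0$. The idea is that, given any competing ergodic SRB measure $\tilde\mu$ with $\tilde\mu(\Delta_0)>0$, a Kac-type formula together with the Markov structure \ref{G1} produces from $\tilde\mu|_{\Delta_0}$ (suitably renormalised) an $F$-invariant SRB measure for $f^R$; part (1) then forces it to equal $\nu_0$, and reconstructing the tower forces $\tilde\mu=\mu$. The delicate point lies in verifying that this reconstruction is well defined and that no mass of $\tilde\mu$ is lost between consecutive returns to $\Delta_0$, which is where the integrability of $R$ and the assumption $\tilde\mu(\Delta_0)>0$ are essential.
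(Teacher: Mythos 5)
The paper does not actually give a proof of this theorem: it declares the result ``standard for transformations admitting Gibbs-Markov induced maps'' and refers to Theorem 3.13 and Corollary 3.21 of Alves's book \cite{A20}. So there is no in-text proof for you to be matching. Judged on its own, your part (1) and the invariance and absolute continuity portions of part (2) are correct and follow the standard route (bounded distortion for $F^n$, $L^\infty$ bounds on the push-forward densities, weak limits of Cesaro averages, Hopf/Lebesgue density argument for ergodicity; direct bookkeeping for $f_*\mu=\mu$). One small slip: after establishing $\varphi_n\le e^{C/(1-\beta)}$ you should also record the matching lower bound $\varphi_n\ge e^{-C/(1-\beta)}$ (it comes out of the same distortion estimate together with the Markov property), so that $d\nu_0/dm$ is bounded away from $0$ as well as from $\infty$. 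You will need this two-sided bound below.

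The genuine gap is in your uniqueness argument. You propose to apply a Kac-type formula to $\tilde\mu|_{\Delta_0}$ and conclude that a suitable renormalisation is $F$-invariant; but Kac's lemma produces an invariant measure for the \emph{first-return} map of $f$ to $\Delta_0$, and $f^R$ is in general not the first-return map (the paper only assumes $f^{R(\omega)}(\omega)\subset\Delta_0$, not that $R$ is the first return time). Indeed, even for the constructed measure $\mu$ it is \emph{not} true in general that $\mu|_{\Delta_0}$, renormalised, equals $\nu_0$: the terms $f^j_*(\nu_0|\{R>j\})$ with $j\ge1$ may give positive mass to $\Delta_0$. So your step ``produces from $\tilde\mu|_{\Delta_0}$ an $F$-invariant SRB measure'' is not available without a nontrivial lifting theorem, and you (rightly) flag it as unresolved. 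The standard way around this is much simpler and bypasses the lifting problem entirely: since $d\nu_0/dm$ is bounded above and below on $\Delta_0$, the $j=0$ term in the defining sum gives $\mu(A)\ge N^{-1}\nu_0(A)$ for every $A\subset\Delta_0$, so $\mu$ charges every positive-$m$-measure subset of $\Delta_0$. If $\tilde\mu$ is another ergodic SRB measure with $\tilde\mu(\Delta_0)>0$, then since $\tilde\mu\ll m$ there is a subset of $\Delta_0$ of positive $m$-measure (hence positive $\mu$-measure) carrying positive $\tilde\mu$-mass, which rules out $\mu\perp\tilde\mu$; two ergodic invariant probabilities that are not mutually singular coincide. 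This replaces the Kac/tower-reconstruction idea and closes the argument.
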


In these circumstances, we say that the SRB measure $\mu$ is  \emph{given by the Gibbs-Markov induced map~$f^R$}. 
Regarding the integral  the entropy formula, in the present context we are naturally lead to consider the case where all Lyapunov exponents are positive and their sum   coincides with the Jacobian $|\det Df|$ of the map $f$ with respect to the reference (Lebesgue) measure~$m$. As shown in \cite[Proposition 2.5]{LS82}, this happens under very general conditions. In our first main result, we establish an entropy formula for an SRB measure~$\mu$  with finite entropy~$h_\mu(f)$ given by a Gibbs-Markov induced map; 
we also characterise  those SRB measures  with finite  entropy.


%


\begin{maintheorem} \label{th.formendo}
Let $f:M\to M$ be a   piecewise $C^{1+\eta}$ map with 
   an ergodic SRB measure $\mu$ given by a Gibbs-Markov induced map $f^R$. Then,
 \begin{enumerate}
\item  if $h_\mu(f)<\infty$, then
$$
h_\mu(f)=\int_M   \log |\det Df| \,  d\mu;
$$
\item $h_\mu(f)<\infty$ if, and only if,
  $$\int_{\Delta_0} R\,\log |\det D{f^R}| \,dm<\infty . 
$$
\end{enumerate}

\end{maintheorem}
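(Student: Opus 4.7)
The plan is to reduce the entropy formula for $(f,\mu)$ to a Rohlin-type identity for the Gibbs-Markov induced map $F = f^R$, transported back to $f$ through the Young tower of the inducing scheme. Set $Z = \int R\,d\nu_0 = 1/\mu(\Delta_0)$ and form the tower $\Delta = \{(x,j) : x \in \Delta_0,\ 0 \le j < R(x)\}$ with tower map $\tilde F$ and invariant probability $\tilde\mu$ whose $j$-th level is identified with $\nu_0\vert_{\{R > j\}}/Z$. The projection $\pi(x,j) = f^j(x)$ is a measurable semi-conjugacy with $\pi_*\tilde\mu = \mu$; since by construction the pieces $f^j(\omega)$ for $0 \le j < R(\omega)$ avoid the singular set $S$, one checks that $\pi$ is essentially injective, so $h_\mu(f) = h_{\tilde\mu}(\tilde F)$. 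Abramov's formula applied at the base (where the first-return map of $\tilde F$ is $F$ itself) then gives $h_{\tilde\mu}(\tilde F) = h_{\nu_0}(F)/Z$.

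For the induced Gibbs-Markov map $F$, conditions (G$_3$)--(G$_4$) make $\mathcal P$ a generating partition, and the bounded-distortion estimate (G$_5$), through a standard transfer-operator argument, yields an invariant density $\rho_0 = d\nu_0/dm$ bounded away from $0$ and $\infty$ on $\Delta_0$. Since $\log\rho_0\in L^{\infty}(\nu_0)$, Rohlin's formula applied with $J_{F,\nu_0}(x) = |\det DF(x)|\rho_0(x)/\rho_0(Fx)$ and the $F$-invariance of $\nu_0$ killing the density correction yields
\begin{equation*}
h_{\nu_0}(F) = \int_{\Delta_0}\log|\det Df^R|\,d\nu_0.
\end{equation*}
On the other hand, inserting the explicit formula for $\mu$ from Theorem~\ref{th.SRB0} and using the chain rule $\log|\det Df^R(x)| = \sum_{j=0}^{R(x)-1}\log|\det Df(f^j x)|$ gives
\begin{equation*}
\int_M \log|\det Df|\,d\mu = \frac{1}{Z}\sum_{j\ge 0}\int_{\{R>j\}}\log|\det Df|\circ f^j\,d\nu_0 = \frac{1}{Z}\int_{\Delta_0}\log|\det Df^R|\,d\nu_0.
\end{equation*}
Stringing the three identities together produces statement (1).

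For statement (2), the relevant object is the partition $\tilde{\mathcal Q} = \{\omega\times\{j\} : \omega\in\mathcal P,\ 0\le j<R(\omega)\}$ on $\Delta$. A direct computation from $\tilde\mu(\omega\times\{j\})=\nu_0(\omega)/Z$ gives
\begin{equation*}
H_{\tilde\mu}(\tilde{\mathcal Q}) = \log Z + \frac{1}{Z}\sum_{\omega\in\mathcal P} R(\omega)\,\nu_0(\omega)\,(-\log\nu_0(\omega)),
\end{equation*}
and the Markov bijection $F:\omega\to\Delta_0$ combined with bounded distortion yields $-\log\nu_0(\omega)\asymp\log|\det Df^R|$ on each $\omega$; hence $H_{\tilde\mu}(\tilde{\mathcal Q})<\infty$ if and only if $\int_{\Delta_0}R\log|\det Df^R|\,dm<\infty$. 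The easy direction of (2) is now immediate, since this integrability forces $h_{\tilde\mu}(\tilde F)\le H_{\tilde\mu}(\tilde{\mathcal Q})<\infty$, and hence $h_\mu(f)<\infty$ via the tower projection. I expect the main obstacle to be the converse, as the naive route through Abramov together with $h_{\nu_0}(F)=H_{\nu_0}(\mathcal P)$ only yields the strictly weaker condition $\int\log|\det Df^R|\,dm<\infty$; the $R$-weighted bound will presumably require exploiting the tower geometry more directly, lifting $\tilde{\mathcal Q}$ to an actual generator for $f$ on $M$ and combining Ruelle's inequality with the chain-rule identity above to recover the finiteness of $H_{\tilde\mu}(\tilde{\mathcal Q})$ from that of $h_\mu(f)$.
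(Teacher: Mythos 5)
Your overall structure for item (1) — lift to the tower, transfer entropy, then compute the entropy of the induced/tower system via a Jacobian identity and the chain rule — parallels the paper's, but there are two places where your argument is either incomplete or leans on the wrong tool, and for item (2) you explicitly acknowledge a gap whose proposed repair would not work.

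First, the claim that the tower projection $\pi(x,j)=f^j(x)$ is ``essentially injective'' is false in general: distinct levels of the tower can project to the same point of $M$, and there is no reason for the preimage of a $\mu$-generic point to be a single tower point. What is true, and what the paper uses, is that $\pi$ has at most countably many preimages (each level is identified with a subset of $\Delta_0$), so the tower is an extension with countably many fibers and Buzzi's \cite[Proposition 2.8]{B99b} gives $h_\mu(f)=h_{\tilde\mu}(\tilde F)$. Your appeal to Abramov at the base is fine and is a legitimate alternative to the paper's route (which instead proves the tower entropy formula directly via a Volume Lemma and Shannon--McMillan--Breiman), but the Rohlin step is not as automatic as you state: Rohlin's formula for a countable generating partition needs finite entropy of the generator, and you must first deduce $H_{\nu_0}(\mathcal P)<\infty$ from $h_\mu(f)<\infty$. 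This is precisely the content the paper handles carefully by establishing, via $h_\nu(T)=\inf_n\frac1n H_\nu(\mathcal Q_n)$ and $H_\nu(\mathcal Q)\le H_\nu(\mathcal Q_n)$, that finiteness of the entropy forces finiteness of $H_\nu(\mathcal Q)$; a parallel argument would be needed on the base before invoking Rohlin.

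Second, and more seriously, the converse direction of item (2) is an acknowledged gap, and the remedy you sketch — lifting $\tilde{\mathcal Q}$ to a partition of $M$ and invoking Ruelle's inequality — cannot work. Since $\pi$ is not injective, $\tilde{\mathcal Q}$ does not project to a partition of $M$. More decisively, Ruelle's inequality is exactly what fails for this class of piecewise-smooth maps: the paper's Section~\ref{se.systemno} and Remark~\ref{re.smbt2} construct a system of precisely the present type with $\int\log|\det Df|\,d\mu<\infty$ but $h_\mu(f)=\infty$. The correct argument is elementary and avoids Ruelle entirely: since $\mathcal Q$ is generating, $h_\nu(T)=\inf_n\frac1nH_\nu(\mathcal Q_n)$; if this is finite, some $H_\nu(\mathcal Q_n)$ is finite; since $\mathcal Q_n$ refines $\mathcal Q$, $H_\nu(\mathcal Q)\le H_\nu(\mathcal Q_n)<\infty$; then the computation you already did (comparing $-\log\nu_0(\omega)$ with $\log|\det Df^R|$ via bounded distortion) gives the $R$-weighted integrability. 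In short, the missing observation is that an $\inf$ being finite forces a single term, and hence $H_\nu(\mathcal Q)$ itself, to be finite — no appeal to Ruelle is needed or available.
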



%


  The strategy for proving the first item in Theorem~\ref{th.formendo} is based on building  a tower extension $(T,\nu)$  for $(f,\mu)$ and   showing that
$$
h_\mu(f) =h_\nu(T) =\int \log J_T d\nu = \int \log |\det Df| d\mu.
$$
The first equality   is a consequence of a general result due to Buzzi  for extensions with countably many fibers. The second and third      equalities will be  deduced   in~Proposition~\ref{entrinduce}
 and Lemma~\ref{vvvvv}, respectively. The second item of Theorem~\ref{th.formendo} will be   obtained    in Lemma~\ref{le.giniteq}.

Assuming that there is  some constant $C>0$ for which $|\det Df |\le C$, 
 it follows from the chain rule   that
  $ |\det D{f^R}|\le C^R.$ We therefore have the following simple, albeit  useful, consequence of Theorem~\ref{th.formendo}.

\begin{maincorollary} \label{co.entform0}
Let $f:M\to M$ be a   piecewise $C^{1+\eta}$ map 
with an ergodic SRB measure~$\mu$ given by a Gibbs-Markov induced map $f^R$. If $| \det Df|$ is bounded   and $\int_{\Delta_0}R^2 dm<\infty$, then
$$
h_\mu(f)=\int_M   \log |\det Df| \,  d\mu<\infty.
$$
\end{maincorollary}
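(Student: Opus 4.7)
The plan is to invoke Theorem~\ref{th.formendo} directly. Assuming without loss of generality that $C \ge 1$ (replacing $C$ by $\max\{C,1\}$ if necessary), the chain rule applied to $f^R$ gives, for $m$-a.e.\ $x \in \Delta_0$,
$$
|\det Df^R(x)| = \prod_{j=0}^{R(x)-1} |\det Df(f^j(x))| \le C^{R(x)},
$$
so $\log|\det Df^R(x)| \le R(x)\log C$. Multiplying by $R$ and integrating over $\Delta_0$ yields
$$
\int_{\Delta_0} R\,\log|\det Df^R|\, dm \;\le\; (\log C) \int_{\Delta_0} R^2\, dm \;<\; \infty,
$$
by the hypothesis $R \in L^2(m)$.

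To upgrade this upper bound to genuine $L^1$-integrability of $R\log|\det Df^R|$, I would exploit the Gibbs-Markov structure to obtain a uniform lower bound on $J_F = |\det Df^R|$. By \ref{G1}, $F$ is a bijection from each $\omega \in \mathcal P$ onto $\Delta_0$, so $\int_\omega J_F\, dm = m(\Delta_0)$, and hence the average of $J_F$ on $\omega$ equals $m(\Delta_0)/m(\omega) \ge 1$ (since $\omega \subset \Delta_0$). The bounded-distortion property~\ref{G5} then constrains the oscillation of $\log J_F$ on $\omega$ by a universal constant, yielding $J_F \ge c_0 > 0$ on all of $\Delta_0$. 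Combined with $R \in L^1(m)$---a consequence of $\int R^2\,dm < \infty$ and $m(\Delta_0) < \infty$ via Cauchy--Schwarz---this shows the negative part of $R\log|\det Df^R|$ is also integrable.

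With the integrability condition of Theorem~\ref{th.formendo}(2) verified, we conclude $h_\mu(f) < \infty$, and then Theorem~\ref{th.formendo}(1) supplies the entropy formula $h_\mu(f) = \int_M \log|\det Df|\, d\mu$; the right-hand side is finite because the integrand is bounded above by $\log C$ and $\mu$ is a probability measure. There is no serious obstacle here: the corollary is essentially a quantitative repackaging of Theorem~\ref{th.formendo} under the explicit hypotheses $\|\det Df\|_\infty < \infty$ and $R \in L^2(m)$, the only delicate point being the appeal to the Gibbs property to provide the lower bound on $J_F$.
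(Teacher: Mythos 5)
Your proof is correct and follows essentially the same route as the paper's: bound $\log|\det Df^R|\le R\log C$ via the chain rule, integrate against $R$, and invoke Theorem~\ref{th.formendo}. You are somewhat more careful than the paper's two-line remark preceding the corollary in checking that the negative part of $R\log|\det Df^R|$ is integrable (your lower-bound argument on $J_F$ reproduces Lemma~\ref{le.lower} and Corollary~\ref{co.lower}), and you can simplify the observation that $R\in L^1(m)$ by just noting $R\ge 1$ forces $R\le R^2$, but none of this changes the substance.
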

 
In Section~\ref{se.systemno}, we provide   an example of a  piecewise~$C^\infty$ interval map~$f$  with an SRB  measure~$\mu$ given by an induced map for which
the   formula in the  first item of Theorem~\ref{th.formendo} is no longer valid in the infinite entropy case. 
 In Section~\ref{se.app1}, we apply   Theorem~\ref{th.formendo} and Corollary~\ref{co.entform0} to some classes of piecewise $C^{1+\eta}$ maps with nonempty singular sets.

\section{Tower system} \label{towerextension}
In this section, we follow ideas in \cite{Y98,Y99}  and introduce the tower system associated with   an induced map and recall some useful properties of this new dynamical system.  The construction of the tower   will be done  with more generality than needed in this section, for we be able to apply it in Section~\ref{sub.quotow}. 
Consider
\begin{itemize}
\item a  Gibbs-Markov map $F:\Delta_0\to\Delta_0$  on  a  space $\Delta_0$ with a  reference measure~$m$;
\item  the  countable     mod 0 partition  $\mathcal P$    of $\Delta_0$ associated with $F$;
\item  a   measurable function $R:\Delta_0 \to \mathbb N$   constant on the elements of $\mathcal P$. 
\end{itemize}
%
%
%
We associate to these objects the \textit{tower} 
$$
\Delta=\{(x,\ell) \, : \, x\in \Delta_0 \mbox{ and }  0\leq \ell < R(x) \}
$$
and the \textit{tower map} $T:\Delta \rightarrow \Delta$, given by

\[
T(x,\ell)=
                \begin{cases} 
                  (x,\ell+1),   & \mbox{if $\ell<R(x)-1$};\\
                  (F(x),0), &  \mbox{if $\ell=R(x)-1$}.
                \end{cases}
\]

\noindent
For $\ell\in\mathbb N_0$, the  \textit{$\ell^{th}$-level of the tower} is  
\begin{equation} \label{llevel}
\Delta_\ell=\{(x,\ell) \in \Delta \}.
\end{equation}
 Observe that we use  $\Delta_0$ to represent both the $0^{th}$-level  (or \textit{base}) of the tower and the   domain of $F$ upon which the tower is built, since they are naturally identified with each other.  Moreover, under this identification it is straightforward to check  that $T^{R(x)}(x,0)=F(x)$, for each $x  \in \Delta_0$. We call $T^R:\Delta_0 \rightarrow \Delta_0$ the \textit{return to the base}, which in this case   is actually a  first return map. 
 In the same spirit, the $\ell^{th}$ level of the tower $\Delta_\ell$  is also naturally identified with the set $ \{ R >\ell\} \subseteq \Delta_0$. This identification  allows us to extend the reference measure $m$ on $\Delta_0$ to a measure on $\Delta$, that we still denote by $m$. It  happens that 
 $$
m(\Delta)=\sum_{\ell\geq 0} m(\Delta_\ell)=\sum_{\ell\geq 0} m(\{R>\ell\}) 
=\int_{\Delta_0}\, R \, dm,
$$ 
and so, the integrability of $R$ with respect to $m$ (on $\Delta_0$) is a necessary and sufficient condition for the finiteness of the measure $m$ on $\Delta$. The next result provides a unique $T$-invariant probability measure which is absolutely continuous with respect to $m$; 
see e.g.~\cite[Theorem 3.24]{A20} for a proof.

\begin{theorem}\label{th.measuretower} 
 Let $T:\Delta\to\Delta$ be the tower map associated with a  Gibbs-Markov map $F$ and recurrence times $R\in L^1(m)$. 
If $\nu_0$ is the unique $F$-invariant probability measure  with $\nu_0\ll m$, then
$$ 
 {\nu}=\frac1{ \sum_{j=0}^{\infty}  \nu_0\{  R > j\}} \sum_{j=0}^{\infty} T^j_{*}( \nu_0|\{  R > j\})
$$
is the unique  $T$-invariant probability measure such that $\nu\ll m$.   
Moreover,  $\nu$ is ergodic and the density $d\nu/dm$ is bounded from above and below by positive constants.
\end{theorem}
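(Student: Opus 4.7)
The guiding idea is the natural identification of the $\ell$th tower level $\Delta_\ell$ with the subset $\{R>\ell\}\subset\Delta_0$; under this identification, $T^\ell$ sends $x\in\{R>\ell\}$ to $(x,\ell)$, so the summand $T^\ell_*(\nu_0|\{R>\ell\})$ corresponds precisely to $\nu_0|\{R>\ell\}$ viewed on $\Delta_0$. My plan is to verify, in turn, well-definedness, $T$-invariance, absolute continuity with two-sided density bounds, and finally uniqueness and ergodicity.

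First, the normalizing constant
$$
c:=\sum_{j=0}^{\infty}\nu_0\{R>j\}=\int_{\Delta_0}R\,d\nu_0
$$
is positive (since $\nu_0\{R>0\}=1$) and finite (since $R\in L^1(m)$ and $d\nu_0/dm$ is bounded above, a standard consequence of the bounded-distortion estimate derived from the Gibbs condition~\ref{G5}, which I would invoke up front). Hence $\nu$ is a probability measure, and the identification above immediately yields $d\nu/dm|_{\Delta_\ell}=c^{-1}(d\nu_0/dm)|_{\{R>\ell\}}$; since $d\nu_0/dm$ is bounded above and below by positive constants, so is $d\nu/dm$ globally on $\Delta$, which gives absolute continuity and the claimed bounds in one stroke.

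For $T$-invariance I would apply $T_*$ termwise and split $\{R>j\}=\{R>j+1\}\sqcup\{R=j+1\}$: on the first piece $T^{j+1}$ lands in $\Delta_{j+1}$ and produces the level-$(j+1)$ summand of $\nu$, while on the second piece $T^{j+1}$ coincides with $F$ and returns to the base. Summing in $j$ and reindexing by $k=j+1$, the first family reassembles into $\sum_{k\ge 1}T^k_*(\nu_0|\{R>k\})$, and the second telescopes to $F_*(\nu_0|\{R\ge 1\})=F_*\nu_0=\nu_0$ by $F$-invariance of $\nu_0$; adding this to the missing $k=0$ summand recovers $c\nu$. For uniqueness, any $T$-invariant probability $\tilde\nu\ll m$ satisfies $\tilde\nu(\Delta_0)>0$, and the normalized restriction $\tilde\nu_0:=\tilde\nu|_{\Delta_0}/\tilde\nu(\Delta_0)$ is $F$-invariant (since $T^R|_{\Delta_0}=F$) and absolutely continuous, hence equals $\nu_0$ by the hypothesis; $T$-invariance of $\tilde\nu$ then forces the stated formula on each level. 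Ergodicity of $(T,\nu)$ transfers from that of $(F,\nu_0)$ (standard for Gibbs--Markov maps): any $T$-invariant set of positive $\nu$ measure meets $\Delta_0$ in an $F$-invariant set of positive $\nu_0$ measure, so equals $\Delta_0$ mod $\nu_0$, and the level-by-level structure propagates this to $\Delta$ mod $\nu$.

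The only genuine obstacle is the two-sided control of $d\nu_0/dm$, which rests on the bounded-distortion estimate flowing from~\ref{G5}; everything else is bookkeeping via the disintegration $\nu=c^{-1}\sum_\ell T^\ell_*(\nu_0|\{R>\ell\})$ along tower levels, together with the canonical bijection between $F$-invariant and $T$-invariant absolutely continuous probabilities under the integrability assumption $R\in L^1(m)$.
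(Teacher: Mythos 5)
Your proof is correct. The paper itself does not supply a proof of this statement, deferring to \cite[Theorem 3.24]{A20}; your argument is the standard one for the tower construction, namely the level-by-level identification $T^\ell_*(\nu_0|\{R>\ell\})\cong\nu_0|\{R>\ell\}$ giving the density bounds and invariance by reindexing, together with the first-return-map correspondence (Kac) between $T$-invariant and $F$-invariant absolutely continuous probabilities for uniqueness and ergodicity.
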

Set for convenience
\begin{equation}\label{eq.rho}
\rho=\sum_{j=0}^{\infty}  \nu_0\{  R > j\}=\int R d\nu_0<\infty.
\end{equation}
The finiteness of this quantity is due to the fact we assume $R\in L^1(m)$ and $d\nu_0/dm$ is bounded from above and below by positive constants.
An important feature of the tower construction    is that 
 \begin{equation}\label{eq.pi1}
 \begin{array}{rccl}
 \pi \colon &\!\!\!\!   \Delta & \longrightarrow &
 M
 \\
& \!\!(x,\ell)&\longmapsto & f^\ell(x),
 \end{array}
 \end{equation}
   is a semiconjugacy between the original system and the tower system, i.e. 
\begin{equation*}\label{eq.cnjugar}
f\circ \pi = \pi\circ  T\qand \mu=\pi_*\nu,
\end{equation*}
%
where $\mu$ is the  $f$-invariant  probability  measure  given by 
  Theorem~\ref{th.SRB0}; see e.g. \cite[Proposition 3.27]{A20}. This means that   the tower system $(T,\nu)$ is an \emph{extension} of the   system $(f,\mu)$, or  $(f,\mu)$ is a \emph{factor} of   $(T,\nu)$.

\section{Jacobians and natural partitions}\label{jacandnat}

It is straightforward to check that   the tower map $T:\Delta\to\Delta$ also has a Jacobian $J_T$ (with respect to the measure $m$ on $\Delta_0$), given by
 \begin{equation}\label{eq.jacob}
J_T(x,\ell)= 
                \begin{cases}
                  1,  &  \mbox{if $\ell<R(x)-1$};\\
                  J_{F}(x),  & \mbox{if $\ell=R(x)-1$}.
                \end{cases}
            \end{equation}                
We will deduce some properties for the Jacobian $J_T$  and relate it to  a natural partition associated with  the tower map in a Volume Lemma. 
%
%
We start with some simple results for the Gibbs-Markov map~$F:\Delta_0\to\Delta_0$. 

\begin{lemma}\label{le.lower} There exists $C>0$ such that, for all $\omega\in\mathcal P$ and   $x\in \omega$,
 $$\frac1C\le \nu_0(\omega)J_F(x)\le C.$$
\end{lemma}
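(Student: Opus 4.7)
\medskip

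The plan is to combine the bounded distortion coming from the Gibbs property~\ref{G5} with the Markov and nonsingular properties~\ref{G1}--\ref{G2}, and then transfer from $m$ to $\nu_0$ using that $d\nu_0/dm$ is bounded above and below by positive constants (which is part of the statement of Theorem~\ref{th.SRB0}, and also shows up explicitly for the tower in Theorem~\ref{th.measuretower}).

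First, I would establish bounded distortion on each $\omega\in\mathcal P$: for any $x,y\in\omega$, the separation time satisfies $s(F(x),F(y))\ge 0$, so $\beta^{s(F(x),F(y))}\le 1$ and \ref{G5} yields
$$
\frac{J_F(x)}{J_F(y)}\le e^{C}.
$$
In particular, for every $x\in\omega$,
$$
e^{-C}\sup_{\omega}J_F\le J_F(x)\le e^{C}\inf_{\omega}J_F .
$$

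Next, using \ref{G1} we have $F(\omega)=\Delta_0$ ($m$ mod $0$), and \ref{G2} gives
$$
m(\Delta_0)=m(F(\omega))=\int_{\omega}J_F\,dm.
$$
Combining this with the bounded distortion bounds above, for every $x\in\omega$,
$$
e^{-C}\,J_F(x)\,m(\omega)\;\le\;m(\Delta_0)\;\le\;e^{C}\,J_F(x)\,m(\omega),
$$
so $J_F(x)\,m(\omega)$ is comparable to the fixed constant $m(\Delta_0)$, uniformly in $\omega$ and $x\in\omega$.

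Finally, since $d\nu_0/dm$ is bounded between two positive constants on $\Delta_0$, there exists $K>0$ with $K^{-1}m(\omega)\le\nu_0(\omega)\le K\,m(\omega)$ for every $\omega\in\mathcal P$. Multiplying through gives
$$
\frac{m(\Delta_0)}{K e^{C}}\le \nu_0(\omega)\,J_F(x)\le K e^{C}\,m(\Delta_0),
$$
which yields the claim upon choosing a suitable constant. I do not anticipate a real obstacle here; the only delicate point is the justification that $d\nu_0/dm$ is bounded above and below, but this is a standard consequence of the Gibbs-Markov structure (and is in fact invoked in the paper for the tower measure in Theorem~\ref{th.measuretower}), so I would simply cite it.
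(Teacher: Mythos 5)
Your proposal is correct and follows essentially the same route as the paper: use the Gibbs property~\ref{G5} to get uniform two-sided distortion of $J_F$ on each $\omega\in\mathcal P$, combine with~\ref{G1}--\ref{G2} to show $J_F(x)\,m(\omega)$ is comparable to $m(\Delta_0)$, and then transfer to $\nu_0$ via the two-sided bound on $d\nu_0/dm$. The detour through $\sup_\omega J_F$ and $\inf_\omega J_F$ is cosmetic; the substance is identical.
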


\begin{proof}
%
Since  $F$ is a Gibbs-Markov map, it follows from \ref{G5} that there exists  some   $C_1>0$ such that, for all $\omega\in\mathcal P$ and  $x,y\in\omega$,
\begin{equation}\label{eq.quot}
\frac1{C_1}\le \frac{J_{F}(y)}{J_{F}(x)}\le C_1.
\end{equation}
Using \ref{G2}, we obtain   for all $\omega\in\mathcal P$ and  $x\in\omega$,
$$m(\Delta_0)=\int_\omega J_{F}(y) dm(y)=\int_\omega J_{F}(x)\frac{J_{F}(y)}{J_{F}(x)} dm(y),
 $$
which together with   \eqref{eq.quot} yields
 \begin{equation*}\label{enqua}
\frac1{C_1} m(\omega) J_F(x)\le m(\Delta_0)  \le C_1 m(\omega) J_F(x).
\end{equation*}
Considering that  $d\nu_0/dm$ is bounded from above and below by positive constants, we easily get  the conclusion.
\end{proof}

\begin{corollary}\label{co.lower} There exists $c>0$ such that $J_F(x)\ge c$, for $\nu_0$ almost all $x\in \Delta_0$.
\end{corollary}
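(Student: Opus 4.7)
The plan is to read off the lower bound directly from the preceding Lemma~\ref{le.lower}. That lemma provides $C>0$ such that
\[
\nu_0(\omega)\,J_F(x)\ \ge\ \frac{1}{C}
\]
for every $\omega\in\mathcal P$ and every $x\in\omega$. Since $\nu_0$ is a probability measure and $\omega\subset\Delta_0$, we have $\nu_0(\omega)\le 1$, and therefore
\[
J_F(x)\ \ge\ \frac{1}{C\,\nu_0(\omega)}\ \ge\ \frac{1}{C}
\]
for every $x\in\omega$. Because $\mathcal P$ is a (countable) $m$ mod $0$ partition of $\Delta_0$ and $\nu_0\ll m$, the union $\bigcup_{\omega\in\mathcal P}\omega$ has full $\nu_0$-measure. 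Setting $c=1/C$ thus gives $J_F(x)\ge c$ for $\nu_0$-a.e.\ $x\in\Delta_0$, which is the desired conclusion.

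There is no real obstacle here: the corollary is essentially a one-line deduction from Lemma~\ref{le.lower} combined with the trivial inequality $\nu_0(\omega)\le 1$. The only subtlety worth flagging is the use of the partition $\mathcal P$ to cover $\Delta_0$ up to an $m$- (hence $\nu_0$-) null set, which is why the bound holds almost everywhere rather than everywhere.
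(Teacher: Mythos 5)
Your proof is correct and matches the paper's own argument: both invoke Lemma~\ref{le.lower} to get $J_F(x)\ge 1/(C\nu_0(\omega))\ge 1/C$ using $\nu_0(\omega)\le 1$, and both close by noting $\mathcal P$ is an $m$ mod $0$ partition and $\nu_0$ is absolutely continuous with respect to $m$.
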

\begin{proof}
It follows from Lemma~\ref{le.lower}  that, for all $\omega\in\mathcal P$ and   $x\in \omega$,
 $$J_F(x)\ge \frac{1}{C\nu_0(\omega)}\ge \frac1C.$$
Recall  that   $\mathcal P$ is an $m$ mod 0 partition of the set~$\Delta_0$ and $\nu_0$ is  a probability measure  equivalent to~$m$. 
\end{proof}

Let $(X,\eta)$ be a measure space and  $\varphi:X\to\mathbb R$ a measurable function. Recall that the integral  of $\varphi$ with respect to $\eta$    is defined   whenever 
 \begin{equation*}\label{star}
\int_{X}\varphi^+ d\eta<\infty\quad\text{or}\quad \int_{X} \varphi^- d\eta<\infty,
\end{equation*}
where $\varphi^\pm =\max\{0,\pm \varphi \}$.  In such case,
 \begin{equation}\label{star2}
\int_X\varphi d\eta=\int_{X}\varphi^+ d\eta- \int_{X} \varphi^- d\eta .
\end{equation}
As a consequence of the  next result, we have  that  the integral  of $  \log J_{F}$ with respect to the probability measure $\nu_0$ is defined.

\begin{corollary}\label{co.integral0} 
$\displaystyle\int_{\Delta_0} \log  {J_{F}}^{-} d\nu_0<\infty$.
\end{corollary}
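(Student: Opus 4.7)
The plan is to read off the conclusion directly from Corollary~\ref{co.lower}, which furnishes a uniform positive lower bound for $J_F$. Recall the convention introduced just above the statement: for a measurable function $\varphi$, $\varphi^{\pm}=\max\{0,\pm\varphi\}$, so that $(\log J_F)^{-}(x)=\max\{0,-\log J_F(x)\}$.

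First I would invoke Corollary~\ref{co.lower} to fix a constant $c>0$ with $J_F(x)\ge c$ for $\nu_0$-almost every $x\in\Delta_0$. Taking logarithms, this yields $\log J_F(x)\ge \log c$, and therefore
\[
(\log J_F)^{-}(x)=\max\{0,-\log J_F(x)\}\le \max\{0,-\log c\}
\]
for $\nu_0$-almost every $x\in\Delta_0$. In particular, $(\log J_F)^{-}$ is essentially bounded by the constant $|\log c|$ (which is zero if $c\ge 1$).

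Then I would integrate against $\nu_0$, which is a probability measure, to conclude
\[
\int_{\Delta_0}(\log J_F)^{-}\,d\nu_0 \;\le\; |\log c|\,\nu_0(\Delta_0)\;=\;|\log c|\;<\;\infty.
\]
There is no genuine obstacle here: the corollary is entirely a consequence of the Gibbs bounded distortion property \ref{G5} combined with the equivalence of $\nu_0$ and $m$ (Theorem~\ref{th.measuretower} applied to the induced system), both of which have already been established. The only subtlety worth flagging explicitly is that the almost-everywhere bound transfers from $m$ to $\nu_0$ because the two measures are mutually absolutely continuous, so the $\nu_0$-essential supremum of $(\log J_F)^{-}$ is finite and the desired integrability follows immediately.
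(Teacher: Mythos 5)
Your proposal is correct and follows essentially the same argument as the paper: invoke Corollary~\ref{co.lower} for the uniform lower bound $J_F\ge c$ ($\nu_0$-a.e.), deduce that $(\log J_F)^-$ is bounded by the constant $\max\{0,-\log c\}$, and integrate against the probability measure $\nu_0$.
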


\begin{proof}
It follows from Corollary~\ref{co.lower} that  $ -  \log J_{\!F}(x) \le -\log \alpha$, for $\nu_0$ almost all $x\in\Delta_0$. This implies that    $   \log {J_{F}}^-(x)\le \max\{0,-\log \alpha\}$, for~$\nu_0$ almost every $x\in \Delta_0$. Since $\nu_0$ is a probability measure, we have   the conclusion. 
\end{proof}

%

  In the next result we relate the Jacobians of the maps $F$ and $T$ with respect to the reference measure. Recall that $\rho>0$ has been introduced in~\eqref{eq.rho}. 

\begin{lemma} \label{davids}
$\displaystyle\int_\Delta  \log  {J_T}^\pm \, d{\nu} =\frac1\rho  \int_{\Delta_0}   \log {J_F}^\pm \,  d \nu_0.$
\end{lemma}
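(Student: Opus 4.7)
My plan is to exploit the fact that $\log J_T$ vanishes on all points of the tower except at the top of each column, where it coincides with $\log J_F$, and then to integrate by decomposing $\nu$ along the levels of $\Delta$.

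First, I would use the expression for $\nu$ from Theorem~\ref{th.measuretower} together with the definition of push-forward to write, for any nonnegative measurable $\varphi \colon \Delta \to [0,\infty]$,
\begin{equation*}
\int_\Delta \varphi\, d\nu = \frac{1}{\rho}\sum_{\ell=0}^\infty \int_\Delta \varphi\, d\bigl(T^\ell_*(\nu_0|\{R>\ell\})\bigr)=\frac{1}{\rho}\sum_{\ell=0}^\infty \int_{\{R>\ell\}} \varphi\circ T^\ell(x,0)\, d\nu_0(x).
\end{equation*}
Under the natural identification of the $\ell^{th}$ level $\Delta_\ell$ with the subset $\{R>\ell\}\subset\Delta_0$, the iterate $T^\ell(x,0)$ is simply the point $(x,\ell)\in\Delta_\ell$, and so the above reduces to
\begin{equation*}
\int_\Delta \varphi\, d\nu = \frac{1}{\rho}\sum_{\ell=0}^\infty \int_{\{R>\ell\}} \varphi(x,\ell)\, d\nu_0(x).
\end{equation*}
This level-by-level decomposition is the workhorse of the argument.

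Next, I would apply this identity with $\varphi = \log J_T^+$ and $\varphi = \log J_T^-$ separately (both nonnegative, so Tonelli/Fubini poses no issue). By the explicit formula \eqref{eq.jacob}, the function $(x,\ell)\mapsto \log J_T(x,\ell)$ vanishes whenever $\ell<R(x)-1$, hence so do its positive and negative parts. On the other hand, on the top floor of the column over $x$, where $\ell = R(x)-1$, we have $\log J_T(x,\ell)=\log J_F(x)$, and thus $\log J_T^\pm(x,\ell)=\log J_F^\pm(x)$. Consequently only the term $\ell=R(x)-1$ survives in each column, giving
\begin{equation*}
\int_\Delta \log J_T^\pm\, d\nu = \frac{1}{\rho}\sum_{\ell=0}^\infty \int_{\{R=\ell+1\}} \log J_F^\pm(x)\, d\nu_0(x),
\end{equation*}
where I used that $\{R>\ell\}\cap\{R=\ell+1\}=\{R=\ell+1\}$.

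Finally, since $R$ takes values in $\mathbb N$, the family $\{\{R=k\}\}_{k\ge 1}$ is a countable partition of $\Delta_0$ modulo a $\nu_0$-null set, so reindexing $k=\ell+1$ and applying monotone convergence yields
\begin{equation*}
\int_\Delta \log J_T^\pm\, d\nu = \frac{1}{\rho}\int_{\Delta_0}\log J_F^\pm\, d\nu_0,
\end{equation*}
which is the claim. There is no genuine obstacle here: the content is purely bookkeeping, the only care being to justify the rearrangement of the sum, which is automatic since all integrands are nonnegative and the level sets $\Delta_\ell$ are pairwise disjoint.
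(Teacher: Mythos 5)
Your proposal is correct and follows essentially the same route as the paper: both decompose the integral level by level over the tower, use the explicit form of $J_T$ in~\eqref{eq.jacob} to note that $\log J_T^\pm$ vanishes off the top floors $\{\ell = R(x)-1\}$ where it equals $\log J_F^\pm$, and then sum over the partition $\{R=\ell+1\}_{\ell\ge 0}$ of $\Delta_0$. The only cosmetic difference is that you derive the level-by-level identity from the push-forward formula for $\nu$, whereas the paper invokes it directly via $\nu|_{\Delta_\ell^{\ell+1}} = \tfrac{1}{\rho}\,\nu_0|_{\{R=\ell+1\}}$; the content is identical.
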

\begin{proof}
For each  $ \ell\in \mathbb{N}_0$, set 
$$
\Delta_\ell^{\ell+1}=\{(x,\ell) \in \Delta \colon  R(x)=\ell+1 \}.
$$ 
Using these sets and~\eqref{eq.jacob}, we obtain
$$
 \log{J_T}^\pm(x,\ell)= 
                \begin{cases}
                  0,  &  \mbox{if $x\notin \Delta_\ell^{\ell+1}$};\\
                  \log{J_F}^\pm(x),  & \mbox{if $x\in \Delta_\ell^{\ell+1}$}.
                \end{cases}
$$
From the expression of $\nu$ in Theorem~\ref{th.measuretower}, we  deduce that
\begin{equation}
\nu\vert_{ \Delta_\ell^{\ell+1}}= \rho\nu_0\vert_{\{R=\ell+1\}}
\end{equation}
The previous considerations yield
\begingroup
\allowdisplaybreaks
\begin{align*}
\int_\Delta \log{J_T}^\pm(x,\ell) \, d\nu(x,\ell) &=\sum_{\ell=0}^\infty 
 \int_{\Delta_\ell}   \log{J_T}^\pm(x,\ell) \, d\nu(x,\ell)\\
&=\sum_{\ell=0}^\infty \int_{\Delta_\ell^{\ell+1}}   \log{J_F}^\pm(x) \, d\nu(x,\ell)\\
&= \frac1\rho\sum_{\ell=0}^\infty \int_{\{R=\ell+1\}}   \log{J_F}^\pm(x) \, d\nu_0(x)\\
& = \frac1\rho   \int_{\Delta_0}   \log{J_F}^\pm(x)  d \nu_0(x).
\end{align*}
\endgroup
The proof is complete.
\end{proof}

\begin{corollary}\label{co.integral}
 $\displaystyle\int_{\Delta} \log  {J_{T}}^{-} d\nu<\infty$.
\end{corollary}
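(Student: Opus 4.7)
The plan is to combine the two results just established. Lemma \ref{davids}, applied with the choice of sign being the negative part, gives the identity
$$\int_\Delta \log J_T^{-}\,d\nu \;=\; \frac{1}{\rho}\int_{\Delta_0}\log J_F^{-}\,d\nu_0,$$
so the finiteness of the left-hand side reduces to the finiteness of the right-hand side. But that right-hand integral is exactly the quantity treated in Corollary \ref{co.integral0}, which asserts $\int_{\Delta_0}\log J_F^{-}\,d\nu_0<\infty$. Dividing by $\rho$, which is a finite positive constant by \eqref{eq.rho} (recall $R\in L^1(m)$ and $d\nu_0/dm$ is bounded above and below), yields the conclusion.

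There is essentially no obstacle here: the corollary is a direct bookkeeping consequence of the two preceding statements, packaged so that it can be cited later when verifying that $\int\log J_T\,d\nu$ is well defined (in the sense of \eqref{star2}) for the tower system. The only thing to be mindful of is making sure that the sign convention in Lemma \ref{davids} is invoked with the minus sign rather than the plus sign, and that the factor $1/\rho$ does not spoil the argument, which of course it does not since $0<\rho<\infty$.
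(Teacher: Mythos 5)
Your proof is correct and is exactly the paper's approach: the paper's one-line proof simply says to use Corollary~\ref{co.integral0} and Lemma~\ref{davids}, and you have spelled out precisely how those two results combine (invoking the minus-sign case of Lemma~\ref{davids} and noting $0<\rho<\infty$).
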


\begin{proof}
Use  Corollary~\ref{co.integral0}  and Lemma~\ref{davids}.
\end{proof}
         
           The countable $m$ mod 0 partition $\mathcal{P}$ of $\Delta_0$ associated with the Gibbs-Markov  map $ F$ naturally induces an  $m$ mod $0$ countable partitions on each level of the tower. Collecting all these partitions, we obtain an $m$ mod $0$  partition  $\mathcal{Q}$ of the entire tower $\Delta$. A sequence of dynamically generated $m$ mod 0 partitions   of $\Delta$ is then defined,  for each $n\ge1$, by
           \begin{equation} \label{towerpartition}
\mathcal{Q}_n=\bigvee_{i=0}^{n-1}T^{-i} \mathcal{Q} .
\end{equation}
Given $(x,\ell)\in\Delta$, let $\mathcal Q_n(x,\ell)$ denote the element of $\mathcal Q_n$ containing the point $(x,\ell)\in\Delta$.
     For a proof of  the following bounded distortion property for $T$ see e.g.   \cite[Lemma 3.30]{A20}.

\begin{lemma}\label{le.jacT}
There exists   $C_1>0$ such that, for all $n\geq 0$ and $ (y,k) \in \mathcal Q_n(x,\ell)$,
\begin{equation*} \label{distor}
\frac1{C_1} \leq \frac{J_{T^n}(y,k)}{J_{T^n}(x,\ell)} \leq C_1. 
\end{equation*}
\end{lemma}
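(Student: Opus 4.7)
The plan is to reduce the claimed distortion bound for $T^n$ on the tower to the Gibbs distortion (G$_5$) for $F$ on the base, exploiting the fact that the partition $\mathcal Q$ respects the level structure and $R$ is constant on elements of $\mathcal P$.

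First I would use formula \eqref{eq.jacob} and the chain rule to observe that
$$J_{T^n}(x,\ell) = J_{F^k}(x),$$
where $k=k(x,\ell,n)$ denotes the number of returns to the base $\Delta_0$ that the orbit $(x,\ell),T(x,\ell),\dots,T^{n-1}(x,\ell)$ makes. Indeed, climbing-up-the-tower steps contribute a factor $1$, and the only nontrivial factors come from the indices $j$ at which $T^j(x,\ell)$ sits at the top of its column, each contributing $J_F(F^i(x))$ for successive $i=0,1,\dots,k-1$.

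Next I would unwind what it means for $(y,k')$ to lie in $\mathcal Q_n(x,\ell)$. Since elements of $\mathcal Q$ are contained in a single level, $k'=\ell$. Because $R$ is constant on elements of $\mathcal P$, both orbits climb the same column in lockstep and drop to the base at the very same times $\tau_1<\tau_2<\dots<\tau_k\le n$; the $\mathcal Q_n$-condition applied at each return forces $F^i(x)$ and $F^i(y)$ to lie in the same element of $\mathcal P$ for $i=0,1,\dots,k-1$. Equivalently, $x$ and $y$ belong to a common element of $\mathcal P_k$, so $s(x,y)\ge k$ and consequently $s(F^{i+1}(x),F^{i+1}(y))\ge k-1-i$ for $0\le i\le k-1$. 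In particular $J_{T^n}(y,\ell)=J_{F^k}(y)$, with the same $k$.

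The main computation is now an application of (G$_5$) together with the chain rule. Writing
$$\log\frac{J_{F^k}(x)}{J_{F^k}(y)}=\sum_{i=0}^{k-1}\log\frac{J_F(F^i(x))}{J_F(F^i(y))},$$
and using (G$_5$) (with roles of $x,y$ symmetrised to obtain a two-sided estimate) together with the separation-time bound just derived, I get
$$\left|\log\frac{J_{F^k}(x)}{J_{F^k}(y)}\right|\le \sum_{i=0}^{k-1}C\beta^{s(F^{i+1}(x),F^{i+1}(y))}\le C\sum_{i=0}^{k-1}\beta^{\,k-1-i}\le \frac{C}{1-\beta}.$$
Setting $C_1=\exp\bigl(C/(1-\beta)\bigr)$ yields the stated two-sided bound, uniformly in $n$ and in the element $\mathcal Q_n(x,\ell)$.

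The only step needing genuine care is the combinatorial bookkeeping in the second paragraph, namely that being in the same $\mathcal Q_n$-element on the tower is equivalent to sharing the first $k$ symbols of the $F$-itinerary on the base. Once this identification is in place, the distortion estimate follows directly from the geometric-series argument above and is essentially the standard Gibbs-Markov distortion estimate, transplanted to the tower.
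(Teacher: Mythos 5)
Your proof is correct. The paper does not supply its own proof of this lemma (it simply cites \cite[Lemma 3.30]{A20}), and what you have written is the standard argument one would find there: reduce $J_{T^n}(x,\ell)=J_{F^k}(x)$ via the chain rule (the only nontrivial factors being the drops to the base), use that $R$ is constant on $\mathcal P$-elements so both orbits drop at the same times, extract $s(x,y)\ge k$ from the $\mathcal Q_n$-condition, and sum the geometric series coming from~\ref{G5}. The bookkeeping is handled carefully, including the necessary two-sided form of~\ref{G5} obtained by symmetrising $x$ and $y$. One minor notational point: you reuse $k$ both for the second coordinate of the point in the lemma statement and for the number of returns to the base; introducing a fresh letter for the latter (as you partly do by writing $(y,k')$) would avoid potential confusion, but the logic is unaffected.
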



Our goal now  is to  compare   $m(\mathcal{Q}_n(x,\ell))$ with the Jacobian $J_{T^n}(x,\ell)$, at least for an infinite number of times $n\in \mathbb{N}$. This is the main technical difference between inducing schemes and tower extensions we have to deal with in order to replicate the same approach, since in the former case, owing to the Markov property~\ref{G1}, the comparison can be done for all times $n \in \mathbb{N}$. Such property does not pass down to the natural partiton $\mathcal{Q}$ of the tower but, fortunately, the strategy does not require all its strength. In the present situation, we use the approach in  \cite{AP21} for piecewise expanding maps, and the key feature of existence of infinitely many moments  for which the respective forward image of the refined atom   of a typical point has measure uniformly bounded away from zero. Here, the  bound is  given   by the measure of the ground level.
  Given  $(x,\ell) \in \Delta$, set 
$$
\mathcal{M}(x,\ell)=\left\{n\ge 1: m(T^{n}(\mathcal{Q}_n(x,\ell))) \geq m(\Delta_0)\right\}.
$$

\begin{lemma}\label{le.qusimarkov}
The set
$ 
\mathcal{M}(x,\ell)
$ 
has infinitely many elements, for $m$ almost every $(x,\ell)\in \Delta$.
\end{lemma}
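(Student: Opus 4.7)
The plan is to show that $\mathcal{M}(x,\ell)$ contains every return time of the $T$-orbit of $(x,\ell)$ to the base $\Delta_0$, and then to invoke ergodicity of $\nu$ to ensure that almost every orbit has infinitely many such return times. The key observation is that the Markov property~\ref{G1}, applied along the excursions of the orbit between successive visits to the base, forces $T^n(\mathcal{Q}_n(x,\ell))$ to coincide with $\Delta_0$ ($m$ mod $0$) at each return time, so that the measure of the forward image is exactly $m(\Delta_0)$.

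First I would use that $\nu(\Delta_0)=1/\rho>0$ and that $\nu$ is ergodic (Theorem~\ref{th.measuretower}), together with Birkhoff's theorem, to conclude that for $\nu$-almost every $(x,\ell)\in\Delta$ the orbit returns to $\Delta_0$ infinitely often; since $\nu$ and $m|_\Delta$ are equivalent, the same holds $m$-almost everywhere. Denote the resulting return times by $n_1<n_2<\cdots$. Next I would fix such an $n=n_k$, write $\omega_0\in\mathcal{P}$ for the cylinder containing $x$ and $\omega_j\in\mathcal{P}$ for the cylinder containing $F^j(x)$, and unravel the definition of $\mathcal{Q}_n(x,\ell)$. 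Because the tower map acts as a simple shift on each floor, and because $\mathcal{Q}$ restricted to a floor is inherited from $\mathcal{P}$, the requirement that $(y,\ell)$ share a $\mathcal{Q}$-atom with $(x,\ell)$ at each iterate $i\in\{0,1,\ldots,n-1\}$ reduces to
$$\mathcal{Q}_n(x,\ell)=\{(y,\ell)\,:\,y\in A\},\qquad A=\omega_0\cap F^{-1}(\omega_1)\cap\cdots\cap F^{-(k-1)}(\omega_{k-1}).$$
Moreover $T^n$ sends $(y,\ell)\in\mathcal{Q}_n(x,\ell)$ to $(F^k(y),0)$. Iterating~\ref{G1}, which gives $F(\omega)=\Delta_0$ ($m$ mod $0$) on every Markov branch, yields $F^k(A)=\Delta_0$ ($m$ mod $0$). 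Hence $m(T^n(\mathcal{Q}_n(x,\ell)))=m(\Delta_0)$, so $n\in\mathcal{M}(x,\ell)$.

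The main technical subtlety is the bookkeeping needed to describe $\mathcal{Q}_n(x,\ell)$ when $(x,\ell)$ does not start at the base: during each upward-shift phase the refinements $T^{-i}\mathcal{Q}$ impose no new constraint, while each visit of the orbit to $\Delta_0$ contributes exactly one Markov constraint of the form $F^j(y)\in\omega_j$. Once this description is in place the Markov property~\ref{G1} closes the argument at once, and I do not expect any further obstacle; in fact the inequality in the definition of $\mathcal{M}(x,\ell)$ is attained with equality at every return time $n_k$.
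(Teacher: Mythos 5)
Your proof is correct and takes essentially the same approach as the paper: the core step, iterating the Markov property \ref{G1} along the successive returns of the $T$-orbit of $(x,\ell)$ to the base to deduce $T^n(\mathcal{Q}_n(x,\ell))=\Delta_0$ ($m$ mod $0$) at each return time $n$, is identical. Your appeal to ergodicity and Birkhoff's theorem for the infinitude of return times is valid but heavier than needed --- the paper simply uses the explicit cumulative return times $R_n=\sum_{j=0}^{n-1}R\circ(T^R)^j$, which tend to infinity automatically since $R\geq 1$, so no ergodic-theoretic input is required.
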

\begin{proof}
The identification of $T^R$ and $F$   allows us to think of the   partitions in~\eqref{eq.novacoisa} as partitions of the base level of the tower.
%
We also use  $\mathcal P_n(x,0)$ to denote the element of $\mathcal P_n$ containing the point $(x,0)\in\Delta_0$.  
  Setting  for  each $n\ge 1$ 
 \begin{equation*}\label{eq.errek}
R_n=\sum_{j=0}^{n-1}R\circ (T^R)^j. 
\end{equation*} it easily follows that, for $m$ almost every $(x,0)\in\Delta_0$,
 $$
T^{R_n}(\mathcal{Q}_{R_n}(x,0)) = (T^R)^{ n}(\mathcal{P}_{ n}(x,0))=\Delta_0.
 $$
Recalling that $T$ is an upward translation between consecutive returns to the base, we have for $m$ almost all $(x,\ell)\in\Delta $
$$
 T^{R_n-\ell}(\mathcal{Q}_{R_n-\ell}(x,\ell)) = T^{R_n}(\mathcal{Q}_{R_n}(x,0)) =  \Delta_0.
 $$
 This clearly gives the conclusion.
\end{proof}

Note that the proof of the Lemma~\ref{le.qusimarkov} provides  the same conclusion for a more accurate  version of  $\mathcal{M}(x,\ell)$. In fact, it gives   $ T^{n}(\mathcal{Q}_n(x,\ell))=\Delta_0$, for infinitely many values of $n\in\mathbb N$. However,  a uniform lower bound for $m( T^{n}(\mathcal{Q}_n(x,\ell)))$ is really what we need  for the proof next lemma.


\begin{lemma}[Volume Lemma]
 \label{volume}
There exists   $C_0>0$ such that, for $m$ almost all $(x,\ell) \in\Delta $ and all  $n\in \mathcal{M}(x,\ell)$,  
$$
\frac1{C_0}\leq m(\mathcal{Q}_{n}(x,\ell))  J_{T^n}(x,\ell) \leq C_0.
$$
\end{lemma}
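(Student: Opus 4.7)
The plan is to combine the change of variables formula for $J_{T^n}$ with the bounded distortion estimate from Lemma \ref{le.jacT}. Starting from the defining identity of the Jacobian,
$$
m\bigl(T^n(\mathcal{Q}_n(x,\ell))\bigr) \;=\; \int_{\mathcal{Q}_n(x,\ell)} J_{T^n}\, dm,
$$
and applying Lemma \ref{le.jacT} to bound the integrand pointwise by $J_{T^n}(x,\ell)$ up to the distortion constant $C_1$, I obtain
$$
\frac{1}{C_1}\, J_{T^n}(x,\ell)\, m\bigl(\mathcal{Q}_n(x,\ell)\bigr) \;\leq\; m\bigl(T^n(\mathcal{Q}_n(x,\ell))\bigr) \;\leq\; C_1\, J_{T^n}(x,\ell)\, m\bigl(\mathcal{Q}_n(x,\ell)\bigr).
$$
The lemma will follow once I sandwich the middle quantity between two multiples of $m(\Delta_0)$.

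The lower sandwich is immediate from the hypothesis $n\in\mathcal{M}(x,\ell)$, which by definition gives $m(T^n(\mathcal{Q}_n(x,\ell)))\geq m(\Delta_0)$. For the upper sandwich, I would use the observation that the partition $\mathcal{Q}$ was built by collecting partitions on each level $\Delta_k$ separately, so every atom of $\mathcal{Q}$ lies inside a single level; consequently $T^n(\mathcal{Q}_n(x,\ell))$ is contained in the level $\Delta_{k^\ast}$ determined by $T^n(x,\ell)$, and under the identification $\Delta_k \leftrightarrow \{R>k\}\subset\Delta_0$ we have $m(\Delta_{k^\ast})=m(\{R>k^\ast\})\leq m(\Delta_0)$.

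Putting the two bounds together yields
$$
\frac{m(\Delta_0)}{C_1} \;\leq\; J_{T^n}(x,\ell)\, m\bigl(\mathcal{Q}_n(x,\ell)\bigr) \;\leq\; C_1\, m(\Delta_0),
$$
so the constant $C_0 := C_1 \max\{m(\Delta_0),\, 1/m(\Delta_0)\}$ does the job. I do not foresee any serious obstacle: the entire argument is a routine combination of distortion and the uniform lower bound supplied by $\mathcal{M}(x,\ell)$; the only point one must be careful about is that $\mathcal{Q}$ respects the level decomposition of $\Delta$, which is built into its construction and guarantees the clean upper bound $m(T^n(\mathcal{Q}_n(x,\ell)))\leq m(\Delta_0)$ used above.
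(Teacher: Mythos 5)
Your proof is correct and follows essentially the same route as the paper's: change of variables for $J_{T^n}$, bounded distortion from Lemma~\ref{le.jacT}, and the lower sandwich coming directly from the definition of $\mathcal{M}(x,\ell)$. The only difference is in the upper sandwich, where the paper simply bounds $m(T^n(\mathcal{Q}_n(x,\ell)))$ by $m(\Delta)$ (finite since $R\in L^1(m)$), whereas you observe that $T^n(\mathcal{Q}_n(x,\ell))$ sits inside a single level and hence has measure at most $m(\Delta_0)$; both observations are valid, yours is marginally sharper, and the resulting constants differ only cosmetically.
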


\begin{proof} 
%
For all $n\ge 1$ and $m$ almost all $(x,\ell)\in \Delta$, we may write
\begin{align}
m(T^n(\mathcal{Q}_{n}(x,\ell))) &= \int_{\mathcal{Q}_{n}(x,\ell)} J_{T^n}(y,k) \, d m(y,k)\nonumber\\
 &=\int_{\mathcal{Q}_{n}(x,\ell)} \frac{J_{T^n}(y,k)}{J_{T^n}(x,\ell)} J_{T^n}(x,\ell) \,  d m(y,k).\label{eq.also}
\end{align}
It follows  from~\eqref{eq.also} and~Lemma~\ref{le.jacT} that
$$
m(\Delta) \geq m(T^n(\mathcal{Q}_{n}(x,\ell)))\geq C_1^{-1}  J_{T^n}(x,\ell)  m(\mathcal{Q}_{n}(x,\ell)),
$$
and consequently, 
$$
J_{T^n}(x,\ell)  m(\mathcal{Q}_{n}(x,\ell)) \leq   C_1 m(\Delta) .
$$
In addition, it  follows from \eqref{eq.also},~Lemma~\ref{le.jacT} and the definition of the set $\mathcal M(x,\ell)$ that, for all  $n \in \mathcal{M}(x,\ell)$,
$$
m(\Delta_0) \leq m(T^n(\mathcal{Q}_{n}(x,\ell))) \leq C_1  J_{T^n}(x,\ell)  m(\mathcal{Q}_{n}(x,\ell)),
$$
and therefore,
$$
 C_1^{-1}m(\Delta_0)  \leq J_{T^n}(x,\ell)   m(\mathcal{Q}_{n}(x,\ell)).
$$
\noindent
Taking $C_0=\max\left\{ m(\Delta) C_1, (C_1^{-1}m(\Delta_0) )^{-1}\right\}$, we are done.
\end{proof}

\section{Entropy  of  the tower system}

The goal of  this  section is to deduce  the entropy formula for the  tower system.
We could appeal to Rohlin's formula, which gives the entropy   transformations with a generating partition as the  integral of the Jacobian of the transformation with respect to the invariant measure; see e.g.  \cite[Theorem 9.7.3]{VO16}. However, to avoid introducing  new concepts, we provide in Proposition~\ref{entrinduce}  a  self-contained  proof of the  entropy formula  for the tower system. This   uses     a  general    version of Birkhoff Ergodic Theorem
for functions whose integral is defined; recall~\eqref{star2}.  
 The need  of this is due to the fact that    nothing  guarantees the integrability of $\log J_T$ with respect to $\nu$. The integrability of $\log {J_T}^-$  given by Corollary~\ref{co.integral} will be  sufficient for our purpose. 


\begin{lemma}[Generalised Birkhoff Theorem]
\label{birgen}
Let $\Phi:X\to X$ preserve an ergodic probability measure $\eta$ and $\varphi:X\to\mathbb R$ be such that $\int_X\varphi\, d\eta$ is defined. For $\eta$ almost every~$x\in X$,
$$\lim_{n \rightarrow \infty} \frac{1}{n} \sum_{i=0}^{n-1} \varphi(\Phi^i(x)) = \int_X\varphi d\eta.$$
\end{lemma}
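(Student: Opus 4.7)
The plan is to reduce to the classical Birkhoff ergodic theorem by a truncation argument. If $\varphi\in L^1(\eta)$ then the conclusion is just the standard Birkhoff theorem, so I focus on the case where exactly one of $\int\varphi^\pm\,d\eta$ is infinite. By the symmetry $\varphi\mapsto -\varphi$, I may assume $\int_X\varphi^-\,d\eta<\infty$ and $\int_X\varphi^+\,d\eta=+\infty$, so that $\int_X\varphi\,d\eta=+\infty$; the goal becomes to show that the Birkhoff averages of $\varphi$ diverge to $+\infty$ for $\eta$ almost every $x$.

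For each integer $M\ge 1$ I would introduce the truncation $\varphi_M=\min\{\varphi,M\}$. Since $\varphi_M^+\le M$ and $\varphi_M^-=\varphi^-$, we have $\varphi_M\in L^1(\eta)$, and the classical Birkhoff theorem yields a $\Phi$-invariant set $X_M\subseteq X$ of full $\eta$-measure on which the time averages of $\varphi_M$ converge to $\int_X\varphi_M\,d\eta$. Putting $X_\infty=\bigcap_{M\ge 1}X_M$ (still of full measure) and exploiting $\varphi\ge \varphi_M$ pointwise, I obtain
$$\liminf_{n\to\infty}\frac{1}{n}\sum_{i=0}^{n-1}\varphi(\Phi^i(x))\ \ge\ \int_X\varphi_M\,d\eta\qquad\text{for every }x\in X_\infty \text{ and every } M\ge 1.$$

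To finish, I would let $M\to\infty$ and apply monotone convergence to $\varphi_M^+\nearrow \varphi^+$, together with the fact that $\varphi_M^-=\varphi^-$ is a fixed $L^1$ function, to conclude that $\int_X\varphi_M\,d\eta\to +\infty$. This forces the liminf above to be $+\infty$ on $X_\infty$, which matches the right-hand side of the claim. The one delicate point is ensuring that the exceptional null set does not depend on $M$, which is resolved cleanly by the countable intersection $X_\infty$; beyond that, no substantial obstacle is expected, as the argument is a routine extension of classical Birkhoff.
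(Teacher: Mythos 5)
Your proof is correct and follows essentially the same truncation-plus-monotone-convergence strategy as the paper: reduce to the case $\int\varphi^-\,d\eta<\infty$, $\int\varphi^+\,d\eta=+\infty$, apply classical Birkhoff to a bounded-above truncation, and let the truncation level tend to infinity via MCT. The only cosmetic difference is that you truncate $\varphi$ itself as $\min\{\varphi,M\}$, whereas the paper first reduces (implicitly using $\varphi^-\in L^1$ and Birkhoff) to showing $\frac1n\sum_{i<n}\varphi^+(\Phi^i x)\to\infty$ and then truncates $\varphi^+$ as $\min\{N,\varphi^+\}$; your variant actually sidesteps the small implicit step the paper takes in asserting it suffices to treat $\varphi^+$.
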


\begin{proof} 
If $\varphi\in L^1(\eta)$,    the conclusion  is just the classical case of Birkhoff Ergodic Theorem.  Otherwise,  one of the functions $\varphi^\pm$ belongs in $L^1(\eta)$ and the  integral of the other one is equal to $+\infty$.
Suppose  for definiteness that $\int\varphi^+d\eta=+\infty$. In such  case, we have $\int\varphi d\eta=+\infty$.
Set for each $n\ge 1$
$$S_n\varphi =\sum_{j=0}^{n-1}\varphi\circ\Phi^j.$$
Note that both $S_n\varphi$ and   $\int\varphi d\eta$ are linear in $\varphi$. Since   $\varphi=\varphi^+-\varphi^-$,   it is enough to show that, for $\eta$ almost every $x\in X$, 
 \begin{equation}\label{eq.fracinfty}
\lim_{n\to\infty}\frac1nS_n\varphi^+(x)=+\infty.
\end{equation}
Set for each $N\ge 1$, 
 $$
 \varphi_N=\min\{N,\varphi^+\}.
 $$
By the Monotone Convergence Theorem, 
  \begin{equation}\label{eq.falta}
\lim_{N\to\infty}\int\varphi_Nd\eta= \int\varphi^+d\eta=+\infty.
\end{equation}
 Also, for all $x\in X$ and $n,N\ge 1$,
\begin{equation}\label{eq.barulho}
 \frac1n S_n\varphi^+(x)\ge  \frac1n S_n\varphi_N(x).
\end{equation}
 Since $\eta$ is a finite measure and $\varphi_N$ is bounded, we have $\varphi_N\in L^1(\eta)$, for all $N\ge1$.  
 It follows from Birkhoff Ergodic Theorem that, for $\eta$ almost every $x\in X$,
$$ \lim_{n\to\infty}\frac1n S_n\varphi_N(x)=\int\varphi_Nd\eta ,$$
which together with~\eqref{eq.falta} and~\eqref{eq.barulho} yields~\eqref{eq.fracinfty}.
\end{proof}

In Lemma~\ref{gener.} below,  we show   that the entropy of the natural partition~$\mathcal Q$ introduced in Section~\ref{jacandnat} gives the entropy  of the tower system.  Recall that 
  \begin{equation}\label{eq.htQ}
h_{\nu}(T,\mathcal{Q})=\inf_{n\ge1}\frac1n H_\nu(\mathcal Q_n)
\end{equation}
and
$$H_\nu(\mathcal Q_n) =- \sum_{ \omega\in\mathcal Q_n}\nu( \omega)\log\nu( \omega).$$

\begin{lemma} \label{gener.}
$h_{\nu}(T)=h_{\nu}(T,\mathcal{Q})$.
\end{lemma}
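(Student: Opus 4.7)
The plan is to establish that $\mathcal{Q}$ is a generating partition for $T$ with respect to $\nu$ and then to invoke the Kolmogorov--Sinai theorem. Since $\mathcal{Q}$ is countable, the generating condition amounts to showing that $\bigvee_{n=0}^{\infty} T^{-n}\mathcal{Q}$ separates points of $\Delta$ modulo $\nu$.

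Two points $(x,\ell)$ and $(y,k)$ at different levels of the tower are trivially separated, because each level $\Delta_\ell$ is itself a union of atoms of $\mathcal{Q}$. The nontrivial case is that of two distinct points $(x,\ell), (y,\ell)$ at the same level lying in a common atom of $\mathcal{Q}$. Then both project onto the same element $\omega\in\mathcal{P}$, so in particular $R(x)=R(y)=R(\omega)$, and applying $T$ exactly $R(\omega)-\ell$ times brings them simultaneously to the base, at $(F(x),0)$ and $(F(y),0)$ respectively. Under the identification of $T^R$ with the Gibbs--Markov map $F$, the same unwinding used in the proof of Lemma~\ref{le.qusimarkov} shows that the refined atoms $\mathcal{Q}_{R_n}(x,0)$ at the base are precisely $\mathcal{P}_n(x)\times\{0\}$. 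Property \ref{G4} asserts that $\mathcal{P}_\infty$ is the partition into single points of $\Delta_0$, hence further refinements of $\mathcal{Q}$ along the forward orbit eventually separate $(F(x),0)$ from $(F(y),0)$, and therefore also $(x,\ell)$ from $(y,\ell)$.

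Once the generating property of the countable partition $\mathcal{Q}$ is in hand, the Kolmogorov--Sinai theorem delivers the identity $h_\nu(T)=h_\nu(T,\mathcal{Q})$. This is classical in the case $H_\nu(\mathcal{Q})<\infty$. In the case $H_\nu(\mathcal{Q})=\infty$, the same equality still holds with both sides equal to $+\infty$: by monotonicity $H_\nu(\mathcal{Q}_n)\geq H_\nu(\mathcal{Q})=+\infty$ for every $n$, so by \eqref{eq.htQ} one has $h_\nu(T,\mathcal{Q})=+\infty$; on the other hand, exhausting $\mathcal{Q}$ by finite-entropy truncations $\mathcal{Q}^{(N)}$ and using that these refine to $\mathcal{Q}$ (which is generating) forces $h_\nu(T)\geq \sup_N h_\nu(T,\mathcal{Q}^{(N)}) = +\infty$ as well.

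The main technical point is the verification of the generating property of $\mathcal{Q}$. Unlike $\mathcal{P}$ for $F$, the partition $\mathcal{Q}$ does not satisfy a Markov property globally on $\Delta$, so the argument cannot proceed by a direct analysis of refinements of $\mathcal{Q}$ on the tower. Instead, one must pass through the return map $T^R=F$ and exploit the separating property \ref{G4} of the underlying Gibbs--Markov map at the base level, carefully tracking the correspondence between $\mathcal{P}_n$ on $\Delta_0$ and $\mathcal{Q}_{R_n}$ on the tower.
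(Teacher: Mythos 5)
Your verification of the generating property follows the same line as the paper: pass through the return to the base and invoke \ref{G4} to conclude that $\mathcal Q_\infty$ is the point partition of $\Delta$. Your treatment of the finite-entropy case is then the classical Kolmogorov--Sinai theorem. The paper instead cites Parry's Corollary~5.12 for the increasing sequence $\mathcal Q_n$, obtaining $h_\nu(T)=\lim_n h_\nu(T,\mathcal Q_n)=h_\nu(T,\mathcal Q)$ in one stroke, without splitting into cases.

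Where your proof genuinely fails is the infinite-entropy case. You assert $\sup_N h_\nu(T,\mathcal{Q}^{(N)})=+\infty$ on the grounds that the finite-entropy truncations $\mathcal{Q}^{(N)}$ increase to the generating partition $\mathcal{Q}$. This implication does not hold: a countable generating partition of infinite static entropy does \emph{not} force the dynamical entropy to be infinite. Take an irrational rotation $T$ of the circle with Lebesgue measure $\nu$, and a countable partition $\mathcal{Q}$ into intervals accumulating at a single point, with lengths chosen so that $H_\nu(\mathcal{Q})=\infty$. Since the orbits of the endpoints are dense, $\mathcal{Q}$ is generating; yet $h_\nu(T)=0$, hence $h_\nu(T,\mathcal{Q}^{(N)})\le h_\nu(T)=0$ for every truncation, so $\sup_N h_\nu(T,\mathcal{Q}^{(N)})=0\ne\infty=h_\nu(T,\mathcal{Q})$. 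Whatever makes the lemma true here must use the Gibbs--Markov/tower structure --- for instance the distortion bound in Lemma~\ref{le.jacT}, which makes $\nu$ behave like a product measure on cylinders and forces $h_\nu(T,\mathcal Q^{(N)})$ to grow with $H_\nu(\mathcal Q^{(N)})$, or Abramov's relation to $h_{\nu_0}(F)$ --- and your argument invokes none of it. You have identified the generating property as the main technical point, whereas the delicate step is in fact the inequality $h_\nu(T)\ge h_\nu(T,\mathcal{Q})$ when $H_\nu(\mathcal{Q})=\infty$.
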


\begin{proof}
By  the separation property \ref{G4} for the Gibbs-Markov map $T^R:\Delta_0\to\Delta_0$, we have that
 $  \mathcal P_\infty$ is the partition into single points of $\Delta_0$.  This   implies    that
 $  \mathcal Q_\infty$ is the partition into single points of $\Delta$. 
  Since $\mathcal Q_1 \le \mathcal Q_2\le\cdots$, it follows from  \cite[Corollary 5.12]{P69a} that
 $$h_\nu(T)=\lim_{n\to\infty} h_\nu(T,\mathcal Q_n).$$
 On the other hand, it is a general fact that $h_\nu(T,\mathcal Q)=h_\nu(T,\mathcal Q_n)$, for all $n\ge1$. 
This completes the proof.
\end{proof}

In the next result, we establish the entropy formula for tower systems for which the natural partition of  the tower has finite entropy. It follows from Lemma~\ref{gener.} and~\eqref{eq.htQ} that the finiteness of $H_\nu(\mathcal Q)$  is actually equivalent to the finiteness of the entropy of the tower system. The example in Section~\ref{se.systemno} shows that the conclusion   is no longer valid if the entropy is not finite.

\begin{proposition} \label{entrinduce}
If   $H_\nu(\mathcal Q)<\infty$, then 
$$\displaystyle h_{\nu}(T)=\int_\Delta   \log J_T  d\nu.$$
\end{proposition}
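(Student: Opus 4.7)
The plan is to combine three ingredients: the Shannon-McMillan-Breiman theorem applied to the natural partition $\mathcal Q$, the Volume Lemma relating $m(\mathcal Q_n(x,\ell))$ to $J_{T^n}(x,\ell)$, and the generalised Birkhoff theorem (Lemma~\ref{birgen}) applied to $\log J_T$. By Lemma~\ref{gener.} we may work with $h_\nu(T,\mathcal Q)$ instead of $h_\nu(T)$, and the partition $\mathcal Q$ is generating (its iterated refinement separates points) with $H_\nu(\mathcal Q)<\infty$ by hypothesis, so the classical SMB theorem applies and yields
$$
\lim_{n\to\infty}-\frac{1}{n}\log\nu(\mathcal Q_n(x,\ell))=h_\nu(T,\mathcal Q)
$$
for $\nu$ almost every $(x,\ell)\in\Delta$.

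First I would replace $\nu$ by $m$ in this limit. Since Theorem~\ref{th.measuretower} gives that $d\nu/dm$ is bounded from above and below by positive constants, $|\log\nu(\mathcal Q_n(x,\ell))-\log m(\mathcal Q_n(x,\ell))|$ stays uniformly bounded, so dividing by $n$ we also get
$$
\lim_{n\to\infty}-\frac{1}{n}\log m(\mathcal Q_n(x,\ell))=h_\nu(T,\mathcal Q)
$$
for $\nu$ almost every $(x,\ell)$. Next, for $\nu$ almost every $(x,\ell)$ the set $\mathcal M(x,\ell)$ is infinite (Lemma~\ref{le.qusimarkov} together with $\nu\ll m$), and on that subsequence the Volume Lemma gives
$$
\left|\frac{1}{n}\log J_{T^n}(x,\ell)+\frac{1}{n}\log m(\mathcal Q_n(x,\ell))\right|\le\frac{\log C_0}{n}.
$$
Combining this with the SMB limit above, we conclude that along $n\in\mathcal M(x,\ell)$,
$$
\lim_{n\to\infty}\frac{1}{n}\log J_{T^n}(x,\ell)=h_\nu(T,\mathcal Q)=h_\nu(T),
$$
a finite number because $H_\nu(\mathcal Q)<\infty$ forces $h_\nu(T)<\infty$.

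Finally, I would apply the generalised Birkhoff theorem to the function $\log J_T$, noting that its negative part is integrable by Corollary~\ref{co.integral} so that $\int\log J_T\,d\nu$ is well defined (possibly $+\infty$). Since by the chain rule $\log J_{T^n}(x,\ell)=\sum_{j=0}^{n-1}\log J_T(T^j(x,\ell))$, Lemma~\ref{birgen} gives
$$
\lim_{n\to\infty}\frac{1}{n}\log J_{T^n}(x,\ell)=\int_\Delta\log J_T\,d\nu
$$
for $\nu$ almost every $(x,\ell)$, along the full sequence of integers. Comparing with the subsequential limit already computed, the full limit must equal the finite value $h_\nu(T)$, giving the desired identity. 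The main subtlety is the possibility that $\int\log J_T\,d\nu=+\infty$, which the above comparison rules out automatically; the technical heart of the argument is the fact that the Markov-type control $m(T^n(\mathcal Q_n(x,\ell)))\geq m(\Delta_0)$ only holds along the subsequence $\mathcal M(x,\ell)$, forcing us to pass through two different limit arguments (SMB/Volume Lemma on the subsequence, Birkhoff on the full sequence) and glue them together.
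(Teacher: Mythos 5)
Your proof is correct and follows essentially the same route as the paper's own argument: reduce to $h_\nu(T,\mathcal Q)$ via Lemma~\ref{gener.}, apply Shannon--McMillan--Breiman, pass from $\nu$ to $m$ using the bounded density, invoke the Volume Lemma along the subsequence $\mathcal M(x,\ell)$, and close with the generalised Birkhoff theorem (Lemma~\ref{birgen}) together with Corollary~\ref{co.integral} to ensure the integral is well defined. The only cosmetic difference is that you spell out explicitly the ``glue'' step --- that Birkhoff gives the limit along the full sequence of integers, hence its restriction to the subsequence $\mathcal M(x,\ell)$ must agree with the finite value $h_\nu(T)$ obtained there --- whereas the paper presents the same logic as a chain of equalities.
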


\begin{proof} First of all, note that, as we assume $H_\nu(\mathcal Q)<\infty$,   Shannon-McMillan-Breiman Theorem can be used  to obtain~$h_{\nu}(T,\mathcal{Q})$.  Taking  $(x,\ell)\in\Delta$ a    generic point with respect to $\nu$ (or $m$), the proof is produced with the following ingredients:

\begin{enumerate}
\item   $\mathcal Q$ is a generating partition (Lemma \ref{gener.});
\item Shannon-McMillan-Breiman Theorem;
\item $d\nu/dm$ is bounded from above and below by positive constants (Theorem~\ref{th.measuretower});
\item $\mathcal{M}(x,\ell)$ has infinitely many elements (Lemma~\ref{le.qusimarkov});
\item Volume Lemma (Lemma~\ref{volume});
\item Chain Rule for the Jacobian;
\item Generalised Birkhoff Theorem for $\log J_{T}$ (Corollary~\ref{co.integral} \& Lemma~\ref{birgen});
\end{enumerate}
and prepared as follows:
\begingroup
\allowdisplaybreaks
\begin{equation*}
\begin{split}
h_{\nu}(T) &\stackrel{(1)}=h_{\nu}(T,\mathcal{Q})\\
&\stackrel{(2)}=\lim_{n\rightarrow \infty} -\frac{1}{n} \log \nu(\mathcal{Q}_n(x,\ell))\\
&\stackrel{(3)}=\lim_{n\rightarrow \infty}-\frac{1}{n} \log m(\mathcal{Q}_n(x,\ell))\\
	&\stackrel{(4)}=  \lim_{\substack{n \to \infty  \\ n \in \mathcal{M}(x,\ell)}} -\frac{1}{n} \log m(\mathcal{Q}_{n}(x,\ell))\\
	&\stackrel{(5)}= \lim_{\substack{n \rightarrow \infty  \\ n \in \mathcal{M}(x,\ell)}} \frac{1}{n} \log J_{T^{n}}(x,\ell) \\	
	&\stackrel{(6)}=  \lim_{\substack{n \rightarrow \infty  \\ n \in \mathcal{M}(x,\ell)}} \frac{1}{n} \sum_{i=0}^{n-1} \log J_T(T^i(x,\ell)) \\
		&\stackrel{(7)}= \int_\Delta  \log J_T \,\, d\nu.
\end{split}
\end{equation*}
\endgroup
This gives the conclusion.
\end{proof}

We finish this section with a result that will be used  to obtain  the equivalence in  the second  item of 
  Theorem~\ref{th.formendo}.

\begin{lemma}\label{le.giniteq}
$H_{\nu}(\mathcal Q)<\infty$ if, and only if, $\displaystyle\int_{\Delta_0} R\log J_{F}dm<\infty$.
\end{lemma}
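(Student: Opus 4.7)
The plan is to compute $H_\nu(\mathcal Q)$ explicitly in terms of the partition masses $\nu_0(\omega)$ and the recurrence times $R(\omega)$, and then convert each $-\log\nu_0(\omega)$ into a Jacobian integral via the Gibbs-Markov distortion estimates already developed in Section~\ref{jacandnat}.

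First I would identify the atoms of $\mathcal Q$. Since $R$ is constant on each $\omega\in\mathcal P$, the natural partition $\mathcal Q$ consists of the sets $\omega\times\{\ell\}$ with $\omega\in\mathcal P$ and $0\le\ell<R(\omega)$. Under the identification of $\Delta_\ell$ with $\{R>\ell\}\subset\Delta_0$, the formula for $\nu$ in Theorem~\ref{th.measuretower} yields $\nu(\omega\times\{\ell\})=\rho^{-1}\nu_0(\omega)$. Using $\sum_\omega R(\omega)\nu_0(\omega)=\int R\,d\nu_0=\rho$, a direct expansion gives
$$H_\nu(\mathcal Q) \;=\; \log\rho \;-\; \frac{1}{\rho}\sum_{\omega\in\mathcal P} R(\omega)\,\nu_0(\omega)\log\nu_0(\omega).$$
Because $\nu_0(\omega)\le 1$, every summand is nonnegative, so the finiteness of $H_\nu(\mathcal Q)$ is equivalent to the finiteness of $\Sigma:=-\sum_{\omega}R(\omega)\,\nu_0(\omega)\log\nu_0(\omega)$.

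Next I would match $\Sigma$ with $\int R\log J_F\,d\nu_0$. Lemma~\ref{le.lower} gives $|{-\log\nu_0(\omega)}-\log J_F(x)|\le\log C$ for every $x\in\omega$, while the Gibbs condition \ref{G5} bounds the oscillation of $\log J_F$ on each $\omega$ by an absolute constant. Combining these two estimates, one obtains
$$\left| -R(\omega)\,\nu_0(\omega)\log\nu_0(\omega) \;-\; R(\omega)\!\int_\omega\log J_F\,d\nu_0 \right| \;\le\; K\,R(\omega)\,\nu_0(\omega)$$
for some absolute constant $K$. Summing over $\omega\in\mathcal P$ and using $\sum_\omega R(\omega)\nu_0(\omega)=\rho<\infty$, the accumulated error is bounded by $K\rho$, so $\Sigma<\infty$ if and only if $\int_{\Delta_0} R\log J_F\,d\nu_0<\infty$.

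Finally I would transfer this to the reference measure $m$. By Corollary~\ref{co.lower} the function $(\log J_F)^-$ is uniformly bounded, hence $\int R(\log J_F)^-\,d\nu_0\le C'\rho<\infty$; so $\int R\log J_F\,d\nu_0$ is finite precisely when its positive part is. Since $d\nu_0/dm$ is bounded above and below by positive constants (by applying Theorem~\ref{th.measuretower} on the base $\Delta_0$, where $\nu\vert_{\Delta_0}=\rho^{-1}\nu_0$), this last condition is in turn equivalent to $\int_{\Delta_0} R\log J_F\,dm<\infty$. The one technical subtlety is ensuring that possibly unbounded negative values of $\log J_F$ do not corrupt the equivalence; this is exactly what the uniform positive lower bound on $J_F$ supplied by Corollary~\ref{co.lower} rules out.
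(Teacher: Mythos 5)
Your proposal is correct and follows essentially the same route as the paper: compute $H_\nu(\mathcal Q)$ explicitly in terms of $\nu_0(\omega)$ and $R(\omega)$, use Lemma~\ref{le.lower} to trade $-\log\nu_0(\omega)$ for $\log J_F$ up to a uniformly bounded additive error, and absorb the error via $\int R\,d\nu_0=\rho<\infty$ before passing from $\nu_0$ to $m$ using the two-sided bound on $d\nu_0/dm$. Your explicit remark that Corollary~\ref{co.lower} keeps the negative part of $R\log J_F$ integrable (so the integral is always well-defined and the equivalence is unambiguous) makes a point the paper leaves implicit, but the argument is the same; the separate appeal to~\ref{G5} is redundant since Lemma~\ref{le.lower} already encodes the needed uniformity.
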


\begin{proof}
Set for each $n\ge 1$
$$\mathcal R_n= \left\{\omega\in\mathcal P\colon R(\omega) =n\right\}.
$$
By definition of the tower, for each $\omega\in\mathcal R_n$, there are exactly $n$ elements $\tilde\omega\in\mathcal Q$ above $\omega$  and, by Theorem~\ref{th.measuretower}, 
$$\nu(\tilde\omega)=\frac1\rho \nu_0(\omega),$$
with $\rho$ as in~\eqref{eq.rho}.
Hence,
\begin{eqnarray}
H_\nu(\mathcal Q) &=&- \sum_{\tilde\omega\in\mathcal Q}\nu(\tilde\omega)\log\nu(\tilde\omega)\nonumber\\
&=&\frac1\rho\sum_{n\ge 1} \sum_{\omega\in\mathcal R_n}n  \nu_0(\omega)\log\left(\frac\rho {\nu_0(\omega)}\right)\nonumber\\
&=&\frac1\rho \int_{\Delta_0} R(x) \log \left(\frac\rho{  \nu_0(\mathcal P(x) )}\right) d\nu_0(x).
\label{eq.nomn}
\end{eqnarray}
Using Lemma~\ref{le.lower}, we get
 \begin{equation*}\label{eq.gsd0}
\frac\rho{C}J_F(x)\le \frac\rho{ \nu_0(\mathcal P(x) )}\le C\rho J_F(x),
\end{equation*}
and so
 \begin{equation}\label{eq.gsd}
\log\frac\rho{C}+\log J_F(x)\le  \log \left(\frac\rho{  \nu_0(\mathcal P(x) )}\right)\le \log (C\rho) +\log J_F(x).
\end{equation}
It follows from~\eqref{eq.nomn} and~\eqref{eq.gsd} that
$$ \log \frac\rho{C}\int_{\Delta_0} Rd\nu_0+\int_{\Delta_0} R\log J_F d\nu_0 \le  \rho H_\nu(\mathcal Q) \le 
\log({C}\rho)\int_{\Delta_0} Rd\nu_0+\int_{\Delta_0} R\log J_Fd\nu_0.
$$
Since  $d\nu_0/dm$ is bounded from above and below by positive constants and $\int_{\Delta_0} Rd\nu_0<\infty$, we get the conclusion. 
\end{proof}

\section{Entropy of the original system}\label{se.original}

Here, we complete the proof of Theorem~\ref{th.formendo}.  Since
the tower system $(T,\nu)$ is an extension of  $(f,\mu)$ with countably many fibers, we may  use \cite[Proposition 2.8]{B99b} to get
\begin{equation} \label{buzzi28}
h_\mu(f)= h_\nu(T).
\end{equation}
%
%
By Lemma~\ref{gener.} and~\eqref{eq.htQ}, we have
\begin{equation}\label{eq.final}
 h_{\nu}(T)=h_{\nu}(T,\mathcal{Q})=\inf_{n\ge1}\frac1n H_\nu(\mathcal Q_n).
\end{equation}
Since $H_\nu(\mathcal Q)\le H_\nu(\mathcal Q_n)$, for all $n\ge1$, it follows from~\eqref{eq.final} and   Lemma~\ref{le.giniteq} that
\begin{equation}\label{eq.final2}  h_{\nu}(T)<\infty \iff H_\nu(\mathcal Q) <\infty \iff \int_{\Delta_0} R\log J_{F}dm<\infty,
\end{equation}
which together with~\eqref{buzzi28} yields 
\begin{equation*} 
h_\mu(f)<\infty \iff \int_{\Delta_0} R\log J_{F}dm<\infty.
\end{equation*}
This gives  the second  item of 
  Theorem~\ref{th.formendo}.
The first item 
   is    a  consequence of~\eqref{buzzi28},~\eqref{eq.final2},  Proposition~\ref{entrinduce} and  the next result.  
  

\begin{lemma} \label{vvvvv}   
$\displaystyle\int_\Delta  \log J_T \,d\nu = \int_M   \log |\det Df|\,  d \mu$.
\end{lemma}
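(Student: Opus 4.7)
The plan is to reduce both sides to the common intermediate value
$$\frac{1}{\rho}\int_{\Delta_0}\log J_F\, d\nu_0,$$
which lies in $(-\infty,+\infty]$ thanks to Corollary~\ref{co.integral0}.

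For the left-hand side I would apply Lemma~\ref{davids} to $\log J_T^+$ and $\log J_T^-$ separately. Since $\int_\Delta \log J_T^-\, d\nu<\infty$ by Corollary~\ref{co.integral}, the subtraction of the two integrals is legitimate and yields directly
$$\int_\Delta \log J_T\, d\nu = \frac{1}{\rho}\int_{\Delta_0}\log J_F\, d\nu_0.$$

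For the right-hand side I would use the semiconjugacy $\pi(x,\ell)=f^\ell(x)$ from~\eqref{eq.pi1} and the relation $\mu=\pi_*\nu$ to write
$$\int_M \log|\det Df|\, d\mu = \int_\Delta \log|\det Df(f^\ell(x))|\, d\nu(x,\ell).$$
Inserting the explicit expression for $\nu$ from Theorem~\ref{th.measuretower} and identifying the $\ell$-th tower level with $\{R>\ell\}\subset\Delta_0$, this decomposes level by level as
$$\frac{1}{\rho}\sum_{\ell=0}^\infty\int_{\{R>\ell\}}\log|\det Df(f^\ell(x))|\, d\nu_0(x).$$
Swapping the sum and integral via Tonelli applied to the positive and negative parts of the integrand separately, and then applying the chain rule along each $f$-orbit, which remains inside $M\setminus S$ by the very definition of the induced map, gives
$$\frac{1}{\rho}\int_{\Delta_0}\sum_{\ell=0}^{R(x)-1}\log|\det Df(f^\ell(x))|\, d\nu_0(x) = \frac{1}{\rho}\int_{\Delta_0}\log|\det Df^{R(x)}(x)|\, d\nu_0(x) = \frac{1}{\rho}\int_{\Delta_0}\log J_F\, d\nu_0,$$
since $F=f^R$ and $J_F$ is its Jacobian with respect to Lebesgue measure. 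This matches the expression obtained for the left-hand side, completing the argument.

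The main obstacle is the integrability bookkeeping: whereas the negative part of $\log J_T$ is under control via Corollary~\ref{co.integral}, the negative part of $\log|\det Df|\circ\pi$ is not immediately so, since the individual iterates $\log|\det Df(f^\ell(x))|$ may swing far below zero even though their sum $\log J_F(x)$ over a full return has integrable negative part. Hence the $\pm$ decomposition and the swap of sum and integral must be handled with care, treating the positive and negative parts of the integrand as two separate Tonelli applications and only subtracting at the very end. The identity is then to be read in the generalised sense of~\eqref{star2}, so that both sides are either equal finite numbers or simultaneously $+\infty$.
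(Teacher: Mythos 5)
Your proof is correct, and it takes a mildly different route from the paper's. The paper proves the identity in a single chain of equalities: decomposing $\int_\Delta\log J_T\,d\nu$ over the partition $\{\Delta_\ell^n\}$, applying the chain rule to $\log J_{f^R}$ on the top cells $\Delta_\ell^{\ell+1}$, and then performing the crucial double-sum rearrangement that regroups the resulting terms by level so as to recognise $\int_\Delta\log|\det Df|\circ\pi\,d\nu$. You instead pivot on the intermediate quantity $\frac{1}{\rho}\int_{\Delta_0}\log J_F\,d\nu_0$, using Lemma~\ref{davids} (which the paper had already isolated precisely in order to establish Corollary~\ref{co.integral}) to handle the left-hand side, and carrying out a separate level-by-level Tonelli argument plus chain rule for the right-hand side. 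The underlying technical content -- chain rule along returns and a Fubini/Tonelli swap on the levels -- is the same in both cases. What your version buys is a cleaner modularization (reusing Lemma~\ref{davids} rather than re-deriving its content) and an explicit discussion of the integrability bookkeeping, which the paper's proof leaves implicit; the paper's version buys brevity by collapsing everything into one computation. Your observation at the end is the right one to worry about: $\log|\det Df|$ is not individually bounded below even though $\log J_F$ is (Corollary~\ref{co.lower}), and the fact that $R(x)$ is finite for a.e.\ $x$ -- so that the per-point chain-rule sum has finitely many terms and both its positive and negative partial sums are finite -- is exactly what makes the $\pm$-separated Tonelli argument close up.
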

\begin{proof} 
Consider the measurable partition 
$\mathcal{Q}_R=\{\Delta_\ell^n\}$ of $\Delta$, defined for all $ \ell\in \mathbb{N}_0$ and $ n>\ell$ by
$$
\Delta_\ell^n=\{(x,\ell) \in \Delta \colon  R(x)=n \}.
$$ 
Clearly, the natural partition $\mathcal{Q}$ is a refinement of $\mathcal{Q}_R$. Recalling~\eqref{eq.jacob}, we have  
$$
J_T|_{\Delta_\ell^n}(x,\ell)=
                \begin{cases}
                  1,  & \mbox{if $n>\ell+1$};
                 \\
                 J_{f^R}(x), & \mbox{if $n=\ell+1$}.
                \end{cases}
$$
Writing for simplicity  $J_f=|\det Df|$ and  using  the chain rule, we have for each  $x \in \Delta_0$ with $R(x)=n$ 
$$
J_{f^R}(x)=J_f(f^{n-1}(x))\cdots J_f(f(x)) \cdot J_f(x).
$$ 
The previous considerations 
yield
\begingroup
\allowdisplaybreaks
\begin{align*}
\int_\Delta \log J_T(x,\ell) \, d\nu(x,\ell) &=\sum_{\ell=0}^\infty 
 \int_{\Delta_\ell}   \log J_T(x,\ell) \, d\nu(x,\ell)\\
&=\sum_{\ell=0}^\infty \int_{\Delta_\ell^{\ell+1}}   \log J_{f^R}(x) \, d\nu(x,\ell)\\
	&=\sum_{\ell=0}^\infty \sum_{i=0}^\ell \int_{\Delta_\ell^{\ell+1}} \log J_f( f^i(x)) \, d\nu (x,\ell)\\
	&= \sum_{\ell=0}^\infty \sum_{n>\ell} \int_{\Delta_\ell^n}   \log J_f ( f^\ell(x)) \, d\nu(x,\ell) \\	
	&= \sum_{\ell=0}^\infty \int_{\Delta_\ell}  \log J_f ( f^\ell(x)) \, d\nu(x,\ell) \\
		&= \sum_{\ell=0}^\infty \int_{\Delta_\ell}   \log J_f ( \pi(x,\ell)) \, d\nu(x,\ell) \\
		&= \int_{\Delta}  \log J_f (\pi (x,\ell))\, d\nu(x,\ell) .
\end{align*}
\endgroup
Finally, observing that $\pi_*\nu=\mu$, we get
$$
\int_{\Delta} \, \log J_f\circ \pi \, d\nu =\int_{M} \, \log J_f \, d\pi_*\nu=\int_{M} \, \log J_f \, d\mu.
$$
The proof is complete.
\end{proof}

\section{Infinite entropy}\label{se.systemno}

In this section, we give  an  example of a piecewise smooth interval map  having  an  SRB measure given by   a Gibbs-Markov induced map and not satisfying  the entropy formula.
To build this  example, we are actually going to give
\begin{itemize}
\item[\mylabel{i1}{($i_1$)}] an interval  $\Delta_0$ with finite Lebesgue measure $m$;
\item[\mylabel{i2}{($i_2$)}]   a  piecewise $C^\infty$  Gibbs-Markov map $F:\Delta_0\to\Delta_0$ with   associated partition $\mathcal P$;
\item[\mylabel{i3}{($i_3$)}]   a   measurable function $R:\Delta_0 \to \mathbb N$   constant on each element of $\mathcal P$, with $R\in L^1(m)$. 
\end{itemize}
The tower map $T:\Delta\to\Delta$ associated with these objects   can  easily  be thought of   as~a piecewise $C^\infty$   map of an interval with countably many smoothness domains and  
$$J_T =|T '|$$ on each smoothness domain. In this case,  the return to the base of the tower $T^R:\Delta_0\to\Delta_0$, which  is naturally identified with the map $F$, is a Gibbs-Markov induced map for $T$.   Therefore, we need   to present objects as in \ref{i1}-\ref{i3} such that for  the corresponding tower map $T:\Delta\to\Delta$ we have 
\begin{equation}\label{notentro}
h_\nu(T)=\infty\qand \int_\Delta  \log J_T \, d{\nu}<\infty,
\end{equation}
where $\nu$ is  the unique  $T$-invariant probability measure such that $\nu\ll m$, given by Theorem~\ref{th.measuretower}. 
We are going to use
the continuous map $\phi:[0,1/e]\to[0,1/e]$, given by  $\phi(0)=0$ and 
 $$\phi(x)=-x\log x,$$
 for each $x>0$. 
Note that, for all $0<x<1/e$,
\begin{equation}\label{eq.deriva}
\phi'(x)=-\log x-1 >0\qand \phi''(x)=-\frac1x<0. 
\end{equation}
Hence, 
$\phi$ is an increasing concave function. Since $\phi$ is continuous at $0$ and $\phi(1/e)=1/e$, then $\phi$ is a  bijection.
For each $n\ge2$, set
 $$
 a_n=\phi^{-1}\left(\frac1{n^2 \log n}\right).
 $$
  and
$$ b_n=\sum_{i=2}^{n}a_i .$$ 
Set also $b_1=0$. In Lemma~\ref{le.sums} below   we show   that $$ b=\sum_{n=2}^{\infty}a_n<+\infty.$$
 Consider 
  $\Delta_0=\left[0, b\right]
$
and  the $m$ mod 0 partition $\mathcal P=\{\omega_n\}_{n\ge 2}$ of $\Delta_0$, where
 $$\omega_n=(b_{n-1},b_n),
 $$
 for each $n\ge 2$.
We define $F:\Delta_0\to\Delta_0$, mapping  each $\omega_n$ linearly onto $\Delta_0$. 
 In this way, we have for each $n\ge2$
  $$J_F\vert_{\omega_n}=|F'\vert_{\omega_n}|=\frac{b}{a_n}\ge \frac{b}{a_2}>1.$$
 It is not difficult to check  that $F$ is a Gibbs-Markov map; see e.g. \cite[Lemma 3.3]{A20}. We also define a map $R:\Delta_0\to\mathbb N$, setting for all $n\ge 2$
  $$R\vert_{\omega_n}=n.$$
Finally, consider the tower map $T:\Delta\to\Delta$ as in Section~\ref{towerextension} associated with these objects.
Observe that, for each $n\ge2$,
 $$m(\{ R=n\})= m(\omega_n)=a_n.$$
It follows from the construction   above  that
 \begin{align}
\int_{\Delta_0}R \, dm &=\sum_{n\ge2}n a_n,\label{um}\\
  \int_{\Delta_0}\log J_F dm &=\sum_{n\ge2}  a_n (\log b-\log a_n) =\log b \sum_{n\ge2}  a_n + \sum_{n\ge2}\phi(a_n) ,\label{dois}\\
   \int_{\Delta_0}R\log J_F dm &=\sum_{n\ge2}  n a_n (\log b-\log a_n) =\log b \sum_{n\ge2} n a_n + \sum_{n\ge2}n\phi(a_n) .\label{tres}
\end{align}
In Lemma~\ref{le.sums} below, we show that $\sum_{n\ge2}n a_n<+\infty$, which then gives  $R\in L^1(m)$.
By Lemma~\ref{davids}, 
$$\displaystyle\int_\Delta  \log J_T \, d{\nu} =\frac1\rho  \int_{\Delta_0}   \log J_F \,  d \nu_0,$$
with $
\rho= \int R d\nu_0<\infty$ as in~\eqref{eq.rho}.
 Recalling the second item of Theorem~\ref{th.formendo} and \eqref{dois}-\eqref{tres} above, 
 we   obtain~\eqref{notentro} from the next result.

\begin{lemma}\label{le.sums} \ 
\begin{enumerate}
\item[\mylabel{s1}{(1)}] $\sum_{n\ge2}a_n<\infty$;
\item[\mylabel{s2}{(2)}]  $\sum_{n\ge2}n a_n<\infty$;
\item[\mylabel{s3}{(3)}]  $\sum_{n\ge2}\phi(a_n)<\infty$;
\item[\mylabel{s4}{(4)}]  $\sum_{n\ge2}n \phi(a_n)=\infty$.
\end{enumerate}
\end{lemma}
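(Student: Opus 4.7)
The plan is to observe that items \ref{s3} and \ref{s4} are almost tautological, since by the very definition of $a_n$ we have $\phi(a_n)=1/(n^2\log n)$, so
\[
\sum_{n\ge 2}\phi(a_n)=\sum_{n\ge 2}\frac{1}{n^2\log n}<\infty
\qand
\sum_{n\ge 2}n\,\phi(a_n)=\sum_{n\ge 2}\frac{1}{n\log n}=\infty,
\]
by standard integral-test arguments (the antiderivative of $1/(x\log x)$ is $\log\log x$). So the genuine content of the lemma lies in \ref{s1} and \ref{s2}, which both require an asymptotic estimate for $a_n$.

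Next I would establish that $a_n\sim \dfrac{1}{2n^2\log^2 n}$ as $n\to\infty$. Starting from $-a_n\log a_n=1/(n^2\log n)$ and the fact that $a_n\to 0$ (which follows from monotonicity of $\phi$ on $(0,1/e)$ and the fact that $1/(n^2\log n)\to 0$), I would take logarithms of both sides to get
\[
\log a_n+\log(-\log a_n)=-2\log n-\log\log n.
\]
Since $\log(-\log a_n)=o(|\log a_n|)$ as $a_n\to 0$, dividing by $\log n$ and letting $n\to\infty$ shows $\log a_n\sim -2\log n$, hence $-\log a_n\sim 2\log n$. Substituting back into $a_n=\phi(a_n)/(-\log a_n)$ yields the claimed asymptotic.

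With this asymptotic in hand, \ref{s1} follows from comparison with the convergent series $\sum 1/(n^2\log^2 n)$, and \ref{s2} follows from comparison with $\sum 1/(n\log^2 n)$, the latter being convergent by the integral test (the antiderivative of $1/(x\log^2 x)$ is $-1/\log x$, which is bounded as $x\to\infty$).

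The only step that requires any care is the asymptotic estimate of $a_n$, and in particular justifying rigorously that $\log(-\log a_n)$ is negligible compared to $\log a_n$; this will be the main, though still elementary, technical point. Everything else is immediate from classical convergence tests.
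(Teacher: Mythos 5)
Your proposal is correct, and the asymptotic $a_n \sim \tfrac{1}{2n^2\log^2 n}$ is accurate, but you take a genuinely different route from the paper. You extract the full asymptotic of $a_n$ by taking logarithms of the defining equation $-a_n\log a_n = 1/(n^2\log n)$ and discarding the lower-order term $\log(-\log a_n)$; the paper instead avoids any asymptotic equivalence and obtains only a one-sided bound, which is all the lemma actually needs. Concretely, the paper writes $a_n = \phi^{-1}(1/(n^2\log n)) - \phi^{-1}(0) = (\phi^{-1})'(c_n)/(n^2\log n)$ by the mean value theorem, uses concavity (so $\phi^{-1}(y)\le y$, giving $\phi^{-1}(c_n)\le 1/n^2$ for $n\ge3$) to bound $(\phi^{-1})'(c_n) = 1/(-\log\phi^{-1}(c_n)-1)\le 1/\log n$, concludes $n a_n\le 1/(n\log^2 n)$, and then applies the Cauchy condensation test. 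Your approach buys a sharper statement (the exact constant $\tfrac12$), at the price of a slightly more delicate justification: you should make explicit that $\log(-\log a_n) = \log|\log a_n|$ is positive for large $n$ and that the estimate $\log|\log a_n| = o(|\log a_n|)$ is a general property of any sequence tending to $0$, so the step is not circular. The paper's approach is leaner since it never needs the asymptotic constant, only an upper bound, and all the concavity inequalities are elementary. Both proofs of items \ref{s3} and \ref{s4} are the same. Either proof is acceptable.
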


\begin{proof}

Note that 
$\phi(a_n)=1/({n^2 \log n}),$
for each $n\ge2$.
Therefore, 
\ref{s3} is obvious and~\ref{s4} can be easily obtained  by  the integral test. Note also that \ref{s2} implies \ref{s1}. It remains  to verify the second item. 
For each $n\ge2$,
 $$  a_n= \phi^{-1}\left(\frac1{n^2\log n}\right)=\phi^{-1}\left(\frac1{n^2\log n}\right)-\phi^{-1}(0)=
 \frac{ ( \phi^{-1} )'(c_n)}{n^2\log n},
 $$
 for some $c_n$ between 0 and $1/(n^2\log n)$. It follows from~\eqref{eq.deriva} that, for each $n\ge2$,
 $$
  ( \phi^{-1} )'(c_n)=\frac{1}{-\log \phi^{-1}(c_n)-1}.
 $$
Using that $\phi^{-1}$ is an increasing concave function, we have for all $n\ge 3$
 $$
  0\le \phi^{-1}(c_n)\le  \phi^{-1}\left(\frac1{n^2\log n}\right)   \le  \frac1{n^2\log n} \le \frac1{n^2}.
 $$
   It follows that
   $$
  ( \phi^{-1} )'(c_n) 
  \le \frac{1}{2\log n-1} \le \frac{1}{ \log n },
 $$
  for all $n\ge3$,
 and so
 \begin{equation}\label{eq.enqua}
n a_n \le  \frac{1}{n \log^2 n}.
\end{equation}
Now, by the Cauchy condensation test,  
$$
 \sum_{n\ge2}\frac{1}{n \log^2 n} \le  \sum_{n\ge2}\frac{2^{n}}{2^n \log^2 2^n} =  \sum_{n\ge2}\frac{1}{n^2 \log^2 2}<\infty.
$$
Since this last series converges and \eqref{eq.enqua} holds for all $n\ge 3$, we get  the second item.
\end{proof}


\begin{remark}\label{re.smbt2}
The system $(T,\nu)$ is  a counterexample to Ruelle inequality in the setting of maps which are differentiable only almost everywhere. Indeed, we have for $\nu$ almost every $(x,\ell)\in\Delta$,
$$\lim_{n\to\infty}\frac1n|(T^n)(x(x,\ell))|=\int_{\Delta}\log |T'| d\nu<\infty= h_\nu(T).
$$
This is  the reverse of Ruelle inequality. 
\end{remark}

Regarding the previous remark, it will be interesting to  note that  Ruelle inequality is  obtained in \cite[Proposition 1]{BSV18} for Lipschitz maps. Actually, the proof uses only that the map is   uniformly continuous and differentiable almost everywhere, the latter  being assured by  Rademacher Theorem. The system $(T,\nu)$  also illustrates how important is the uniform continuity for the conclusion of \cite[Proposition 1]{BSV18}.

\begin{remark}\label{re.smbt}
The system $(T,\nu)$  illustrates  that  the conclusion of  Shannon-McMillan-Breiman Theorem does not hold in case  we do not assume that the generating partition has finite entropy. Indeed,   in the previous sections, condition $H_\nu(\mathcal Q)<\infty$ (which is equivalent to $h_\nu(T)<\infty$) is used  only in step (2) of the proof of Proposition~\ref{entrinduce}. Therefore, equalities (3)-(7) in that proof remain valid for $(T,\nu)$, yielding for every  generic point $(x,\ell)\in\Delta$
$$
\lim_{n\rightarrow \infty}-\frac{1}{n} \log \nu(\mathcal{Q}_n(x,\ell))=\int_{\Delta}\log |T'| d\nu <\infty = h_\nu(T).
$$
\end{remark}

 \section{Applications}\label{se.app1}

In the  subsections below, we apply Theorem~\ref{th.formendo} and Corollary~\ref{co.entform0} to some classes of piecewise smooth maps having  SRB measures given by Gibbs-Markov induced maps. We admit the possibility   that in some of the cases  the entropy formula has  already be obtained by other methods. 
Anyway, we include them all here as a way of illustrating how vast and general the applications of our main results in this first part can be. We present  some classes of  concrete examples, but we could have placed more abstract classes such as those dealt with  in 
\cite{DHL06} or \cite{E20}.

\subsection{Lorenz-like  maps}\label{se.singular}

Here, we consider a  class of piecewise expanding one-dimensional maps with a singular point,  introduced by Guckenheimer and Williams in \cite[Section 2]{GW79}. They  appear as the quotient of  a Poincaré return map  for  a geometric model  of the Lorenz flow; see    \cite[Section 3.3.2.1]{AP10a} for   details. 
%
Such quotient maps are  modeled by   a family of   transitive  maps $f: I\rightarrow I$  of the interval $I =[-1/2,1/2]$  satisfying the following properties:
\begin{enumerate} \label{prop 1dim L}
\item 
  $f$ is a $C^{1+}$ local diffeomorphism in  $ I \setminus \{0\}$ with 
$
f(0^+)=-1/2 $ and $ f(0^-)=1/2; 
$
\item 
there is $\lambda>1$ such that
$f'(x)\geq \lambda$, for all $x \in I \setminus \{0\}$;
\item 
  there is $0<\alpha<1$ such that 
$
  f'(x)\approx   |x|^{-\alpha}
$,
for all $x\in I\setminus\{0\}$.

\end{enumerate}
It is  well-known  that  such a map  $f$ admits a unique ergodic SRB measure $\mu$; see e.g. \cite[Corollary 3.4]{V97a}.  Moreover, it was  proved in \cite[Theorem 1]{DHL06} that $\mu$ is given by  a Gibbs-Markov induced map. 
It is easily verified that $\mathcal P=\{(-1/2, 0),( 0,1/2)\}$ is a $\mu$ mod~0 generating partition. Since  $\mathcal P$ is   finite, we have   $H_\mu(\mathcal P)<\infty$, and so  $h_\mu(f,\mathcal P)<\infty$. It follows that $h_\mu(f)<\infty$, and  Theorem~\ref{th.formendo} then yields
$$h_{\mu}(f)=\int_I  \log |f'| d\mu.
$$

\subsection{Rovella maps}

Considering a geometric construction similar to that   in \cite{GW79}, reversing only a relation in the eigenvalues of the singularity, Rovella  introduced in \cite{R93} the so-called \textit{contracting Lorenz attractor}   and managed to show that the one-dimensional quotient map associated with the Poincaré return map  has chaotic behaviour, no longer robustly, but persistently in a measure theoretical sense: it is given by  a   family  of maps  $f_a: I\to I$ of the interval  $I= [-1,1 ]$ such that  $f_a$ has a positive Lyapunov exponent for   $a$ belonging in a   set $E\subset [0,1]$ with~$0$ as a full density point, such that 
\begin{enumerate}

\item 
$f_0$ is a   local diffeomorphism on  $ I \setminus \{0\}$ with 
$
f_0(0^+)=-1 $ and $ f_0(0^-)=1; 
$

\item 
 $\max_{x>0} f_0'(x)=f_0'(1)$  and $\max_{x<0} f_0'(x)=f_0'(-1)$;

\item 
$ f_0$ has negative Schwarzian derivative in $I\setminus
\{0\}$;

\item 
$\pm 1$ are  pre-periodic  repelling points: there are  $k_1, k_2, n_1, n_2\ge 1$ such that
\begin{enumerate}
\item  $f_0^{n_1+k_1}(1)=f_0^{k_1}(1)$ and $  (f_0^{n_1})'(f_0^{k_1}(1))>1;$
\item   $f_0^{n_2+k_2}(-1)=f_0^{k_2}(-1) $ and $(f_0^{n_2})'(f_0^{k_2}(-1))>1.$
\end{enumerate}

\end{enumerate}
Moreover,  for all  $a\in [0,1]$  
\begin{enumerate}
\setcounter{enumi}{4}

\item 
 there is $0<\alpha<1$ such that 
$
  f_a'(x)\approx   |x|^{\alpha}
$,
for all $x\in I\setminus\{0\}$;
\item 
$ f_a$ is $C^3$ in $ I \setminus \{0\}$ with derivatives depending continuously on $a$;

\item 
the functions $ a\mapsto f_a(-1) $  and $ a\mapsto f_a(1) $ have derivative 1 at $a=0$.

\end{enumerate}
%
Note that  $x= 0$ is  a discontinuity point for each $f_a$ and also a critical point:  the side derivatives are both equal to zero.
It was proved in  \cite{M00} that  for all $a\in E$, the map $f_a$ has some ergodic SRB measure,  and  it was shown  in  \cite[Corollary B]{AS12}  that the   SRB measure  is unique.  Moreover, it follows from  \cite[Theorem A]{AS12} and 
\cite[Theorem 4.1]{A04}   that the SRB measure $\mu_a$   is given by an induced Gibbs-Markov map for which  $m(\{R>n\})$ decays exponentially fast to zero with $n$. This in particular implies that $\int R^2 dm<\infty$.  Since $f'$ is bounded, it follows from Corollary~\ref{co.entform0} that, for each $a\in E$,
$$h_{\mu_a}(f_a)=\int_I  \log |f_a'| d\mu_a.
$$

\subsection{Intermittent   maps}\label{se.intermittent}


In this subsection, we consider a class of maps which contain as a particular case the    model  studied   by Liverani, Saussol and Vaienti in \cite{LSV99}. 
Given $ \alpha>0$, consider   the interval $I=[0,1]$, a point $ z_0\in(0,1)$  and a map $f:I\to I$, defined by
\begin{equation}\label{eq.LSVgen}
 f_\alpha (x)=
 \begin{cases}
 g_0(x), &\text {if $0\le x\le z_0$;}\\
 g_1(x), &\text {if $z_0< x\le1 $.}
 \end{cases}
\end{equation}
Assume  $g_1$ is a $C^2$ map with  derivative    strictly greater than one    mapping  $(z_0,1]$ diffeomorphically to $(0,1]$, 
and $g_0$ is  such that       
\begin{enumerate}
 \item 
    $f_\alpha(0)=0$ and $f_\alpha(z_0)=1$;
   \item 
   $f'_\alpha(0)=1$ and $f'_\alpha(x)>1$ for  $x\in (0,z_0]$;
 \item 
   $f_\alpha$ is $C^2$ on $(0,z_0]$  
  and 
  $f''_\alpha(x)\approx x^{\alpha-1}.$
\end{enumerate}
Note that   $f_\alpha$ is      $C^{1+\alpha}$   on $I\setminus\{z_0\}$, for each $0<\alpha<1$, with a discontinuity  at~$z_0$. 
The  model  in \cite{LSV99} is given by the particular choices  of 
 \begin{equation}\label{eq.LSV}
 z_0=\frac12,\quad g_0(x)=x +2^\alpha x^{\alpha+1}\qand g_1(x)=2x-1.
 \end{equation}
For $\alpha\ge 1$, the Dirac measure at zero is a physical measure for $f_\alpha$ and its basin covers $\leb$ almost all of $I$; see e.g. \cite[Theorem 3.59]{A20}. Therefore,  $f_\alpha$ has no SRB measure for $\alpha\ge 1$. 
For $0<\alpha<1$, the map $f_\alpha$ has a unique    SRB measure $\mu_\alpha$ given by a Gibbs-Markov induced map; see e.g. \cite[Theorem 3.59]{A20}. It is easily verified that $\mathcal P=\{(0,z_0),(z_0,1)\}$ is a $\mu_\alpha$ mod~0 generating partition. Since  $\mathcal P$ is   finite, we have   $H_{\mu_\alpha}(\mathcal P)<\infty$, and so  $h_{\mu_\alpha}(f_\alpha,\mathcal P)<\infty$. It follows that $h_{\mu_\alpha}(f_\alpha)<\infty$. Theorem~\ref{th.formendo} then yields
$$h_{\mu_\alpha}(f_\alpha)=\int_I  \log |f'_\alpha| d\mu_\alpha.
$$

\subsection{Singular intermittent maps}
Here, we consider a  family of     maps $f: S^1\to S^1$ of the circle $S^1=[-1,1]/\!\sim$, with $-1\sim 1$, introduced   by Cristadoro, Haydn, Marie and Vaienti  in~\cite{CHM10}, combining  the infinite derivative in  Section~\ref{se.singular} with the intermittency phenomenon in Section~\ref{se.intermittent}. The image  $f(x)$  depends on a parameter $\gamma>1$ and is defined implicitly for $x>0$ by 
$$
x=
\begin{cases}
\displaystyle
\frac1{2\gamma}(1+f(x)^\gamma, & \text{if } 0\le x\le \displaystyle\frac1{2\gamma};\\
f(x)+  \displaystyle\frac1{2\gamma}(1-f(x)^\gamma, &\text{if }  \displaystyle\frac1{2\gamma}< x\le 1.
\end{cases}
$$
and for $x<0$ by $f(x)=-f(-x)$. The  map $f$  is   $C^1$ on $S^1\setminus \{0\}$ and $C^2$ on $S^1\setminus \{0, 1\}$, with 
$$\lim_{x\to 0^\pm} f'(x)=+\infty.$$
  For the limit case    $\gamma=1$, this is the well-known doubling map, and for the particular value  $\gamma=2$, this is the map considered in \cite{AA04} as an example of a system with positive frequency of hyperbolic times but non-integrability of the first hyperbolic time. As observed in~\cite{CHM10}, the Lebesgue measure $m$  is $f$-invariant for all $\gamma>1$. Moreover, it is given  by an induced  Gibbs-Markov map; see~\cite[Section 3]{CHM10}.
It is easily verified that $\mathcal P=\{(-1, 0),( 0,1)\}$ is an $m$ mod~0 generating partition. Since  $\mathcal P$ is   finite, we have   $H_m(\mathcal P)<\infty$, and so  $h_m(f,\mathcal P)<\infty$. This implies that $h_m(f)<\infty$.   Theorem~\ref{th.formendo} then yields
$$h_{m}(f)=\int_{S^1}  \log |f'| dm.
$$

 \subsection{Skew-product intermittent maps}
In this subsection, we consider a family of two-dimensional piecewise smooth maps introduced by Bahsoun, Bose and Duan in \cite{BBD14}, based on a previous skew-product random map in~\cite{BBQ08}. Consider the interval $I=[0,1]$ and, for each $\alpha>0$, let $f_{\alpha}:I\to I$ be  defined   as in~\eqref{eq.LSVgen} with the particular choices of $z_0,g_0,g_1$ as  in~\eqref{eq.LSV}. 
Take real numbers $\alpha_0,\alpha_1,p_0,p_1>0$ such that $0<\alpha_0<\alpha_1\le 1$ and $p_0+p_1=1$.  
Consider  the $C^{1+\alpha_0}$ skew-product transformation 
$f:I\times I\to I\times I$ by 
$$
f(x,y)=( f_{\alpha(y)}(x),  \varphi(y)),
$$
where
$$
\alpha(y)=
\begin{cases}
\alpha_0, &\text{if $y\in[0,p_0)$};\\
\alpha_1, &\text{if $y\in[p_0,1]$;}
\end{cases}
\qand
\varphi(y)=
\begin{cases}
\dfrac{y}{p_1}, &\text{if $y\in[0,p_0)$};\\
\dfrac{y-p_0}{p_1}, &\text{if $y\in[p_0,1]$.}
\end{cases}
$$
Note that the discontinuity points of $f$ are given by  the lines $x=1/2$ and $y=p_0$. Therefore, $f$ has a partition into four smoothness domains. It follows from the results in~\cite[Section~3]{BBD14} that $f$ has a unique    SRB measure $\mu$ given by a Gibbs-Markov induced map. It is easily verified that 
$\mathcal P=\{(0,1/2)\times (0,p_0),(0,1/2)\times (p_0,1), ( 1/2,1)\times (0,p_0),( 1/2,1)\times (p_0,1)\}$
 is a $\mu $ mod~0 generating partition. Since  $\mathcal P$ is   finite, we have   $H_{\mu }(\mathcal P)<\infty$, and so  $h_{\mu }(f ,\mathcal P)<\infty$. It follows that $h_{\mu }(f )<\infty$. Theorem~\ref{th.formendo} then yields
$$h_{\mu }(f )=\int_{I\times I}  \log |f' | d\mu .
$$


\part{Systems with hyperbolic structures}\label{part2}

Let $M$ be a finite
dimensional compact Riemannian manifold $M$. Let  $\dist$   be   the distance on $M$ and $\leb$ be  the Lebesgue (volume) measure on the Borel sets of~$M$    induced by the Riemannian metric. 
Given a submanifold $\gamma\subset M$,   we use $\dist_\gamma$ to denote the distance on $\gamma$ and 
$\leb_\gamma$ to denote the Lebesgue measure on $\gamma$  induced by the restriction of the 
 Riemannian metric to~$\gamma$. 
 Let $f:M\to M$ be a  $C^{1+\eta}$ piecewise diffeomorphism, meaning that there is a countable number of pairwise disjoint open regions $M_1,  M_2,\dots$ 
such that
  $\cup_{k\ge1} \overline{M}_k=M$ and   $f\vert_{\cup_{k\ge1} M_k}$ is a  $C^{1+\eta}$ diffeomorphism onto its image.
  We  refer to
   $$S= M\setminus \cup_{k\ge1}M_k$$
   as the \emph{singular set} of $f$. 
Typically, $S$ can be  a set of critical   points or points where the derivative of $f$ does not exist (possibly, discontinuity points).

\section{Young sets} \label{section.GMY}


We say that $\Gamma$  is a \emph{continuous family of $C^1$ disks} in~$M$, if 
 there are a compact metric space~$K$, a unit disk $D$ in some $\mathbb R^k$ and an injective continuous function
 $\Phi\colon K\times D\to M$ 
 such~that
 \begin{itemize}
\item $\Gamma=\left\{\Phi(\{x\}\times D )\colon x\in K\right\}$;
\item $\Phi$ maps $K\times D$ homeomorphically onto its image;
\item $x\mapsto \Phi\vert_{\{x\}\times D}$ defines a continuous map from $K$ into $\emb^1(D,M)$, where $\emb^1(D,M)$ denotes the space of $C^1$  embeddings of $D$ into $M$.
\end{itemize}
Note that the disks in $\Gamma$ have all the same dimension  (of the disk $ D$)  denoted by $\dim\Gamma$.
  We say that a compact set  $\Lambda\subset M$ has a \emph{product
structure}\index{product structure}, if there are continuous families  of $C^1$  Pesin stable disks $\Gamma^s$ and   unstable disks $\Gamma^u$ such that
\begin{itemize}
    \item $\Lambda=(\cup_{\gamma\in\Gamma^s}
    \gamma)\cap (\cup_{\gamma\in\Gamma^u}
    \gamma)$;
    \item $\dim \Gamma^s+\dim \Gamma^u=\dim M$;
    \item each  $\gamma\in \Gamma^s$ meets each $\gamma\in\Gamma^u$  in exactly one point.
\end{itemize}
We say that $\Lambda_0\subset \Lambda$ is  an
{\em $s$-subset}\index{$s$-subset}, if $\Lambda_0$ has a   product
structure with respect to families $\Gamma_0^s$ and $\Gamma_0^u$
such that  $\Gamma_0^s\subset\Gamma^s$ and~$\Gamma_0^u=\Gamma^u$; {\em $u$-subsets}\index{$u$-subset} are defined similarly. 
Let $\gamma^{*}(x)$ denote the disk in 
$\Gamma^{*}$ containing the point $x\in\Lambda$, for~$*=s,u$. 
Consider the \emph{holonomy map}\index{holonomy map}
   $\Theta_{\gamma,\gamma'}\colon\gamma\cap\Lambda\to\gamma'\cap\Lambda$, defined  for   each $x\in\gamma\cap\Lambda$ by
\begin{equation}\label{eq.teta1}
\Theta_{\gamma,\gamma'}(x)=\gamma^s(x)\cap \gamma'.
\end{equation}
 We say that a compact set~$\Lambda$ is a \emph{Young set}   
  if~$\Lambda$ has a   product structure given by continuous families of $C^1$ disks $\Gamma^s$ and $\Gamma^u$ such that 
  conditions \ref{Y1}-\ref{Y5} below are satisfied.
\begin{enumerate}
    \item[\mylabel{Y1}{(Y$_1$)}]  {\em Markov}:\index{Markov} there is a sequence $(\Lambda_i)_{i\ge 1}$ of pairwise disjoint $s$-subsets of $\Lambda$ 
 such that
    \begin{itemize}
 \item    $\leb_{\gamma} (\Lambda  \cap\gamma )>0$ and $\leb_{\gamma} (\Lambda\setminus\cup_{i\ge1}\Lambda_i)\cap\gamma )=0$, for all $\gamma\in\Gamma^u$;
 \item  for all $i\ge1$, there is $R_i\in\mathbb N$ such that $f^{R_i}(\Lambda_i)$ is a $u$-subset and,   for all $x\in \Lambda_i$,
         $$
         f^{R_i}(\gamma^s(x))\subset \gamma^s(f^{R_i}(x))\qand
         f^{R_i}(\gamma^u(x))\supset \gamma^u(f^{R_i}(x));
         $$
      \item  for all $i\ge1$,   $0\le j\le R_i$ and $x\in\Lambda_i$,
$$
f^j(\gamma^s(x))\cap S=\emptyset\qand f^j(\gamma^u(x))\cap S=\emptyset.
$$
    \end{itemize}

\end{enumerate}
%
This allows us to introduce the \emph{recurrence time}\index{recurrence time}    $R:\Lambda\to\mathbb N$ and  the \emph{return
 map}\index{return!map} $f^R:\Lambda\to\Lambda$ of the Young set $\Lambda$,  setting for  each
$i\ge1$
 \begin{equation}\label{defRfR}
 R\vert_{\Lambda_i}=R_i\qand
f^R\vert_{\Lambda_i}=f^{R_i}\vert_{\Lambda_i}. 
 \end{equation}
Note that  
$R$ and $f^R$ are  defined on  a full~$\leb_\gamma$ measure subset  of $\Lambda\cap \gamma$,  for each $\gamma\in\Gamma^u$.  
%
%
%
%
%
 Thus, there exists  a set $ \Lambda^\prime\subset\Lambda$ intersecting each $\gamma\in \Gamma^u$ in a full $m_\gamma$ measure subset,   such that    $(f^R)^n(x)$ belongs in some   $\Lambda_i$, for all $n\ge0$ and  $x\in\Lambda'$.
Given 
  $x,y\in\Lambda'$,  we define the
\emph{separation time}\index{separation!time}
   $$s(x,y)=\min\big\{  n\ge 0 :\,\text{$ (f^R )^n(x)$ and $ (f^R)^n(y)$ lie in distinct  $ \Lambda_i$'s}\big\},
      $$
   with the convention that $\min( \emptyset)=\infty$. 
For definiteness, we set  the separation time equal to zero for all other points. 
For the remaining conditions, we consider  constants~$C>0$ and $0<\beta<1$,  only depending on $f$ and $\Lambda$.

 \begin{enumerate}
\item[\mylabel{Y2}{(Y$_2$)}]   
  \emph{Contraction on stable disks}:    for all  $\gamma\in \Gamma^s$ and  $x,y\in\gamma$, 
\begin{itemize}
\item $\displaystyle\dist (f^n  (y), f^n (x) )\le C \beta^n$,\, for all $n\ge0$.
 \end{itemize}
 \end{enumerate}

 \begin{enumerate}
\item[\mylabel{Y3}{(Y$_3$)}]
  \emph{Expansion on unstable disks}:   for  all  $ i\ge1$,  $\gamma\in \Gamma^u$   and  $x,y\in\gamma\cap \Lambda_i$,
  \begin{itemize}
\item   $  \dist( (f^R)^n (y), (f^R)^n (x))\le C\beta^{s(x,y)-n}$,\, for all $n\ge0$; 
\item  $\displaystyle\dist (f^j(y),f^j(x) )\le C\dist(f^R(x),f^R(y))$,\, for all $1\le j\le R_i$.
  \end{itemize}
 \end{enumerate}

\begin{enumerate}
 \item[\mylabel{Y4}{(Y$_4$)}]
 \emph{Gibbs}:  \index{Gibbs} for all   $i\ge1$, $\gamma\in \Gamma^ u$   and  $x,y\in \gamma\cap\Lambda_i$,  
  \begin{itemize}
  \item $\displaystyle\log\frac{\det Df^{R}\vert T_x\gamma}{\det Df^{R}\vert T_y\gamma}\le
    C\beta^{s(f^{R}(x),f^{R}(y))}.$
    \end{itemize}
 \end{enumerate}
\begin{enumerate}
    \item[\mylabel{Y5}{(Y$_5$)}] \emph{Regularity of the stable holonomy}: \index{stable holonomy!regularity} 
    \index{absolutely continuous!foliation}   for all  $\gamma,\gamma'\in\Gamma^u$, the measure $(\Theta_{\gamma,\gamma'})_*m_\gamma$ is absolutely continuous with respect to $\leb_{\gamma'}$ and its density $\rho_{\gamma,\gamma'}$ satisfies 
\begin{itemize}
\item $\displaystyle
 \frac1C\le  
 \rho_{\gamma,\gamma'} 
 \le C $;
 \item
 $\displaystyle \log\frac{\rho_{\gamma,\gamma'}(x)}{\rho_{\gamma,\gamma'}(y)}\le C \beta^{s(x,y)},
 $
 for all $x,y\in\gamma'\cap\Lambda$.
 \end{itemize}
\end{enumerate}
We   say that a Young set   has \emph{integrable recurrence times}\index{recurrence time!integrable} if~$R$ is integrable with respect to the measure $\leb_{\gamma}$, for some $\gamma\in\Gamma^u$. And so, for all   $\gamma\in\Gamma^u$, by   \ref{Y5}. In the next two results, we see the utility of  Young sets (with integrable recurrence times) for obtaining SRB measures;
 see \cite[Section 2.2]{Y98} or \cite[Theorem 4.7 \& Theorem 4.9]{A20} for a proof. In the present  context, by an \emph{SRB measure}\index{measure!SRB}  we mean  an invariant probability measure whose conditionals on local unstable leaves are absolutely continuous with respect to the conditional Lebesgue measure on those leaves.

\begin{theorem}\label{th.SRB} 
Assume that $f^R:\Lambda\to \Lambda$ is the return map of a Young set with   integrable recurrence times. Then,
\begin{enumerate}
\item $f^R$ has a  unique ergodic  SRB measure $\mu_0$;
\item $f$ has a unique ergodic SRB measure $\mu$ with $\mu(\Lambda)>0$,  which is given by  
 $$ 
{\mu}=\frac1{\sum_{j= 0}^\infty\mu_0(\{  R > j\})} \sum_{j=0}^\infty f^j_{*}(\mu_0|\{  R > j\}).
$$ 
\end{enumerate}
   \end{theorem}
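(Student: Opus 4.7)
The plan is to reduce the statement to the Gibbs-Markov situation already treated in Part~\ref{part1} by quotienting along stable disks, and then to build $\mu$ from the resulting SRB measure by the standard tower saturation formula.

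First, I would form the quotient $\bar\Lambda=\Lambda/\!\sim$ where $x\sim y$ iff $y\in\gamma^s(x)$, with projection $\pi_s:\Lambda\to\bar\Lambda$. The $s$-subsets $\Lambda_i$ project to a countable partition $\bar{\mathcal P}=\{\bar\Lambda_i\}$ of $\bar\Lambda$, and the Markov property~\ref{Y1} guarantees that $f^R$ descends to a well-defined map $\bar F:\bar\Lambda\to\bar\Lambda$ sending each $\bar\Lambda_i$ bijectively onto $\bar\Lambda$. Fix a reference leaf $\gamma^*\in\Gamma^u$ and transport $\leb_{\gamma^*}$ to a reference measure $\bar m$ on $\bar\Lambda$ via the identification $\bar\Lambda\leftrightarrow\gamma^*\cap\Lambda$; by~\ref{Y5}, any other choice of leaf would yield an equivalent measure with bounded density ratios, so this transfer is natural. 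The key verification is that $\bar F$ is a Gibbs-Markov map in the sense of \ref{G1}--\ref{G5}: property~\ref{G1} is immediate from~\ref{Y1}; \ref{G2} holds with Jacobian $J_{\bar F}$ induced by $|\det Df^R|_{T\gamma^u}|$ corrected by the holonomy density from~\ref{Y5}; \ref{G3}--\ref{G4} follow because the expansion on unstable disks \ref{Y3} together with the countable Markov partition eventually separates distinct points in $\bar\Lambda$; and \ref{G5} is precisely the combination of the distortion estimate~\ref{Y4} along unstables with the second bullet of \ref{Y5} for the holonomy correction, both being summable in $\beta^{s(\cdot,\cdot)}$. Integrability of $R$ with respect to $\bar m$ follows from integrability with respect to some $\leb_\gamma$ together with \ref{Y5}.

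Next, applying Theorem~\ref{th.SRB0} to $\bar F$ yields a unique ergodic $\bar F$-invariant probability measure $\bar\mu_0\ll\bar m$. I would then lift $\bar\mu_0$ to a probability measure $\mu_0$ on $\Lambda$ by disintegration: take $\mu_0$ to be the measure whose conditionals on each $\gamma\in\Gamma^u$ are absolutely continuous with respect to $\leb_\gamma$, with densities determined by $\bar\mu_0$ and the holonomies $\Theta_{\gamma^*,\gamma}$. The Gibbs property~\ref{Y4} and the bounded-density part of~\ref{Y5} make these densities uniformly comparable, so $\mu_0$ is a probability measure whose unstable conditionals are equivalent to $\leb_\gamma$. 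Invariance $f^R_*\mu_0=\mu_0$ reduces, on the level of unstable conditionals, to the change-of-variables formula together with~\ref{Y1} (the image of a stable disk lies inside a stable disk, while the image of an unstable disk contains a full unstable disk), and this is exactly encoded in $\bar F_*\bar\mu_0=\bar\mu_0$. Ergodicity is transferred: any $f^R$-invariant Borel set $A\subset\Lambda$ can be saturated by stable disks without changing its $\mu_0$-measure (because the conditionals along $\gamma^s$ play no role — the contraction~\ref{Y2} forces stable-disk asymptotics to be identified on the orbit level), so $A$ descends to a $\bar F$-invariant set in $\bar\Lambda$ of the same $\bar\mu_0$-measure, hence has measure $0$ or $1$. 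This completes item~(1), up to showing $\mu_0$ is SRB, which is built into the construction.

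For item~(2), I would set
\[
\mu=\frac{1}{\rho}\sum_{j=0}^\infty f^j_*(\mu_0|\{R>j\}),\qquad \rho=\sum_{j=0}^\infty\mu_0(\{R>j\})=\int R\,d\mu_0<\infty,
\]
where finiteness of $\rho$ uses integrability of $R$. Invariance $f_*\mu=\mu$ is then a direct telescoping computation: the mass at level $j$ of the formal tower shifts up to level $j+1$ for $j<R-1$, while the top layer $\{R=j+1\}$ is mapped by $f^R=f^{j+1}$ back to the base and reproduces $\mu_0$ by $f^R$-invariance. The unstable conditionals of $\mu$ at each $f^j(\{R>j\})$ are pushforwards of absolutely continuous measures by $C^{1+\eta}$ diffeomorphisms (no singularities are encountered thanks to the third bullet of~\ref{Y1}), so $\mu$ is SRB. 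Ergodicity of $\mu$ follows from ergodicity of $\mu_0$ under $f^R$ via the standard argument that any $f$-invariant set intersects $\Lambda$ in an $f^R$-invariant set. For uniqueness among SRB measures charging $\Lambda$, any ergodic SRB $\mu'$ with $\mu'(\Lambda)>0$ induces, by Kac's formula, an $f^R$-invariant SRB measure on $\Lambda$ which must coincide with $\mu_0$ by uniqueness in item~(1), and then $\mu'$ is forced to equal $\mu$ by the tower formula.

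The main obstacle is the quotient construction: verifying that $\bar F$ genuinely is a Gibbs-Markov map requires carefully combining \ref{Y4} (distortion of $\det Df^R$ along unstables) with \ref{Y5} (Lipschitz behaviour of the stable holonomy density) to produce a single Jacobian for $\bar F$ satisfying the exponentially contracted distortion bound \ref{G5}, and ensuring that the reference measure $\bar m$ is well-defined modulo the choice of $\gamma^*$. Once this reduction is in place, the invariance, ergodicity and uniqueness statements in both items are quick consequences of Theorem~\ref{th.SRB0} and the tower saturation formalism.
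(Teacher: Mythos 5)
Your outline captures the standard reduction — quotient along stable disks to obtain a Gibbs--Markov map, invoke Theorem~\ref{th.SRB0}, then saturate via the tower formula — and this is essentially the strategy behind the references \cite{Y98} and \cite{A20} that the paper cites for Theorem~\ref{th.SRB}, and behind the machinery the paper itself develops in Sections~\ref{sub.quotient}--\ref{sub.quotow}. The verification that the quotient return map is Gibbs--Markov, combining~\ref{Y4} with~\ref{Y5}, is also in line with Proposition~\ref{pr.F0Gibbs} and the Jacobian decomposition~\eqref{eq.jacompoe}. However, two steps in the proposal have genuine gaps.

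First, the lifting of $\bar\mu_0$ on the quotient to $\mu_0$ on $\Lambda$ is more delicate than ``densities determined by $\bar\mu_0$ and the holonomies.'' If you simply push $\bar\mu_0$ forward by $\Theta_{\gamma^*,\gamma}$ to obtain candidate conditionals on each $\gamma\in\Gamma^u$, there is no reason the resulting measure is $f^R$-invariant: the conditional densities that make the construction work are not arbitrary pushforwards but must satisfy a cocycle relation reflecting the entire forward orbit, typically realized by renormalizing $\leb_\gamma$ with an infinite product of the form $\prod_{i\ge 0}\det Df_u(f^i(\hat x))/\det Df_u(f^i(x))$ along unstable leaves. Only after this renormalization do the holonomies become \emph{measure preserving} (not merely absolutely continuous), so that the transverse-quotient and fibre-conditional structures glue consistently and yield an $f^R$-invariant SRB measure. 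Your sketch elides exactly this point, which is the technical core of the construction.

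Second, the uniqueness argument in item~(2) via Kac's formula does not work as stated. Kac's lemma produces the invariant measure of the \emph{first} return map to $\Lambda$, but $f^R$ is in general not a first return map: the recurrence time $R$ of a Young set is merely a sequence of Markov times, and orbits may re-enter $\Lambda$ strictly before time $R_i$. Consequently, normalizing $\mu'|_\Lambda$ gives an invariant measure for the first-return dynamics, not an $f^R$-invariant one, and the identification with $\mu_0$ by the uniqueness in item~(1) does not follow. Uniqueness of $\mu$ among ergodic SRB measures charging $\Lambda$ requires a different argument — e.g., lifting $\mu'$ to the tower and invoking uniqueness of the tower SRB measure, or using that basins of distinct ergodic SRB measures are essentially disjoint while the basin of $\mu$ has full Lebesgue measure in the unstable conditionals covering $\Lambda$. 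As written, this step would need to be replaced.
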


We will refer to  the measure $\mu$ as  the \emph{SRB measure given by the Young set $\Lambda$}.
It is not difficult to see that both $\mu_0$ and $\mu$ have $\dim \Gamma^s$ negative Lyapunov exponents and $\dim \Gamma^u$ positive Lyapunov exponents.
%
%
%
Bearing in mind the classical Oseledets Theorem, we define for $\mu$-almost (thus $\mu_0$-almost) every point $x\in M$ 
\begin{equation} \label{unstabjac}
Df_u(x)=  Df \vert_{E_x^{u}}\qand D{f^R_u}(x)=  Df^R |E_x^{u},
\end{equation}
where   $E_x^{u}$ is  the direct sum   of the subspaces in the Oseledets decomposition of $T_xM$ associated with the  positive Lyapunov exponents. Our main result  in this part   establishes in particular the classical Pesin entropy formula for  SRB measures given by Young sets; recall \cite[Proposition 2.5]{LS82}.

\begin{maintheorem} \label{entropy formula}
Let $f:M\to M$ be a   piecewise $C^{1+\eta}$ diffeomorphism 
with  an ergodic SRB measure~$\mu$ given by a Young set $\Lambda$ with recurrence time $R$. Then,
 \begin{enumerate}
\item  if $h_\mu(f)<\infty$, then
$$
h_\mu(f)=\int_M   \log |\det Df_u| \,  d\mu;
$$
\item $h_\mu(f)<\infty$ if, and only if, for some $\gamma_0\in\Gamma^u$,
  $$\int_{\gamma_0\cap\Lambda} R\,\log |\det D{f^R}\vert_{\gamma_0}| \,dm_{\gamma_0}<\infty . 
$$
\end{enumerate}
\end{maintheorem}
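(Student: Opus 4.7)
The strategy is to reduce Theorem~\ref{entropy formula} to the non-invertible case of Theorem~\ref{th.formendo} by compressing along the stable foliation, then apply the machinery of Part~\ref{part1} to a Gibbs--Markov quotient.

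First, I would fix a reference unstable disk $\gamma_0\in\Gamma^u$ and identify the quotient $\bar\Lambda:=\Lambda/\Gamma^s$ with $\gamma_0\cap\Lambda$ via the holonomies $\Theta_{\gamma,\gamma_0}$, equipped with the reference measure $\bar m=\leb_{\gamma_0}$. Because $f^R$ preserves the stable lamination by~\ref{Y1}, it descends to a well-defined quotient map $\bar F:\bar\Lambda\to\bar\Lambda$ with the projected countable partition $\bar{\mathcal P}=\{\bar\Lambda_i\}_{i\ge1}$ and the same recurrence function $R$. I would then check that $\bar F$ satisfies the Gibbs--Markov axioms \ref{G1}--\ref{G5}: \ref{G1} follows from the $u$-subset property in~\ref{Y1}; \ref{G4} (hence \ref{G3}) from the unstable expansion in~\ref{Y3}; the Jacobian $J_{\bar F}$ with respect to $\bar m$ coincides, up to a bounded holonomy correction provided by~\ref{Y5}, with $|\det Df^R|_{\gamma_0}|$, and its Gibbs estimate~\ref{G5} is the combination of~\ref{Y4} and the H\"older density bound in~\ref{Y5}.

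Once $\bar F$ is a Gibbs--Markov map with integrable recurrence times $R$, I would build the tower $T:\Delta\to\Delta$ exactly as in Section~\ref{towerextension} (which was written precisely for this level of generality). Every result in Sections~\ref{jacandnat}--\ref{se.original} then applies verbatim to $(T,\nu)$. In particular, Lemma~\ref{le.giniteq} gives
$$H_\nu(\mathcal Q)<\infty\iff \int_{\bar\Lambda}R\log J_{\bar F}\,d\bar m<\infty,$$
and the comparison $J_{\bar F}\asymp|\det Df^R|_{\gamma_0}|$ (via \ref{Y4}, \ref{Y5}, and the density bounds of $d\mu_0/d\leb_{\gamma_0}$) upgrades this to item~(2) of Theorem~\ref{entropy formula}. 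Furthermore, Proposition~\ref{entrinduce} gives $h_\nu(T)=\int_\Delta\log J_T\,d\nu$ whenever $H_\nu(\mathcal Q)<\infty$.

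The two remaining pieces are the identities $h_\mu(f)=h_\nu(T)$ and $\int\log J_T\,d\nu=\int\log|\det Df_u|\,d\mu$. For the second, I would follow Lemma~\ref{vvvvv} line by line, replacing $|\det Df|$ with $|\det Df_u|$ and using the chain rule $|\det Df^R_u|(x)=\prod_{j=0}^{R(x)-1}|\det Df_u(f^j x)|$ on unstable leaves, together with $\pi_*\nu=\mu$ via the natural semiconjugacy. The identity $h_\mu(f)=h_\nu(T)$ is the main obstacle, and the place where Part~\ref{part2} genuinely differs from Part~\ref{part1}. I would proceed in two steps: (a) build the ``uncollapsed'' tower $\hat T$ over the full return map $(f^R,\mu_0)$ on $\Lambda$, for which $\hat\pi(x,\ell)=f^\ell(x)$ is a countable-fiber semiconjugacy onto $(f,\mu)$, so Buzzi's proposition yields $h_\mu(f)=h_{\hat\nu}(\hat T)$ as in~\eqref{buzzi28}; (b) identify the natural collapsing $\hat T\to T$ with the quotient by the measurable stable partition $\xi^s$, and invoke a Ledrappier--Young/Abramov--Rohlin argument: since~\ref{Y2} yields uniform exponential contraction of stable disks, $\xi^s$ refines its own $\hat T$-preimage image and the conditional entropy $H_{\hat\nu}(\hat T^{-1}\xi^s\mid\xi^s)$ vanishes, so $h_{\hat\nu}(\hat T)=h_\nu(T)$. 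Verifying the Rohlin conditions for $\xi^s$ and the conditional entropy vanishing is the technical heart of the argument; once done, combining with Proposition~\ref{entrinduce} closes item~(1).
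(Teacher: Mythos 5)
Your overall skeleton is the same as the paper's: quotient the return map along stable leaves to a Gibbs--Markov map on $\gamma_0\cap\Lambda$, build the tower $(T,\nu)$, use Buzzi's countable-fiber result for $h_\mu(f)=h_{\hat\nu}(\widehat T)$ over the uncollapsed tower $\widehat T$, then transfer everything to $(T,\nu)$ and invoke Proposition~\ref{entrinduce} and Lemma~\ref{le.giniteq}. But there are two places where your proposal, as written, does not close.

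First, the identity $\int_\Delta\log J_T\,d\nu=\int_M\log|\det Df_u|\,d\mu$ cannot be obtained by ``following Lemma~\ref{vvvvv} line by line, replacing $|\det Df|$ with $|\det Df_u|$.'' In Part~I the tower Jacobian $J_T$ is exactly the determinant of $Df^R$; in Part~II the tower is built over the \emph{quotient} map $F$, whose Jacobian is, by~\eqref{eq.jacompoe}, $J_F=(\rho_{\gamma_1,\gamma_0}\circ f^R)\cdot|\det Df^R\vert_{\gamma_0}|$. The extra factor $\rho_{\gamma_1,\gamma_0}$ coming from the stable holonomy is not identically $1$, so $\log J_F$ and $\log|\det Df^R_u|$ differ by the term $\log(\rho_{\gamma_1,\gamma_0}\circ f^R)$. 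Showing this term drops out after integration is precisely the content of the paper's Lemma~\ref{le.last1}, which uses the chain rule for Jacobians of holonomy maps together with the $f^R$-invariance of $\mu_0$ to cancel a coboundary. Your plan omits this cancellation. A further problem with this step is the phrase ``$\pi_*\nu=\mu$'': the projection $\pi$ of~\eqref{eq.pi2} goes from $\widehat\Delta$ to $M$, while $\nu$ lives on the collapsed tower $\Delta$. One has $\pi_*\hat\nu=\mu$ and $\Theta_*\hat\nu=\nu$, and $\Theta$ goes the wrong way to define a map $\Delta\to M$ by composition, so $\pi_*\nu$ is not even defined in your setup. The paper's route is the chain $\int_\Delta\log J_T\,d\nu=\frac1\rho\int_{\gamma_0\cap\Lambda}\log J_F\,d\nu_0=\frac1\rho\int_\Lambda\log|\det Df^R_u|\,d\mu_0=\int_M\log|\det Df_u|\,d\mu$ (Lemmas~\ref{davids}, \ref{le.last1}, \ref{le.igual}), none of whose middle step is a line-by-line rerun of Lemma~\ref{vvvvv}.

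Second, for $h_{\hat\nu}(\widehat T)=h_\nu(T)$, you propose an Abramov--Rohlin / Ledrappier--Young argument showing the relative entropy over the stable partition vanishes. You correctly flag this as the technical heart, but the proposal leaves it as an assertion. The statement ``$\xi^s$ refines its own $\hat T$-preimage image and $H_{\hat\nu}(\hat T^{-1}\xi^s\mid\xi^s)$ vanishes'' is not the right criterion: vanishing of that particular conditional entropy is not what makes the factor have full entropy, and the hypotheses of the Ledrappier--Young machinery ($C^{1+\alpha}$ diffeomorphisms of compact manifolds) are not verified for the tower map $\widehat T$, which is not a smooth map of a manifold. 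The paper avoids this by passing to the natural extensions $\widehat T^{\#}$ and $T^{\#}$ (Rohlin's theorem gives $h_{\hat\nu}(\widehat T)=h_{\hat\nu^{\#}}(\widehat T^{\#})$ and $h_\nu(T)=h_{\nu^{\#}}(T^{\#})$) and then invoking the isomorphism of $(\widehat T^{\#},\hat\nu^{\#})$ and $(T^{\#},\nu^{\#})$ established in Appendix~B of Demers--Wright--Young, which is precisely the formalization of ``collapsing stable leaves does not lose entropy'' in this tower setting. If you wish to pursue the Abramov--Rohlin route, you would need to verify the measurability and generating hypotheses for the stable partition on $\widehat\Delta$ and then control the relative entropy; as written this remains a genuine gap rather than a proof.
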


  The strategy for proving the first item of Theorem~\ref{entropy formula} is slightly more involving than that for   Theorem~\ref{th.formendo}. We consider a tower extension $(\widehat T,\hat\nu)$  of $(f,\mu)$, but also a Gibbs-Markov quotient map  $F$ of $f^R$ with an invariant measure $\nu_0$ and  the tower system $(T,\nu)$ associated with~$(F,\nu_0)$, as is Section~\ref{towerextension}. We also consider  the natural extensions $(\widehat T^\#,\hat\nu^\#)$ and $(T^\#,\nu^\#)$ of $(\widehat T,\hat\nu)$ and $(T,\nu)$, respectively. Finally, we prove that, taking  $\rho>0$ as  in~\eqref{eq.rho}, we have  
\begin{align*}\label{eq.entros}
h_\mu&(f)=h_{\hat{\nu}}(\widehat{T})=h_{\hat{\nu}^{\#}}(\widehat{T}^{\#})=h_{ {\nu}^{\#}}( {T}^{\#})=h_{{\nu}}({T})\\
& =\int_\Delta  \log J_T \, d{\nu} =\frac1\rho   \int_{\gamma_0\cap\Lambda}   \log J_F \,  d \nu_0=\frac1\rho \int_{\Lambda}  \log  |\det Df^R_u|  \,d  \mu_0= \int_M\log |\det Df_u|\,d\mu.
\end{align*}
The first equality   is   a consequence
of   a  general result due to Buzzi for extensions with countably many fibers. 
The remaining equalities in the first line are a consequence of  
a result due to Demers, Wright and Young 
establishing that the measure preserving systems $(\widehat T^\#,\hat\nu^\#)$ and $(T^\#,\nu^\#)$  are isomorphic, 
and 
a result due to Rohlin  which establishes  that the entropy of the natural extension is equal to the entropy of the original system. The four equalities in the second line 
will be obtained in 
Proposition \ref{entrinduce},  
  Lemma~\ref{davids},  Lemma~\ref{le.last1} and Lemma~\ref{le.igual}, respectively.
%
%

Assuming that there is  $C>0$ for which $|\det Df_u |\le C$, 
 it follows from the chain rule   that
  $ |\det D{f^R\vert_{\gamma_0}}|\le C^R.$ We therefore   have the following simple, albeit  useful, consequence of Theorem~\ref{entropy formula}.

\begin{maincorollary} \label{co.entform1}
Let $f:M\to M$ be a   piecewise $C^{1+\eta}$ diffeomorphism 
with  an ergodic SRB measure~$\mu$  given by a Young set $\Lambda$ with recurrence time $R$.  If $| \det Df_u|$ is bounded   and ${\int_{\gamma_0\cap \Lambda}R^2 dm_{\gamma_0}<\infty}$ for some $\gamma_0\in\Gamma^u$, then
$$
h_\mu(f)=\int_M   \log |\det Df_u| \,  d\mu<\infty.
$$
\end{maincorollary}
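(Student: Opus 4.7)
The plan is to reduce the statement directly to Theorem~\ref{entropy formula} by means of a chain-rule estimate that converts the pointwise bound on $|\det Df_u|$ into the integrability condition appearing in the second item of that theorem. Without loss of generality I may assume $C\ge 1$ (replace $C$ by $\max\{C,1\}$ if necessary). For $m_{\gamma_0}$-almost every $x\in\gamma_0\cap\Lambda$, one has $x\in\gamma_0\cap\Lambda_i$ for some $i$, the tangent space $T_x\gamma_0$ coincides with the unstable Oseledets subspace $E_x^u$ (since $\gamma_0$ is a Pesin unstable disk within the Young-set framework), and by condition~(Y$_1$) the forward orbit segment $x,f(x),\dots,f^{R(x)-1}(x)$ avoids the singular set. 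The chain rule then yields
$$
\log\bigl|\det Df^R|_{T_x\gamma_0}(x)\bigr|=\sum_{j=0}^{R(x)-1}\log|\det Df_u(f^j x)|\le R(x)\log C.
$$

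Next, I would multiply by $R$ and integrate against $m_{\gamma_0}$. Using the standing hypothesis $\int_{\gamma_0\cap\Lambda}R^2\,dm_{\gamma_0}<\infty$, this gives
$$
\int_{\gamma_0\cap\Lambda}R\,\log\bigl|\det Df^R|_{T\gamma_0}\bigr|\,dm_{\gamma_0}\le(\log C)\int_{\gamma_0\cap\Lambda}R^2\,dm_{\gamma_0}<\infty.
$$
The second item of Theorem~\ref{entropy formula} then yields $h_\mu(f)<\infty$, and the first item of the same theorem produces the entropy formula $h_\mu(f)=\int_M\log|\det Df_u|\,d\mu$. Finiteness of the right-hand side is automatic, since its integrand is bounded from above by $\log C$ and $\mu$ is a probability measure.

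All the substantive work has already been carried out in Theorem~\ref{entropy formula}; here the argument reduces to a short chain-rule computation together with the elementary inequality $R\log|\det Df^R|\le R^2\log C$. Accordingly, no real obstacle is anticipated. The only minor subtlety worth noting is the identification $T_x\gamma_0=E_x^u$ used in the chain-rule step, which is immediate from the definition of $\Gamma^u$ as a continuous family of Pesin unstable disks in the Young-set construction.
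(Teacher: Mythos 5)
Your proof is correct and follows essentially the same route as the paper: the chain rule along $\gamma_0$ gives $|\det Df^R\vert_{\gamma_0}|\le C^R$, so $R\log|\det Df^R\vert_{\gamma_0}|\le R^2\log C\in L^1(m_{\gamma_0})$, and then the two items of Theorem~\ref{entropy formula} deliver both the finiteness and the formula. The extra details you supply (normalizing $C\ge 1$, the identification $T_x\gamma_0=E^u_x$, and the observation that $\log|\det Df_u|\le\log C$ bounds the right-hand side) are all harmless elaborations of what the paper sketches in a single line.
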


In Section~\ref{se.systemno2}, we provide   an example of a  piecewise~$C^\infty$diffeomorphism~$f$  with an SRB  measure~$\mu$ with infinite entropy given by a Young set for which
the   formula in the  first item of Theorem~\ref{entropy formula} is no longer valid.   In Section~\ref{se.app2}, we apply   Theorem~\ref{entropy formula} and Corollary~\ref{co.entform1} to some classes of piecewise  smooth  diffeomorphisms with nonempty singular sets. 

\section{Quotient return map}\label{sub.quotient}
In this section, we introduce a \textit{quotient map}   associated with the return map of a set  with a Young structure, by  collapsing  stable leaves.  Let   $\Lambda$ be a Young set  with return map  $f^R:\Lambda\to\Lambda$. 
Fixing some $\gamma_0\in\Gamma^u$, consider   $\Theta_{\gamma_0}:\Lambda\to \gamma_0\cap\Lambda$, setting  for each $x\in\Lambda$  
 \begin{equation}\label{eq.teta2}
 \Theta_{\gamma_0}(x)= \gamma^s(x)\cap \gamma_0. 
\end{equation} 
 We  define  the \emph{quotient map}\index{quotient!map} of   $f^R$  as 
%
%
\begin{equation}\label{eq.F01}
 \begin{array}{rcccl}
F&\colon & \gamma_0\cap \Lambda & \longrightarrow &
\gamma_0\cap \Lambda
 \\
& &x&\longmapsto & \Theta_{ \gamma_0}\circ f^R (x).
 \end{array}
 \end{equation}
 It is easily verified that  
  \begin{equation}\label{eq.semifRF}
F \circ\Theta_{\gamma_0} = \Theta_{\gamma_0}\circ f^R .
 \end{equation}
 To simplify notation,  the restriction of   $m_{\gamma_0 }$ to $\gamma_0\cap\Lambda$ will still be denoted by $m_{\gamma_0}$.
The proof of the next result is given in  \cite[Proposition~4.2]{A20}. 


\begin{proposition}\label{pr.F0Gibbs}
The quotient map $F:\gamma_0\cap \Lambda\to \gamma_0\cap \Lambda$ is Gibbs-Markov with respect to the $m_{\gamma_0}$ mod $0$ partition $\mathcal{P}=\{\gamma_0\cap \Lambda_1, \gamma_0\cap \Lambda_2, \dots \}$ of $\gamma_0 \cap \Lambda$. 
\end{proposition}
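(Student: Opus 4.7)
My plan is to verify the five Gibbs-Markov conditions \ref{G1}--\ref{G5} in turn, with a common thread being the observation that the restriction of $\Theta_{\gamma_0}$ to $\gamma_0\cap\Lambda$ is the identity, so that \eqref{eq.semifRF} gives $F^n\vert_{\gamma_0\cap\Lambda}=\Theta_{\gamma_0}\circ(f^R)^n\vert_{\gamma_0\cap\Lambda}$ for every $n\ge1$. Since each $\Lambda_j$ is an $s$-subset, membership in $\Lambda_j$ depends only on the stable leaf through the point, so $(f^R)^n(x)\in\Lambda_j$ iff $F^n(x)\in\gamma_0\cap\Lambda_j$. This will identify the Gibbs-Markov separation time of $F$ (in the sense of \eqref{eq.separa}) with the Young-set separation time of $f^R$, a fact I intend to use repeatedly.

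For \ref{G1} I would fix $i\ge1$ and set $\gamma_i=f^{R_i}(\gamma_0)$, which by the Markov part of \ref{Y1} contains the $\Gamma^u$-disk through any $f^{R_i}(x)$ with $x\in\Lambda_i$. Then $f^{R_i}$ maps $\gamma_0\cap\Lambda_i$ diffeomorphically into $\gamma_i\cap\Lambda$, and composing with the stable holonomy $\Theta_{\gamma_0}$ (which bijects stable leaves of $\Lambda$ with their intersections with $\gamma_0$) yields a bijection $\gamma_0\cap\Lambda_i\to\gamma_0\cap\Lambda$ mod zero. For \ref{G2}, given a measurable $A\subset\gamma_0\cap\Lambda_i$, I would combine the change of variables for the $C^{1+\eta}$ diffeomorphism $f^{R_i}\vert_A$ with the absolute continuity of the stable holonomy from \ref{Y5} to compute
$$
m_{\gamma_0}(F(A))=\int_{f^{R_i}(A)}\rho_{\gamma_0,\gamma_i}\,dm_{\gamma_i}=\int_A\rho_{\gamma_0,\gamma_i}(f^R(x))\,\bigl|\det Df^R\vert_{T_x\gamma_0}\bigr|\,dm_{\gamma_0}(x),
$$
so that $J_F(x)=\rho_{\gamma_0,\gamma_i}(f^R(x))\cdot\bigl|\det Df^R\vert_{T_x\gamma_0}\bigr|$, which is positive a.e.\ thanks to the uniform bound on $\rho_{\gamma_0,\gamma_i}$ in \ref{Y5} and the invertibility of $f^R$.

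For the separating condition \ref{G4}, I would argue that two distinct points $x,y\in\gamma_0\cap\Lambda$ belonging to the same atom of $\mathcal P_n$ for every $n\ge1$ would have $s(x,y)=\infty$; the unstable expansion estimate in \ref{Y3}, specialised to $\gamma_0$, would then force $\dist_{\gamma_0}(x,y)\le C\beta^n$ for every $n$, a contradiction. The generating property \ref{G3} is then a standard measure-theoretic consequence of \ref{G4}: the countable measurable partitions $\mathcal P_n$ form a refining sequence whose join is the partition into points, and hence generate the Borel $\sigma$-algebra of $\gamma_0\cap\Lambda$ mod zero.

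The main technical step, and the one I expect to be the main obstacle in the bookkeeping, is \ref{G5}. For $x,y\in\gamma_0\cap\Lambda_i$, I would decompose
$$
\log\frac{J_F(x)}{J_F(y)}=\log\frac{\rho_{\gamma_0,\gamma_i}(f^R(x))}{\rho_{\gamma_0,\gamma_i}(f^R(y))}+\log\frac{\bigl|\det Df^R\vert_{T_x\gamma_0}\bigr|}{\bigl|\det Df^R\vert_{T_y\gamma_0}\bigr|},
$$
and bound the first summand by $C\beta^{s(f^R(x),f^R(y))}$ via the second bullet of \ref{Y5} applied at points of $\gamma_i\cap\Lambda$, and the second summand by the same quantity via \ref{Y4} with $\gamma=\gamma_0$. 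The delicate point is that both estimates use the Young-set separation time of the pair $f^R(x),f^R(y)$, while \ref{G5} calls for the Gibbs-Markov separation time of $F(x),F(y)$; these coincide by the opening observation, so summing yields the desired Gibbs estimate with constant $2C$.
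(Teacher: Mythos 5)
Your argument is correct and is the natural route: the paper itself does not reproduce the proof but defers to \cite[Proposition~4.2]{A20}, only recording the Jacobian identity \eqref{eq.jacompoe}, and your verification of \ref{G1}--\ref{G5} via the semiconjugacy \eqref{eq.semifRF}, the identification of the Gibbs--Markov separation time of $F$ with the Young separation time of $f^R$ (using that each $\Lambda_i$ is an $s$-subset), the change-of-variables computation of $J_F$, and the decomposition of the Gibbs estimate into a holonomy term (\ref{Y5}) plus an unstable-determinant term (\ref{Y4}) is exactly what one expects that reference to contain. One minor remark: the formula you obtain, $J_F(x)=\rho_{\gamma_0,\gamma_i}(f^R(x))\,\lvert\det Df^R\vert_{T_x\gamma_0}\rvert$, is the type-correct one under the convention of \ref{Y5} (where $\rho_{\gamma,\gamma'}$ is a function on $\gamma'$), whereas the paper's displayed formula \eqref{eq.jacompoe} writes $\rho_{\gamma_1,\gamma_0}\circ f^R$, which would place a function defined on $\gamma_0$ at the point $f^R(x)\in\gamma_1$; your version is the consistent one.
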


The  proof of  \cite[Proposition~4.2]{A20} gives that, for all  $\omega\in\mathcal P$,
\begin{equation}\label{eq.jacompoe}
J_{F}\vert_\omega=(\rho_{\gamma_1,\gamma_0}\circ f^R) \cdot |\det Df^{R}\vert_{\gamma_0}|, 
\end{equation}
with  $\gamma_1\in\Gamma^u$ such that $f^R(\omega)\subset\gamma_1$ and $\rho_{\gamma_1,\gamma_0}$ as in~\ref{Y5}.  
 By Theorem~\ref{th.SRB0},   the quotient map $F$ has a unique ergodic  invariant probability measure  absolutely continuous with respect to~$m_{\gamma_0}$. The next result relates this measure   to  the unique SRB measure given by Theorem~\ref{th.SRB} for the return map of  the Young set; see \cite[Lemma 4.5]{A20} for a proof.

\begin{proposition}\label{le.SRBproj} Let   $\Lambda$ be a Young set   and $F:\gamma_0\cap \Lambda\to \gamma_0\cap \Lambda$  be  a quotient  of the return map $f^R:\Lambda\to\Lambda$. 
If $\mu_0$ is the unique SRB measure for $f^R$, then
  $\nu_0=(\Theta_{\gamma_0})_*\mu_0$ is the unique  $F$-invariant probability measure such that  $\nu_0\ll\leb_{\gamma_0}$.
  \end{proposition}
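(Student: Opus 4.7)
The plan is to verify three things in turn: $F$-invariance of $\nu_0$, absolute continuity $\nu_0\ll\leb_{\gamma_0}$, and uniqueness.

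\emph{Invariance.} This is immediate from the semiconjugacy \eqref{eq.semifRF}. Since $\mu_0$ is $f^R$-invariant,
\begin{equation*}
F_{*}\nu_0 = F_{*}(\Theta_{\gamma_0})_{*}\mu_0 = (\Theta_{\gamma_0})_{*}(f^R)_{*}\mu_0 = (\Theta_{\gamma_0})_{*}\mu_0 = \nu_0.
\end{equation*}

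\emph{Absolute continuity.} The argument relies on combining the SRB property of $\mu_0$ with the absolute continuity of the stable holonomy given by \ref{Y5}. First, I would invoke the Rokhlin disintegration of $\mu_0$ subordinate to the partition of $\Lambda$ into the disks of $\Gamma^u$, writing $\mu_0=\int \mu_0^{\gamma}\, d\hat\mu_0(\gamma)$ with $\mu_0^{\gamma}\ll \leb_\gamma$ by the SRB property. Now let $A\subset \gamma_0\cap\Lambda$ satisfy $\leb_{\gamma_0}(A)=0$. For each $\gamma\in\Gamma^u$, unwinding definitions gives $\Theta_{\gamma_0}^{-1}(A)\cap\gamma = \Theta_{\gamma,\gamma_0}^{-1}(A)$, where $\Theta_{\gamma,\gamma_0}$ is the holonomy map from \eqref{eq.teta1}. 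By \ref{Y5}, $(\Theta_{\gamma,\gamma_0})_{*}\leb_\gamma$ has density $\rho_{\gamma,\gamma_0}$ bounded above and below by positive constants with respect to $\leb_{\gamma_0}$, so $\leb_\gamma(\Theta_{\gamma,\gamma_0}^{-1}(A))=0$, and hence $\mu_0^{\gamma}(\Theta_{\gamma,\gamma_0}^{-1}(A))=0$. Integrating against $\hat\mu_0$ yields
\begin{equation*}
\nu_0(A)=\mu_0(\Theta_{\gamma_0}^{-1}(A))=\int \mu_0^{\gamma}\bigl(\Theta_{\gamma,\gamma_0}^{-1}(A)\bigr)\, d\hat\mu_0(\gamma)=0,
\end{equation*}
giving $\nu_0\ll\leb_{\gamma_0}$.

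\emph{Uniqueness.} By Proposition~\ref{pr.F0Gibbs}, $F$ is a Gibbs-Markov map on $(\gamma_0\cap\Lambda,\leb_{\gamma_0})$ with respect to the partition $\mathcal P=\{\gamma_0\cap\Lambda_i\}_{i\ge1}$. Standard theory for Gibbs-Markov maps (the same mechanism underlying Theorem~\ref{th.SRB0}(1)) guarantees that such a map admits a unique ergodic invariant probability measure absolutely continuous with respect to the reference measure. Since $\nu_0$ is one such measure by the previous two paragraphs, it must be the unique one.

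\emph{Main obstacle.} The only delicate step is the absolute continuity, where one must correctly identify $\Theta_{\gamma_0}^{-1}(A)\cap\gamma$ with $\Theta_{\gamma,\gamma_0}^{-1}(A)$ and then transport $\leb_{\gamma_0}$-null sets back to each unstable leaf through the holonomy. Both ingredients are supplied cleanly: the SRB property of $\mu_0$ provides the disintegration into leafwise Lebesgue-absolutely continuous measures, and condition \ref{Y5} gives the required two-sided comparison between holonomy-pushed Lebesgue measures on different unstable disks.
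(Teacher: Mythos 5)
Your proof is correct and follows the natural route: invariance via the semiconjugacy \eqref{eq.semifRF}, absolute continuity by disintegrating the SRB measure $\mu_0$ along the unstable disks and transporting $\leb_{\gamma_0}$-null sets through the holonomy using \ref{Y5}, and uniqueness from the Gibbs-Markov property of $F$ established in Proposition~\ref{pr.F0Gibbs}. The paper itself only defers to an external reference (\cite[Lemma 4.5]{A20}) for this proposition; one small imprecision in your write-up is that you invoke uniqueness of the \emph{ergodic} acip without first showing $\nu_0$ is ergodic, but this is harmless since the standard Gibbs-Markov theory (exactly the mechanism used for Theorem~\ref{th.measuretower}) actually gives uniqueness of the acip outright, without the ergodicity qualifier.
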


This last result, together with~\eqref{eq.semifRF}, shows  that $(f^R,\mu_0)$ is an \emph{extension} of $(F,\nu_0)$.

\section{Tower extension}\label{hatower}


As in  Section~\ref{towerextension},  consider the \emph{tower}\index{tower} associated with $f^R:\Delta\to\Delta$,
 $$\hat\Delta=\big\{ (x,\ell)\colon \text{$x
\in \Lambda$ and $0\leq \ell < R(x)$} \big\},$$ 
 and the   \emph{tower map}\index{tower!map}  $\widehat T:\hat \Delta\to\hat\Delta$, given  by
 $$
\widehat T(x,\ell)= 
\begin{cases}
    (x,\ell+1), & \hbox{if $
\ell<R(x)-1$;} \\
    (f^R(x),0), & \hbox{if $\ell=R(x)-1$.} \\
\end{cases}%
 $$
As before, the base $\hat\Delta_0$ of the tower $\hat\Delta$  is naturally identified with the set    $\Lambda$, and each level~$\hat\Delta_\ell$ with the  set   $\{R>\ell\}\subset \Lambda$. This allows us to refer to stable and unstable disks through  points in the tower, naturally considering the corresponding disks of their representatives in the base. 
Also, the set $\Lambda$ is identified with the base level $\hat \Delta_0$ and the return map $f^R:\Lambda\to\Lambda$ is identified with the return to the base $T^R:\hat\Delta_0\to\hat\Delta_0$


We have seen in Theorem~\ref{th.SRB}  that the return map $f^R$ has a unique  SRB measure. In Theorem~\ref{th.SRBtower} below, we show that this SRB measure gives rise to a unique ergodic SRB measure for $\widehat T$.
Since each  level of the tower is  identified with a subset of the set with a product structure, it still makes sense to  talk about  SRB measures   for the  tower map $\widehat T$. 
Considering, as  in Section~\ref{towerextension}, the map
  \begin{equation}\label{eq.pi2}
 \begin{array}{rccl}
 \pi \colon &\!\!\!\!  \hat \Delta & \longrightarrow &
 M
 \\
& \!\!(x,\ell)&\longmapsto & f^\ell(x),
 \end{array}
 \end{equation}
we  also have
\begin{equation}\label{eq.cnjugar2}
f\circ \pi = \pi\circ \widehat T.
\end{equation}
The next result gives in particular that the push-forward under $\pi$ of the unique ergodic SRB measure for the tower map coincides with the ergodic SRB measure given by Theorem~\ref{th.SRB}.  A proof of this  result can be found in~\cite[Theorem 4.11]{A20}.

\begin{theorem}\label{th.SRBtower}
Let  $\widehat T$ the tower map associated with  the  return map $f^R$ of a Young  set~$\Lambda$ with  
with  integrable recurrence times. If $\mu_0$ is the  unique    SRB measure for~$f^R$, then 
$$ 
\hat{\nu}=\frac1{ \sum_{j=0}^{\infty}  \mu_0(\{  R > j\})} \sum_{j=0}^{\infty}\widehat T^j_{*}( \mu_0|\{  R > j\})
$$
is the  unique     SRB measure for  $\widehat T$. Moreover, $\hat\nu$ is ergodic and $\mu= \pi_*\hat\nu$ is the  unique    SRB measure for $f$ with $\mu(\Lambda)>0$. 
\end{theorem}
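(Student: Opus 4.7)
My plan is to treat $\hat\nu$ as the Kakutani-style suspension of $\mu_0$ and verify, in turn, well-definedness, $\widehat T$-invariance, the SRB property, and ergodicity/uniqueness, then transfer everything to $f$ via the semiconjugacy $\pi$. This mirrors the strategy already used for Theorem~\ref{th.measuretower} in the non-invertible case, but the absolute-continuity-on-unstable-leaves requirement is new.

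First I would show that $R$ is $\mu_0$-integrable, so that the normalising constant $\rho=\sum_{j\ge 0}\mu_0(\{R>j\})=\int R\,d\mu_0$ is finite. Since $\mu_0$ is SRB for $f^R$, its conditionals on the unstable disks $\gamma\in\Gamma^u$ are absolutely continuous with respect to $\leb_\gamma$ with densities bounded above and below (a standard output of Young's construction); the assumed integrability of $R$ with respect to $\leb_{\gamma_0}$ on $\gamma_0\cap\Lambda$, combined with the bounded stable-holonomy densities from \ref{Y5}, then yields $R\in L^1(\mu_0)$. Invariance of $\hat\nu$ under $\widehat T$ is then a telescoping computation: applying $\widehat T_*$ to each summand and using the identification of $\widehat T|_{\hat\Delta_\ell\cap\{R=\ell+1\}}$ with $f^R$ together with the $f^R$-invariance of $\mu_0$, the summands reshuffle to recover $\hat\nu$. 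Ergodicity follows from ergodicity of $\mu_0$ under $f^R$ on the base, which here is identified with $\hat\Delta_0$: a $\widehat T$-invariant set restricts to an $f^R$-invariant set on the base, hence has $\mu_0$-measure $0$ or $1$, and this propagates through the levels via the $\widehat T^\ell_*$'s.

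The SRB property for $\hat\nu$ is the main geometric content. One has to endow each level $\hat\Delta_\ell$ with a family of local unstable disks (the natural copies of the unstable disks of $\Lambda\cap\{R>\ell\}$) and check that the conditionals of $\hat\nu$ along them are absolutely continuous with respect to the induced Lebesgue measure. On $\hat\Delta_0$ this is exactly the SRB property of $\mu_0$. For $\ell\ge 1$, the measure on $\hat\Delta_\ell$ is $\rho^{-1}\widehat T^\ell_*(\mu_0|\{R>\ell\})$, and since between returns $\widehat T$ acts purely as an upward translation in the tower coordinate, it identifies the unstable-disk structure on $\hat\Delta_\ell$ isometrically with that on $\{R>\ell\}\subset\hat\Delta_0$, preserving absolute continuity of conditionals.

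For the final statement, $f$-invariance of $\mu=\pi_*\hat\nu$ follows at once from the semiconjugacy \eqref{eq.cnjugar2}. To see that $\mu$ is SRB, note that on each level $\hat\Delta_\ell$ the projection $\pi$ is $f^\ell$, which maps the local unstable disks of $\Lambda\cap\{R>\ell\}$ diffeomorphically onto local unstable disks of $f$; the absolutely continuous conditionals of $\hat\nu$ therefore push forward to absolutely continuous conditionals of $\mu$. Since $\pi|_{\hat\Delta_0}$ is the identification with $\Lambda$, we have $\mu(\Lambda)\ge\hat\nu(\hat\Delta_0)>0$. Uniqueness of the SRB measure for $f$ charging $\Lambda$ follows by reversing the construction: any such ergodic SRB measure induces, via the inducing scheme given by $R$, an $f^R$-invariant SRB measure on $\Lambda$ that must coincide with $\mu_0$ by Theorem~\ref{th.SRB}, and hence its tower lift is forced to be $\hat\nu$.

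The main obstacle is the verification of the SRB property level by level: correctly describing the family of local unstable disks through $\hat\Delta_\ell$, checking measurability of the corresponding partition, and confirming that the disintegration of $\widehat T^\ell_*(\mu_0|\{R>\ell\})$ along these disks is genuinely Lebesgue-absolutely continuous. Once this geometric bookkeeping is in place, the measure-theoretic assertions (invariance, ergodicity, uniqueness, and the passage to $\mu=\pi_*\hat\nu$) fall out by standard tower arguments.
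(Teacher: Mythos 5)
The paper does not actually prove this theorem; it defers to \cite[Theorem~4.11]{A20}. Your outline follows the natural strategy and most of its steps are sound: the integrability of $R$ with respect to $\mu_0$ indeed reduces, via Proposition~\ref{le.SRBproj} and the fact that $R$ is constant along stable leaves, to integrability of $R$ with respect to $\nu_0=(\Theta_{\gamma_0})_*\mu_0$, and the latter has density bounded above and below with respect to $m_{\gamma_0}$; the telescoping computation of $\widehat T$-invariance is correct; and the level-by-level SRB verification works precisely because $\widehat T^\ell$ acting from $\hat\Delta_0\cap\{R>\ell\}$ to $\hat\Delta_\ell$ is an identification (not merely a diffeomorphism) under the conventions of Section~\ref{hatower}, so it preserves unstable disks and their conditionals verbatim.

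There are two gaps. First, the theorem asserts uniqueness of $\hat\nu$ as an SRB measure for $\widehat T$, but you only argue uniqueness for $f$ charging $\Lambda$. The $\widehat T$-uniqueness needs its own (short) argument: any ergodic SRB measure $\hat\nu'$ for $\widehat T$ restricts, after normalization, to an $f^R$-invariant SRB measure on $\hat\Delta_0\cong\Lambda$, which by Theorem~\ref{th.SRB}(1) must be $\mu_0$; the Kakutani--Rohlin correspondence then forces $\hat\nu'=\hat\nu$. Second, the ``reverse construction'' you invoke for $f$-uniqueness -- that an ergodic SRB measure for $f$ charging $\Lambda$ induces an $f^R$-invariant \emph{SRB} measure on $\Lambda$ -- is a nontrivial claim (one must check that inducing preserves absolute continuity of unstable conditionals), and you do not argue it. It is also unnecessary: uniqueness for $f$ with $\mu(\Lambda)>0$, together with an explicit formula for $\mu$, is already the content of Theorem~\ref{th.SRB}(2). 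Since $\pi\circ\widehat T^j\vert_{\hat\Delta_0}=f^j$ and $\pi\vert_{\hat\Delta_0}$ is the identity, one computes directly
$$
\pi_*\hat\nu=\frac{1}{\rho}\sum_{j\ge0}f^j_*\bigl(\mu_0\vert\{R>j\}\bigr),
$$
which is exactly the formula in Theorem~\ref{th.SRB}(2). That one-line identification, plus the SRB verification for $\hat\nu$ that you already carry out, is all that is needed for the last assertion of the theorem.
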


Together with~\eqref{eq.cnjugar2}, this means  that   the tower system $(\widehat T,\hat\nu)$ is an \emph{extension} of   $(f,\mu)$.

\section{Quotient tower}\label{sub.quotow}
Here,  we analyse the relationship between  the tower associated with the return map   and  the tower associated with  the quotient of the return map. 
Fix  some  $\gamma_0\in \Gamma^u $  and consider the quotient map   $$F:\gamma_0\cap \Lambda\to \gamma_0\cap \Lambda,$$ as in~\eqref{eq.F01}. 
By  Proposition~\ref{pr.F0Gibbs},    $F$ is   a Gibbs-Markov map  with respect to the $\leb_{\gamma_0} $ mod~0  partition $\mathcal P=\{\gamma_0\cap \Lambda_1,\gamma_0\cap \Lambda_2, \dots\}$       of~$\gamma_0\cap \Lambda$.   
Notice that
 \begin{equation}\label{eq.Rcompatible}
R\vert_{\gamma_0\cap\Lambda_i}=R\vert_{ \Lambda_i}=R_i,
 \end{equation}
Since $R$ is constant in the elements of $\mathcal P$, we can   consider the tower map $$T:\Delta\to\Delta$$ of the Gibbs-Markov map $F$ with recurrence time $R$. As before, we still denote   the reference measure on $\Delta$ by $m_{\gamma_0}$. 
 Since  $\gamma_0\cap\Lambda\subset\Lambda$, it   follows that, for all $\ell\ge0$,
 \begin{equation}\label{eq.hatell}
\Delta_\ell\subset \hat\Delta_\ell \qand T\vert_{\Delta_\ell}=\widehat T\vert_{\Delta_\ell},
\end{equation}
with $\hat\Delta_\ell$ and $\widehat T$ as in the beginning of this Section~\ref{hatower}.
Hence, it makes sense to consider   the map  
 \begin{equation}\label{eq.tribalistas}
 \begin{array}{rcccl}
 \Theta&\colon &  \hat\Delta & \longrightarrow &
 \Delta
 \\
& &(x,\ell)&\longmapsto & \left(\Theta_{\gamma_0}(x),\ell\right).
 \end{array}
 \end{equation}
It is straightforward to check that 
 \begin{equation}\label{eq.leozinho}
\widehat T\circ\Theta=\Theta\circ T,
\end{equation}
 thus  $\Theta$  being  a  semiconjugacy between the tower maps $\widehat T$ and $ T$. 
  Observe that $\Theta$ is not countable-to-one, and so we cannot invoke   \cite[Proposition 2.8]{B99b} to obtain  $h_{\hat{\nu}}(\widehat{T})=h_{{\nu}}({T})$. This will be deduced later, by mean of  natural extensions.
 The next result shows that     $(\widehat T,\hat\nu)$ is in fact an extension of $(T,\nu)$;   
see \cite[Proposition 4.13]{A20} for a proof.

\begin{proposition}\label{le.projmeasuredif}
If $\hat\nu$ is the ergodic SRB measure for $\widehat T$, then $ \Theta_*\hat\nu$ is the unique ergodic $T$-invariant probability measure absolutely continuous with respect to $ \leb_{\gamma_0}$.
\end{proposition}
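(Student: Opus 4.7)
The strategy is to verify by direct computation that $\Theta_{*}\hat\nu$ coincides with the measure $\nu$ furnished by Theorem~\ref{th.measuretower} applied to the tower $T:\Delta\to\Delta$ associated with the Gibbs-Markov map $F$ and the recurrence time $R$. Once this identity is in place, invariance, absolute continuity, ergodicity, and uniqueness all follow automatically from Theorem~\ref{th.measuretower}.

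The first step is to check that $\Theta$ is actually a factor map in the useful direction $\Theta\circ\widehat T=T\circ\Theta$ (the relation~\eqref{eq.leozinho} after the natural identifications~\eqref{eq.hatell}). This is a short case split: when $\ell<R(x)-1$, both sides send $(x,\ell)$ to $(\Theta_{\gamma_0}(x),\ell+1)$; when $\ell=R(x)-1$, both sides send $(x,\ell)$ to $(\Theta_{\gamma_0}(f^R(x)),0)=(F(\Theta_{\gamma_0}(x)),0)$ by~\eqref{eq.semifRF}. Iterating, $\Theta\circ\widehat T^{j}=T^{j}\circ\Theta$ for all $j\geq 0$. The second step is to observe that, by~\eqref{eq.Rcompatible}, the recurrence time $R$ depends only on the stable leaf, so the set $\{R>j\}$ is a union of stable leaves; equivalently, $\Theta_{\gamma_0}^{-1}\bigl(\{R>j\}\cap\gamma_0\bigr)=\{R>j\}\cap\Lambda$. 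Combining this with Proposition~\ref{le.SRBproj}, we obtain
\begin{equation*}
\Theta_{*}\bigl(\mu_0|\{R>j\}\bigr)=(\Theta_{\gamma_0})_{*}\mu_0\big|\{R>j\}=\nu_0|\{R>j\}
\qquad\text{for every } j\geq 0.
\end{equation*}

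Feeding these two ingredients into the explicit formula for $\hat\nu$ supplied by Theorem~\ref{th.SRBtower} and writing $\rho=\sum_{j\geq 0}\mu_0(\{R>j\})=\sum_{j\geq 0}\nu_0(\{R>j\})$ (equality by the stable-leaf saturation above), we get
\begin{equation*}
\Theta_{*}\hat\nu
=\frac{1}{\rho}\sum_{j=0}^{\infty}\Theta_{*}\widehat T^{\,j}_{*}\bigl(\mu_0|\{R>j\}\bigr)
=\frac{1}{\rho}\sum_{j=0}^{\infty}T^{j}_{*}\bigl(\nu_0|\{R>j\}\bigr),
\end{equation*}
which is precisely the measure $\nu$ constructed in Theorem~\ref{th.measuretower} from the unique $F$-invariant absolutely continuous probability $\nu_0$. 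Since that theorem asserts that $\nu$ is the unique $T$-invariant probability on $\Delta$ absolutely continuous with respect to $\leb_{\gamma_0}$ and that it is ergodic, the statement follows at once.

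The only mildly delicate point is bookkeeping: one must confirm that the identifications between $\Delta_{\ell}$ and the subset of $\hat\Delta_{\ell}$ sitting above $\gamma_0\cap\Lambda$ are compatible with the respective tower dynamics (which is exactly the content of~\eqref{eq.hatell}), and that interchanging $\Theta_{*}$ with the countable sum is legitimate. Both are automatic here because the sum is a countable sum of non-negative finite measures and $\Theta_{*}$ acts pointwise on measurable sets. No further delicate argument, such as invoking natural extensions or the Buzzi entropy formula for countable-to-one extensions, is needed at this stage.
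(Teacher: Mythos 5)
Your proof is correct, and it is essentially the natural argument: identify $\Theta_{*}\hat\nu$ term by term with the explicit expression for the unique absolutely continuous $T$-invariant measure $\nu$ supplied by Theorem~\ref{th.measuretower}, using Proposition~\ref{le.SRBproj} to identify $(\Theta_{\gamma_0})_*\mu_0=\nu_0$, the fact that $R$ is constant on stable leaves to move the restriction $|\{R>j\}$ past $\Theta$, and the semiconjugacy to move $\Theta_*$ past $\widehat T^{\,j}_*$. The paper delegates the proof to \cite[Proposition 4.13]{A20}, so a line-by-line comparison is not possible, but there is nothing more to the statement than what you have written. One small editorial point worth flagging: the displayed relation~\eqref{eq.leozinho} in the text reads $\widehat T\circ\Theta=\Theta\circ T$, which does not type-check as a semiconjugacy from $(\hat\Delta,\widehat T)$ to $(\Delta,T)$; the correct intertwining, which you use and verify by the two-case computation, is $T\circ\Theta=\Theta\circ\widehat T$.
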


Hence, the measure preserving systems $(f, \mu)$ and $(T,{\nu})$ are both factors of $(\widehat{T}, \hat{\nu})$, and we have  the  commuting diagram
\begin{equation*}
  \begin{CD}
    (M, \mu) @<\pi<< (\hat{\Delta},\hat{\nu}) @>\Theta>> (\Delta, \nu) \\
    @VV f V @V V \widehat{T} V @V   VTV \\
    (M, \mu) @<\pi<< (\hat{\Delta},\hat{\nu}) @>\Theta>> (\Delta, \nu)
  \end{CD}  
\end{equation*}

\section{Natural extensions}

Now, we turn our attention to the tower systems $(T, {\nu})$ and $(\widehat{T}, \hat{\nu})$. Heuristically, it is natural to expect these two systems have the same entropy, since we are in a certain sense just ignoring the stable direction, where no dynamical information is produced. The formal way in which we will deduce this fact is via   natural   extensions.  Let us briefly recall this concept, in a general setting. 
Let $\phi:X\to X$ be a measure preserving transformation of a probability measure space $(X,\mathcal A,\eta)$.
Set
 $$
 X^\#= \left\{(x_1,x_2,\dots)\in \prod_{i=1}^\infty X\colon \phi(x_{i+1})=x_i\right\}
 $$
 and   the map $\phi^\#: X^\#\to X^\#$, given  by 
\begin{equation*}
\phi^\#(x_1,x_2,\dots)=(\phi(x_1),x_1,x_2,\dots).
\end{equation*}
Consider the $\sigma$-algebra $\mathcal A^\#$ in $X^\#$ generated by  cylinders of the form 
$$
[A_1, \dots, A_k]=\left\{(x_1,x_2,\dots) \in X^{\#} \colon \,x_i \in A_i, \mbox{ for all } 1\le i\le k\right \}, 
$$
where $A_i \in \hat{\mathcal{A}}$, for all $1\le i\le k$. It is easily verified that $\phi^\#$ preserves the probability measure $\eta^\#$     defined in    the cylinders by
 $$\eta^{\#}([A_1,\dots,A_k])=\eta\left(A_k \cap \phi^{-1}(A_{k-1}) \cap \dots \cap \phi^{-k+1}(A_0)\right).$$
 Moreover, the map $\pi^\#:X^\#\to X$, given by $\pi^\#(x_1,x_2,\dots)=x_1$, is a semiconjugacy between $\phi^\# $ and $\phi$ and $\pi^\#_*\eta^\#=\eta$. 
The measure preserving system $(\phi^\#,\eta^\#)$ is called the \emph{natural extension} of $(\phi,\eta)$.
A classical result due to Rohlin gives that the entropies of these two measure preserving systems coincide, i.e.
\begin{equation}\label{le.nat1}
h_{\eta^\#}(\phi^\#)=h_{\eta}(\phi).
\end{equation}
 see  \cite[Section 3.3]{R64} or \cite[Section 9.9]{R67}.
For the natural extensions of the tower systems $(\widehat T,\hat\nu) $ and $(T,\nu)$, it is proved 
  in  \cite[Appendix B]{DWY12} that the
  transformation  $\Theta^\#:\hat\Delta^\#\to\Delta^\#$, given by
$$\Theta^\#((x_1,\ell_1),(x_2,\ell_2),\dots)=(\Theta(x_1,\ell_1),\Theta(x_2,\ell_2),\dots)
$$
 is an isomorphism of the measure preserving systems $(\widehat{T}^\#,\hat\nu^\#)$ and $(T^\#, \nu^\#)$.
  This in particular  implies that
  \begin{equation}\label{eq.toreex}
h_{\hat{\nu}^{\#}}(\widehat{T}^{\#})=h_{ {\nu}^{\#}}( {T}^{\#}).
\end{equation}

\section{Entropy of the original system}
In this section, we complete  the proof of Theorem~\ref{entropy formula}.
Since    $(\widehat{T},\hat \nu)$ is an extension of $(f,\mu)$ with countably many fibers, it follows from    \cite[Proposition 2.8]{B99b} that
\begin{equation} \label{eq.buzzi28}
h_\mu(f)= h_{\hat\nu}(\widehat{T}).
\end{equation}
Moreover, by~\eqref{le.nat1} and~\eqref{eq.toreex},
\begin{equation}\label{eq.entros}
h_{\hat{\nu}}(\widehat{T})=h_{\hat{\nu}^{\#}}(\widehat{T}^{\#})=h_{ {\nu}^{\#}}( {T}^{\#})=h_{{\nu}}({T}).
\end{equation}
Using  \eqref{eq.final2},~\eqref{eq.buzzi28} and \eqref{eq.entros} we get
\begin{equation}\label{eq.final22} 
h_\mu(f)<\infty \iff \int_{\Delta_0} R\log J_{F}dm_{\gamma_0}<\infty.
\end{equation}
On the other hand, using~\eqref{eq.jacompoe} and~\ref{Y5}, we obtain
\begin{equation}\label{eq.final23} 
\int_{\Delta_0} R\log J_{F}dm_{\gamma_0}<\infty
 \iff
 \int_{\Delta_0} R\log  |\det Df^{R}\vert_{\gamma_0}| dm_{\gamma_0}<\infty.
\end{equation}
The second item of Theorem~\ref{entropy formula} is then a consequence of~\eqref{eq.final22}  and~\eqref{eq.final23}.
Now, we prove  the first item of Theorem~\ref{entropy formula}. Assuming  $h_\mu(f)<\infty$, it follows from~\eqref{eq.final2}, \eqref{eq.buzzi28}, \eqref{eq.entros} and
   Proposition \ref{entrinduce} that  
$$
h_\mu(f)= \int_\Delta  \log J_T  d{\nu},
$$
and from Lemma~\ref{davids}, 
$$\displaystyle\int_\Delta  \log J_T \, d{\nu} =\frac1\rho   \int_{\gamma_0\cap\Lambda}   \log J_F \,  d \nu_0,$$
with $\rho$ as in~\eqref{eq.rho}. 
By the last two displayed formulas, the proof of Theorem \ref{entropy formula} will be complete  as soon as we get
\begin{equation}\label{eq.compara}
\displaystyle \int_{\gamma_0\cap\Lambda}   \log J_F \,  d \nu_0=\rho  \int_M \log |\det Df_u|\,d\mu.
\end{equation}
This will be  obtained in the next two lemmas. 
Recalling~\eqref{eq.rho},~\eqref{eq.Rcompatible} and  Proposition~\ref{le.SRBproj}, we have in this case
\begin{equation}\label{newro}
\rho 
=  \sum_{j\geq 0}(\Theta_{\gamma_0)_*}\mu_0(\{ R\vert_{\gamma_0\cap\Lambda}> j\})
= \sum_{j\geq 0} \mu_0(\{ R  > j\}).
\end{equation}

\begin{lemma} \label{le.last1}
$\displaystyle\int_{\gamma_0\cap\Lambda}   \log J_F \,d{\nu_0} = \int_{\Lambda}  \log  |\det Df^R_u|  \,d  \mu_0.$
\end{lemma}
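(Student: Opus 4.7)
The plan is to reduce the identity to a coboundary for the return map $f^R$ acting on $(\Lambda,\mu_0)$. By Proposition~\ref{le.SRBproj} we have $\nu_0=(\Theta_{\gamma_0})_*\mu_0$, so the left-hand side immediately rewrites as
$$
\int_{\gamma_0\cap\Lambda}\log J_F\, d\nu_0 \;=\;\int_\Lambda \log\bigl(J_F\circ\Theta_{\gamma_0}\bigr)\, d\mu_0.
$$
The goal is therefore to prove that $\log(J_F\circ\Theta_{\gamma_0})-\log|\det Df^R_u|$ is a coboundary for $(f^R,\mu_0)$, i.e.\ of the form $\phi\circ f^R-\phi$ with $\phi$ bounded, which makes its $\mu_0$-integral vanish.

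To build $\phi$, I would apply formula~\eqref{eq.jacompoe} to expand, for $y\in\Lambda_i$ and $z=\Theta_{\gamma_0}(y)\in\gamma_0\cap\Lambda_i$,
$$
\log J_F(z)=\log\rho_{\gamma_1,\gamma_0}\!\bigl(f^R(z)\bigr)+\log\bigl|\det Df^R(z)|_{T_z\gamma_0}\bigr|
=\log\rho_{\gamma_1,\gamma_0}\!\bigl(f^R(z)\bigr)+\log|\det Df^R_u(z)|,
$$
where the last equality uses that $T_z\gamma_0$ is the unstable subspace at the point $z\in\gamma_0$. Then I would define the candidate coboundary function
$$
\phi(y)\;=\;\sum_{k=0}^{\infty}\log\frac{|\det Df_u(f^k(y))|}{|\det Df_u(f^k(\Theta_{\gamma_0}(y)))|}.
$$
This series converges uniformly: the points $f^k(y)$ and $f^k(\Theta_{\gamma_0}(y))$ lie on a common stable disk, hence their distance decays as $C\beta^k$ by~\ref{Y2}, and $\log|\det Df_u|$ is H\"older continuous (indeed $\eta$-H\"older along Pesin unstable manifolds); so $\phi$ is bounded and the integrals make sense.

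Step three is the telescoping computation. Using \ref{Y1} to get $\Theta_{\gamma_0}(f^R(y))=F(\Theta_{\gamma_0}(y))=F(z)$ and splitting the series for $\phi(f^R(y))$ into the first $R$ terms (which combine via the chain rule to $\log|\det Df^R_u|$) plus a tail, together with the classical infinite product representation of the stable-holonomy density
$$
\log\rho_{\gamma_1,\gamma_0}\!\bigl(f^R(z)\bigr)\;=\;\sum_{k=0}^{\infty}\log\frac{|\det Df_u(f^k(f^R(z)))|}{|\det Df_u(f^k(F(z)))|},
$$
a straightforward reindexing would yield
$$
\phi\circ f^R(y)-\phi(y)\;=\;\log J_F(\Theta_{\gamma_0}(y))-\log|\det Df^R_u(y)|.
$$
Integrating against the $f^R$-invariant measure $\mu_0$ kills the left-hand side, so the two integrals on the right coincide, finishing the proof.

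The main obstacle I expect is justifying the infinite product formula for $\rho_{\gamma_1,\gamma_0}$ in the present piecewise setting: this is the standard identification of the stable holonomy Jacobian with the \emph{$u$-Jacobian cocycle quotient}, but it must be checked that condition \ref{Y5} together with \ref{Y2}-\ref{Y4} really produces this formula, and that the resulting $\phi$ is bounded. Convergence of the series that defines $\phi$ and the telescoping manipulation are the only delicate estimates; everything else is bookkeeping with the chain rule and invariance of $\mu_0$.
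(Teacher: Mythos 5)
Your plan identifies the same structural mechanism the paper uses: the function $\log\bigl(J_F\circ\Theta_{\gamma_0}\bigr)-\log|\det Df^R_u|$ is an $f^R$-coboundary $\phi\circ f^R-\phi$ on $(\Lambda,\mu_0)$ with $\phi$ coming from the stable holonomy, and invariance of $\mu_0$ under $f^R$ kills the coboundary. The divergence is in how $\phi$ is produced and how the coboundary identity is verified. You construct $\phi$ as the explicit infinite series $\sum_{k\ge0}\log\bigl(|\det Df_u|(f^k(y))/|\det Df_u|(f^k(\Theta_{\gamma_0}(y)))\bigr)$ and propose to telescope, using for the $\rho_{\gamma_1,\gamma_0}$ term the infinite-product representation of the stable-holonomy density. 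The paper instead sets $\phi(x)=\log J_{\Theta_{\gamma^u(x),\gamma_0}}(x)$ directly, where the holonomy Jacobian exists by~\ref{Y5}, and obtains the coboundary identity in a single application of the chain rule to the conjugacy relation $F\circ\Theta_{\gamma_0}=\Theta_{\gamma_0}\circ f^R$ of~\eqref{eq.semifRF}:
\begin{equation*}
\log J_F\bigl(\Theta_{\gamma^u(x),\gamma_0}(x)\bigr)+\log J_{\Theta_{\gamma^u(x),\gamma_0}}(x)
=\log J_{\Theta_{\gamma^u(f^R(x)),\gamma_0}}\bigl(f^R(x)\bigr)+\log|\det Df^R_u(x)|.
\end{equation*}
Boundedness of $\phi$, hence integrability, then follows at once from the two-sided bound on $\rho_{\gamma,\gamma'}$ in~\ref{Y5}; no convergence estimate along stable disks or H\"older modulus of $\log|\det Df_u|$ is needed.

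The obstacle you flag is a genuine gap in your route at the level of generality of the lemma. Axiom~\ref{Y5} only asserts that $\rho_{\gamma,\gamma'}$ exists, is uniformly bounded above and below, and is H\"older in the separation metric; it does \emph{not} assert that $\rho_{\gamma,\gamma'}$ equals the $u$-Jacobian cocycle quotient, and this identity is precisely what your telescoping uses in an essential way. The infinite-product formula does hold for the Pesin stable holonomy in the concrete constructions that produce Young sets, but deriving it from~\ref{Y2}--\ref{Y5} would be a separate nontrivial lemma, and in the piecewise/singular setting one also has to worry about whether $\log|\det Df_u|$ has enough regularity uniformly over the manifold (an issue, e.g., for billiards, where $Df$ is unbounded near the singular set). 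The paper's chain-rule argument never needs to know what the holonomy density actually is, which is exactly why it sidesteps all of this. So your idea is the right one, but the realization through the infinite product is longer, imports unneeded hypotheses, and as written leaves the key step unproved.
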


\begin{proof}
Using~\eqref{eq.semifRF}, we easily see that, for every $x\in\Lambda$, 
$$
F (\Theta_{\gamma^u(x),\gamma_0}(x)) = \Theta_{\gamma^u(f^R(x)),\gamma_0}( f^R(x)) .
$$
It follows from \ref{Y5} that, for all $\gamma,\gamma'\in\Gamma^u$, the transformation $\Theta_{\gamma,\gamma'}$ has a Jacobian with respect to the measures $m_\gamma$  and $m_{\gamma'}$ on $\gamma$ and $\gamma'$, respectively. Using the Chain Rule  and applying logarithms, we have  for   each $x\in\Lambda$
\begin{equation}\label{eq.jurga0}
\log J_F(\Theta_{\gamma^u(x),\gamma_0}(x))+\log J_{\Theta_{\gamma^u(x),\gamma_0}}(x)
=
\log J_{\Theta_{\gamma^u(f^R(x)),\gamma_0}}(f^R(x))+ \log  |\det Df^R_u(x)|.
\end{equation}
Now, observing that  $\Theta_{\gamma^u(x),\gamma_0}(x)=\Theta_{\gamma_0}(x)$ and $(\Theta_{\gamma_0})_*\mu_0=\nu_0$ (recall Proposition~\ref{le.SRBproj}), we have
\begin{equation}\label{eq.jurga1}
\int_\Lambda \log J_F(\Theta_{\gamma^u(x),\gamma_0}(x)) d\mu_0(x)=\int_\Lambda \log J_F(\Theta_{\gamma_0}(x)) d\mu_0(x)
=\int_{\gamma_0\cap\Lambda} \log J_F(x) d\nu_0(x).
\end{equation}
Since the measure $\mu_0$ is $f^R$-invariant, we also have
\begin{equation}\label{eq.jurga3}
\int_\Lambda\log J_{\Theta_{\gamma^u(x),\gamma_0}}(x)d\mu_0(x)
=
\int_\Lambda \log J_{\Theta_{\gamma^u(f^R(x)),\gamma_0}}(f^R(x))d\mu_0(x).
\end{equation}
From~\eqref{eq.jurga0}, \eqref{eq.jurga1} and \eqref{eq.jurga3} we get the conclusion.
\end{proof}

\begin{lemma}\label{le.igual}
$\displaystyle\int_\Lambda\log  |\det Df^R_u|\,d \mu_0=\rho   \int_M\log |\det Df_u|\,d\mu.$

\end{lemma}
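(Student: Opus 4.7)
The strategy is to unfold $\log|\det Df^R_u|$ along an orbit by the chain rule and then recognise the resulting sum as $\rho$ times the integral of $\log|\det Df_u|$ against $\mu$ via the defining formula for $\mu$ in Theorem~\ref{th.SRB}.

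First, recall that the Oseledets unstable subspace is $Df$-invariant, i.e.\ $Df(E^u_x)=E^u_{f(x)}$ for $\mu_0$-almost every $x$. Consequently, the chain rule along the unstable direction gives, for $\mu_0$-almost every $x\in\Lambda$ with $R(x)=n$,
\begin{equation*}
\log|\det Df^R_u(x)|=\sum_{j=0}^{n-1}\log|\det Df_u(f^j(x))|.
\end{equation*}
Integrating this identity against $\mu_0$ and using the decomposition of $\Lambda$ according to the level sets $\{R=n\}$, I would interchange the sums to obtain
\begin{equation*}
\int_{\Lambda}\log|\det Df^R_u|\,d\mu_0
=\sum_{j=0}^{\infty}\int_{\{R>j\}}\log|\det Df_u|\circ f^j\,d\mu_0,
\end{equation*}
which, after pushing each integrand forward by $f^j$, equals
\begin{equation*}
\sum_{j=0}^{\infty}\int_M \log|\det Df_u|\,d\bigl(f^j_*(\mu_0|\{R>j\})\bigr).
\end{equation*}
By the formula for $\mu$ in the second item of Theorem~\ref{th.SRB} and the expression \eqref{newro} for $\rho$, this last sum is precisely $\rho\int_M\log|\det Df_u|\,d\mu$.

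The step that requires real care is the interchange of summation and integration. I would handle it by splitting $\log|\det Df_u|$ into its positive and negative parts. For the positive part, Tonelli's theorem applies unconditionally. For the negative part, one uses an argument parallel to Corollaries~\ref{co.integral0} and~\ref{co.integral}: combining~\eqref{eq.jacompoe}, \ref{Y5} and Lemma~\ref{le.lower} shows that $\log|\det Df^R_u|^-$ is integrable with respect to $\mu_0$ (or, in the bad case, that both sides of the identity are $+\infty$), which legitimises the exchange and shows the integral on the left is well defined in the sense of~\eqref{star2}. Once the swap is justified, the identity follows directly from the computation above, completing the proof of~\eqref{eq.compara} and hence of the first item of Theorem~\ref{entropy formula}.
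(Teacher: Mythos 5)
Your proposal is correct and follows essentially the same route as the paper's proof: unfold $\log|\det Df^R_u|$ via the chain rule along the unstable direction, decompose $\Lambda$ by the level sets $\{R=n\}$, reindex the resulting double sum, and match it to the defining formula for $\mu$ in Theorem~\ref{th.SRB} together with the expression~\eqref{newro} for $\rho$. The only difference is that you spell out the Tonelli/positive--negative-part justification for exchanging the sums and integrals, which the paper performs implicitly without comment; that added care is entirely consistent with the conventions set up around~\eqref{star2}, Corollary~\ref{co.integral0}, and~\eqref{eq.jacompoe}.
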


\begin{proof} Set for each $n\ge 1$
 $$P_n=\{x\in\Lambda\colon R(x)=n\}.$$
 By the chain rule, we have for each $x\in P_n$,
 $$ \det Df^R_u(x)=\det Df_u(f^{n-1}(x))\cdots \det Df_u(f(x))\cdot \det Df_u(x).$$
It follows that
 \begin{eqnarray*}
 \int_\Lambda\log  |\det Df^R_u| d\mu_0 &= &\sum_{n=1}^\infty\int_{P_n}\log |\det Df^R_u| \,d\mu_0\\
 &= &\sum_{n=1}^\infty\int_{M}\log   |\det Df^R_u| \,d(\mu_0\vert P_n)\\
  &=&
 \sum_{n=1}^\infty\sum_{j=0}^{n-1}\int_M\log  |\det Df_u|\circ f^j\,d(\mu_0\vert P_n)\\
  &=&
 \sum_{n=1}^\infty\sum_{j=0}^{n-1}\int_M\log  |\det Df_u|\,d\left(f^{j}_*(\mu_0\vert P_{n})\right)\\
  &=&
 \sum_{n=0}^\infty\int_M\log   |\det Df_u|\,d\left(f^{n}_*(\mu_0\vert \{R>n\})\right)\\
  &=&
  \int_M\log   |\det Df_u|\,d\left(\sum_{n=0}^\infty f^{n}_*(\mu_0\vert
\{R>n\})\right).
 \end{eqnarray*}
By Theorem~\ref{th.SRB},
$$
\int_M\log   |\det Df_u|\,d\left(\sum_{n=0}^\infty f^{n}_*(\mu_0\vert \{R>n\})\right)= {\sum_{j= 0}^\infty\mu_0(\{  R > j\})}   \int_M\log  |\det Df_u| d\mu.
$$
Using~\eqref{newro}, we finish  the proof.
 \end{proof}

\section{Infinite entropy}\label{se.systemno2}

Here, we adapt the example in Section~\ref{se.systemno} to build a piecewise $C^\infty$ diffeomorphism  with an SRB measure given by  a Young set  for which the entropy formula does not hold. Consider the following objects, as described in Section~\ref{se.systemno}:
\begin{enumerate}
\item 
the interval  $\Delta_0$ with finite Lebesgue measure $m$;
\item 
 the $m$ mod 0 partition $\mathcal P=\{\omega_n\}_{n\ge 2}$ of $\Delta_0$;
\item 
the  piecewise $C^\infty$  Gibbs-Markov map $F:\Delta_0\to\Delta_0$ with   associated partition $\mathcal P$;
\item 
the    function $R:\Delta_0 \to \mathbb N$ such that $R\vert_{\omega_n}=n$, for all $n\ge 2$. 
\end{enumerate}
We are going to introduce an extra contracting direction and slightly modify the  standard tower construction  associated with these objects   in order to obtain a piecewise $C^\infty$  diffeomorphism  $f: M\to M$ of   a  rectangle $M$ in $\mathbb R^2$. In addition, we will show that  $f$ has an ergodic SRB measure $\mu$ given by a Young set   such that
\begin{equation}\label{notentro2}
h_\mu(f)=\infty\qand \int_M  \log |\det Df_u | \, d{\mu}<\infty.
\end{equation}
Set
 $$ \Delta=\big\{ (x,\ell)\colon \text{$x
\in \Delta_0$ and $0\leq \ell < R(x)$} \big\}$$ and $$M=\Delta \times [0,1].$$ 
As observed   in   Section~\ref{se.systemno},  the set $\Delta$ can   be identified with an interval in $\mathbb R$, and so $M$ can be  identified with a two-dimensional rectangle.
Taking some $ \lambda\in(0, 1/2]$,  consider  $f:  M\to M$   given  by
 $$
f(x,\ell, y)= 
\begin{cases}
    \left(x,\ell+1, \displaystyle \lambda y\right), & \hbox{if $
\ell<R(x)-1$;} \\
    (F(x),0,\lambda +\lambda^2+\cdots+\lambda^{R(x)-1} +\lambda^{R(x)}y ), & \hbox{if $\ell=R(x)-1$.} \\
\end{cases}%
 $$
For each $n\ge 2$ and $\ell\ge 0$, consider  the open set
 $$M_{n,\ell} =\omega_n\times\{\ell\}\times (0,1).$$
It is easily verified that  these sets are pairwise disjoint and $\cup_{n,\ell} \closure{M}_{n,\ell}=M$. Let us now show that   $f\vert_{\cup_{n,\ell} M_{n,\ell}}$ is a  $C^{\infty}$ diffeomorphism onto its image. 
Recalling that $F$ is affine   on each $\omega_n=\{x\in\Delta_0\colon R(x)=n\}$, it is enough to prove that $f$ is injective. For this, we just need to show that, given  $x_n\in \omega_n$, $x_k\in\omega_k$ with $n\neq k$ and $y,z\in(0,1)$, we have
 $$
 f(x_n,n-1,y)\neq f(x_k,k-1,z).
 $$
Assume for definiteness $n>k$. Since
$$
 f(x_n,n-1,y)=(F(x_n),0,\lambda +\lambda^2+\cdots+\lambda^{n-1} +\lambda^{n}y )
 $$
and
$$
 f(x_n,k-1,z)=(F(x_k),0,\lambda +\lambda^2+\cdots+\lambda^{k-1} +\lambda^{k}z )
 $$
we have that the difference of the third coordinates of these two images is  
$$
\lambda^k+\cdots +\lambda^{n-1}+\lambda^n y-\lambda^k z > \lambda^ny>0.
$$
This implies that  $f$ is injective. 
Now,  
taking
$$\Lambda=\Delta_0\times\{0\}\times [0,1]\subset M
 $$
 we have that $\Lambda$ has a product structure given by the continuous  families of disks
 $$
 \Gamma^s=\left\{\{x\}\times \{0\}\times [0,1]: x\in \Delta_0\right\}
 \qand \Gamma^u= \{\Delta_0\times \{0\}\times \{y\}\colon y\in [0,1]\}.
 $$
 Set for all $i\ge 2$
 $$\Lambda_i=\omega_i\times\{0\}\times [0,1]\qand R_i=i.  
 $$
It is easily verified  that $(\Lambda_i)_{i\ge2}$ is a family of pairwise disjoint $s$-subsets of $\Lambda$ and  the return map $f^R$ associated with these objects  as in
 \eqref{defRfR} satisfies  conditions   \ref{Y1}-\ref{Y5} in Section~\ref{section.GMY}, thus showing that $\Lambda$ is a Young set with integrable recurrence times. Let~$\mu$ be the SRB measure for $f$ given by Theorem~\ref{th.SRB}.   Note that the quotient map associated with $f^R$ on the disk $\gamma_0=\Delta_0\times\{0\}\times\{0\}$  is   the map $F$, under the natural identification of~$\gamma_0$ and $\Delta_0$. Moreover,  $\rho_{\gamma_1,\gamma_0}=1$, for all $\gamma_1\in\Gamma^u$. It follows from~\eqref{tres}, \eqref{eq.jacompoe} and Lemma~\ref{le.sums} 
that   $$\int_{\gamma_0\cap\Lambda} R\,\log |\det D{f^R}\vert_{\gamma_0}| \,dm_{\gamma_0}=\int_{\Delta_0} R\,\log J_F \,dm_{\gamma_0}=\infty .$$
Using  Theorem~\ref{entropy formula}, we obtain
 $h_\mu(f)=\infty$. On the other hand,   it follows from \eqref{dois}, \eqref{eq.compara}  and Lemma~\ref{le.sums}  that
 $$
\int_M  \log |\det Df_u | \, d{\mu}=\frac1\rho \int_{\Delta_0}\log J_F dm<\infty.
 $$
This concludes the proof of~\eqref{notentro2}.

\section{Applications}\label{se.app2}

In the next subsections we apply Theorem~\ref{entropy formula} and Corollary~\ref{co.entform1} to some classes of piecewise smooth diffeomorphisms with SRB measures given by Young sets. The case of billiard maps has already been considered  in \cite{LS82} through the approach in \cite{KSL86}.
\subsection{Piecewise hyperbolic maps}
Here, we apply our results to  a class of piecewise hyperbolic diffeomorphisms    studied   by Young  in \cite{Y98}, in  dimension two,  and   by Chernov  
in \cite{C99b}, in any finite dimension. Concrete examples of such systems  include  the family of Lozi-like mappings   in \cite{Y85a}; see also \cite{CL84,M80,R83b}.
Let $M$ be a compact $d$-dimensional Riemannian manifold, for some  $d\ge 2$, possibly with boundary. Assume that  $f:M\rightarrow M$  satisfies the following conditions:

\begin{enumerate}
\item  $f$ is a \emph{piecewise $C^2$ diffeomorphism} from $M$ into itself: there is a finite number of pairwise disjoint open regions $ (M_i)_i$, with $M=\cup_i \closure{M}_i$, whose boundaries are $d-1$ submanifolds  such that  
\begin{enumerate} 
\item $f\vert_{(\cup_i M_i)}$ is injective;
\item $f\vert_{M_i}$ can be extended to a $C^2$ diffeomorphism of $\closure{M}_i$ onto its image, for all $i$.
\end{enumerate}
\item $f$ is \emph{uniformly hyperbolic}: there exist $Df$-invariant cone families $ \mathcal C^u$ and $ \mathcal C^s$ on $M$ and $\lambda>1$ such that, for all $i$ and $x\in \closure M_i$
\begin{enumerate}
\item $|Df_x(v)|\geq \lambda |v|$, for all $v\in \mathcal C_x^u$;
\item $|Df^{-1}_x(v)|\geq \lambda |v|$, for all $v\in \mathcal C_x^s$. 
\end{enumerate}
\end{enumerate}
We shall refer to $\mathcal{S}=M\setminus \cup_i M_i$ as the \textit{singularity set}. Note that  
we allow $
\cup_i f(\closure{M}_i) \subsetneq M
$,
 so that $M$ can be a trapping region for an attractor. For $n\ge 1$, denote by
 $$\mathcal S_n=\mathcal S \cup f^{-1}(\mathcal S)\cup\cdots\cup f^{-n+1}(\mathcal S)$$
 the singularity set for $f^n$. 


\begin{enumerate}
\setcounter{enumi}{2}

\item The angle between  $\mathcal{S}$ and $\mathcal C^u$ is bounded away from 0.
\item There is $n\ge 1$ such that the multiplicity of any point in $\mathcal S^{n}$ is smaller  that $\lambda^n-1$. 
\end{enumerate}
The results in~\cite{C99b,Y98} show  that $f$ has an ergodic SRB measure $\mu$ given by a Young set with the tail  of recurrence times decaying exponentially fast. In particular, the square of the recurrence time function is integrable with respect to Lebesgue measure on any unstable leaf in the family that defines the  Young set.   Since $| \det Df_u|$ is bounded, it follows from
Corollary~\ref{co.entform1} that 
$$
h_\mu(f)=\int_M   \log |\det Df_u| \,  d\mu<\infty.
$$


\subsection{Billiard maps}

\addtocontents{toc}{\protect\setcounter{tocdepth}{3}}

Here, we apply the main  results of this part to some classes of  billiard maps. Maps of this type have  been        introduced by Sinai in~\cite{S70a}, and  can be described as follows. Let  $\Gamma_1,\dots,\Gamma_d$ be  pairwise disjoint simply connected $C^3$ curves in the torus $\mathbb T^2$, which can be interpreted as the boundaries of scatters. 
Consider the billiard flow 
on the domain $\mathbb T^2\setminus \cup_{i}\inte(\Gamma_i)$, where each $\inte(\Gamma_i)$ stands for the interior of the curve~$\Gamma_i$,
generated by  the motion of  point particles  
 traveling at  unit speed  and having  elastic reflections at   the boundary~$\cup_i \Gamma_i$. This flow has the  Poincaré section $$M=\cup_i \Gamma_i\times [-\pi/2,\pi/2],$$  the first coordinate giving  the collision point in $\cup_i \Gamma_i$ and the second one   the angle of the trajectory with the normal to $\cup_i \Gamma_i$ at the collision point.
We are interested in the first return map   $f:M\to M$, known as the \emph{billiard map}. This is essentially a piecewise hyperbolic diffeomorphism as in Subsection~\ref{se.app2}, with the   difference     that $ Df $ is not bounded, due to the   tangencial reflections corresponding to angles $ \pm\pi/2$. Considering  $  \cup_i \Gamma_i$ parametrised by arc length    $x$   and  $\theta \in  [-\pi/2,\pi/2]$,
it is   known that $f$ preserves an ergodic  measure $\mu$ given by 
\begin{equation}\label{eq.srb}
d\mu=k\cos\theta dxd\theta, 
\end{equation}
 where $k>0$ is a normalizing constant; see e.g. 
\cite[Section 2.12]{CM06}. In the next two subsections, we use  Theorem~\ref{entropy formula} to deduce the entropy formula for $\mu$ in two special cases of billiards. Due to the unboundedness of $Df$, we cannot apply Corollary~\ref{co.entform1} in this context.

 \subsubsection{Dispersing billiards}
 

Assume that the curves $\Gamma_1,\dots,\Gamma_d$  have   strictly positive curvature. In this case, 
Young  proved in  \cite{Y98} that the measure $\mu$ is given by a Young set  with exponential tail of recurrence times if the  billiard has  \emph{finite horizon}, i.e. when the time between collisions is uniformly bounded.  This conclusion was  extended by Chernov to billiards with  infinite horizon in~\cite{C99}.
In order to apply Theorem~\ref{entropy formula}, we need to ensure that the entropy $h_\mu(f)$ is finite. This follows  easily from \cite[Lemma 3.6]{BD20} in the finite horizon case. In general, this may be deduced  from  \cite[Corollary 2.4 \& Example 3.1]{C91}.
Therefore, using Theorem~\ref{entropy formula}, we get
 $$
h_\mu(f)=\int_M   \log |\det Df_u| d\mu<\infty.
$$

\subsubsection{Semi-dispersing billiards}
Chernov and Zhang 
considered   a class of billiards  for which the the curvature of the curves $\Gamma_1,\dots,\Gamma_d$ vanishes at some points. As observed in~\cite{CZ05}, if there is no periodic trajectory that hits the boundary at flat points only, then a certain power of the collision map is uniformly hyperbolic. 
To avoid this situation, and for simplicity, they  assume that there is one such periodic trajectory of period two that runs between two flat points.
Moreover, the boundary near these flat points is given by 
 $$y=\pm (1+|x|^\alpha),\quad \alpha>2,$$
 in some rectangular coordinate system   $(x,y)\in \mathbb R^2$. As a byproduct of the results by Chernov and Zhang, we have   that  the measure $\mu$  is given by a Young set  with  tail of recurrence times decaying as
$\mathcal O((\log n)^{\beta+1}/n^{\beta+1})$, for  $\beta=(\alpha +2)/(\alpha-2)$; see conditions (F1)-(F2) in~\cite[Section 3]{CZ05} and the proof of \cite[Theorem 4]{CZ05a}. This implies that  the recurrence times are integrable for all $\alpha>2$. In order to apply Theorem~\ref{entropy formula}, we need to ensure that $h_\mu(f)<\infty$.  This follows from  \cite[Corollary 2.4]{C91} and  \cite[Corollary 1.3]{S89}. Therefore, using Theorem~\ref{entropy formula}, we get
 $$
h_\mu(f)=\int_M   \log |\det Df_u| d\mu<\infty.
$$

\addtocontents{toc}{\protect\setcounter{tocdepth}{0}}

\bibliographystyle{acm}


\end{document}